\newcommand{\dd}{\mathrm{d}}
\newcommand{\id}{\operatorname{id}}
\newcommand{\Ric}{\operatorname{Ric}}
\newcommand{\R}{\mathds R}
\newcommand{\C}{\mathds C}
\newcommand{\sff}{\mathrm{I\!I}}
\newcommand{\snsq}{\operatorname{sn}_\lambda^2}
\newcommand{\sn}{\operatorname{sn}_\lambda}
\newtheorem{theorem}{Theorem}[]
\newtheorem{lemma}[theorem]{Lemma}
\newtheorem{proposition}[theorem]{Proposition}
\newtheorem{corollary}[theorem]{Corollary}
\newtheorem*{mainthm}{Main Theorem}
\newtheorem*{theorem*}{Theorem}
\theoremstyle{definition}
\newtheorem{definition}[theorem]{Definition}
\theoremstyle{remark}
\newtheorem{claim}[theorem]{Claim}
\theoremstyle{remark}
\newtheorem{remark}[theorem]{Remark}
\title[Deformations of free boundary CMC hypersurfaces]{Deformations of free boundary CMC hypersurfaces}
\author[R. G. Bettiol]{Renato G. Bettiol}
\author[P. Piccione]{Paolo Piccione}
\author[B. Santoro]{Bianca Santoro}
\address{
\begin{tabular}{lll}
University of Pennsylvania & &Universidade de S\~ao Paulo \\
Department of Mathematics & & Departamento de Matem\'atica \\
209 South 33rd St & & Rua do Mat\~ao, 1010 \\
Philadelphia, PA, 19104-6395, USA & & S\~ao Paulo, SP, 05508-090, Brazil\\
\emph{E-mail address}: {\tt rbettiol@math.upenn.edu} & & \emph{E-mail address}: {\tt piccione@ime.usp.br}\\
\end{tabular}
\bigskip
\hfill\break\hfill\indent
\begin{tabular}{lll}
The City College of New York, CUNY&&\\
Mathematics Department&&\\
138th St and Convent Avenue, NAC 4/112B&&\\
New York, NY, 10031, USA&&\\
\emph{E-mail address}: {\tt bsantoro@ccny.cuny.edu} & &
\end{tabular}
}
\numberwithin{equation}{section}
\numberwithin{theorem}{section}
\thanks{The first named author was supported by the NSF grant DMS-1209387, USA. The second named author is partially supported by Fapesp and CNPq, Brazil. The third named author was partially supported by the NSF grant DMS-1007155 and PSC-CUNY grants, USA}
\subjclass[2010]{53C42, 53A10, 35R35, 58D19, 58E12}
\date{\today}
\begin{document}
\begin{abstract}
We study deformations of free boundary constant mean curvature (CMC) hypersurfaces whose Jacobi operator is degenerate due to symmetries of the ambient space. The value of the mean curvature and the ambient metric are allowed to vary simultaneously, provided that the infinitesimal ambient symmetries change smoothly. We discuss applications to free boundary CMC disks and Delaunay annuli in the unit ball of a space form.
\end{abstract}
\maketitle

\section{Introduction}
Hypersurfaces with constant mean curvature (CMC)
have critical area among those that enclose a fixed volume, and are often used to model interfaces between different media, such as soap films. Free boundary CMC hypersurfaces are particularly important in this regard, as they realize the natural physical constraint of allowing the interface to move freely about the boundary of the considered region.

There are many natural deformation questions regarding a given free boundary CMC hypersurface $\Sigma_0\subset M$ inside a manifold with boundary. For instance, consider the problems of deforming $\Sigma_0$ through {\em other free boundary} hypersurfaces in $M$ varying the value of the mean curvature, or {\em deforming the ambient metric} on $M$ while retaining the existence of a free boundary CMC hypersurface. Any answer to these deformation questions carries an intrinsic ambiguity due to ambient isometries in $M$, and should hence be considered up to such symmetries. In the absence of boundary conditions, these problems have been addressed in \cite{BetPicSic2014,BetPicSic2010}. The main result of the present paper is a simultaneous answer to both deformation questions above for \emph{free boundary} CMC hypersurfaces. The key assumption to ensure that these deformations are feasible is that $\Sigma_0$ is \emph{equivariantly nondegenerate}, meaning that the only Jacobi fields along $\Sigma_0$ originate from ambient symmetries (see Definition~\ref{def:equivnondeg}). Using this hypothesis, we have the following deformation result:

\begin{mainthm}
Let $\Sigma_0$ be an equivariantly nondegenerate free boundary CMC hypersurface of $(M,g_{\lambda_0})$ with mean curvature $h_0$. Assume that $g_\lambda$ is a smooth family of Riemannian metrics on $M$ whose Killing fields vary smoothly with $\lambda$.
Then, $\Sigma_0$ is the member $\Sigma_{(h_0,\lambda_0)}$ of a smooth family $\Sigma_{(h,\lambda)}$ of free boundary CMC hypersurfaces in $(M,g_\lambda)$ with mean curvature $h$, which is locally unique up to ambient isometries.
\end{mainthm}

Further technical details on the above statement are discussed in Theorem~\ref{thm:main}. The core of the proof is a convenient formulation of the prescribed mean curvature problem, that allows to combine the Implicit Function Theorem and an argument using the flux of Killing fields to obtain the desired equivariant deformation, see \eqref{eq:deftildeH}. This is inspired by an idea of Kapouleas~\cite{Kap1,Kap2}, see also \cite{BetPicSic2010}, which we adapt to the free boundary framework with varying ambient metrics.

In certain cases, up to using ambient isometries, the family $\Sigma_{(h,\lambda_0)}$ determines a foliation by free boundary CMC hypersurfaces near $\Sigma_{(h_0,\lambda_0)}$. Sufficient conditions for this are given in Propositions~\ref{prop:Lpsi=1} and \ref{prop:foliatedeqnondeg}. For instance, spherical caps are free boundary CMC hypersurfaces that clearly foliate the unit ball $B^{n+1}_\lambda$ in a space form $M^{n+1}_\lambda$, see Figure~\ref{fig:sphcaps}. This fact is reobtained as a consequence of our main result applied to a flat disk $D^n\subset B^{n+1}_0$, which is equivariantly nondegenerate.

A more complex situation arises by considering CMC annuli instead of disks, which can be seen as a deformation of the so-called \emph{critical catenoid}. This is the portion of a catenoid that meets the boundary of the unit ball $B^3_0$ orthogonally, and has been the object of several recent investigations by Fraser and Schoen~\cite{fraser-schoen2,fraser-schoen3,fraser-schoen1} and others \cite{devyver,smith-zhou,tran}. We establish the equivariant nondegeneracy of the critical catenoid (Lemma~\ref{lemma:catenoidnondegenerate}), allowing us to deform it through other free boundary CMC annuli in the unit ball of a space form. These surfaces can be recognized as compact portions of Delaunay surfaces, see Section~\ref{sec:delaunay}. In the particular case of fixing the Euclidean ambient metric and only varying the mean curvature, this family of Delaunay annuli containing the critical catenoid is formed by portions of unduloids and nodoids. These are rotationally symmetric surfaces that intersect the unit ball orthogonally and whose profile curve is the roulette of a conic section, that is, the path traced by one of the foci of a conic as it rolls without slipping along a straight line (see Figure~\ref{fig:roulette}). The critical catenoid corresponds to rolling a parabola, which is the limiting case of rolling ellipses or hyperbolas with eccentricity close to $1$, that respectively give rise to the aforementioned unduloids and nodoids.
For potential applications to other surfaces with more complicated topology, see Remark~\ref{rem:moretopology}.

This paper is organized as follows. The basic framework for the free boundary CMC problem is recalled in Section~\ref{sec:basic}, and preliminary computations regarding the flux of Killing fields are made in Section~\ref{sec:killingflux}. In Section~\ref{sec:normalhypersurfaces}, we discuss the structure of the set of hypersurfaces with fixed diffeomorphism type in a manifold with boundary $M$ that meet the boundary $\partial M$ orthogonally. The detailed statement and proof of our main deformation result (Theorem~\ref{thm:main}) is given in Section~\ref{sec:TFI-CMC}.
Issues regarding foliations by free boundary CMC hypersurfaces are discussed in Section~\ref{sec:foliation}. Finally, Sections~\ref{sec:disks} and \ref{sec:delaunay} contain applications to free boundary CMC disks and annuli in the unit ball of a space form, respectively.

\smallskip
\noindent
{\bf Acknowledgements.} It is a pleasure to thank Rafe Mazzeo for many valuable suggestions, including the key ingredient for the proof of Proposition~\ref{prop:Lpsi=1}.

\section{Free boundary CMC hypersurfaces}\label{sec:basic}

\subsection{Basic definitions}
Let $(M,g)$ be an $(n+1)$-dimensional Riemannian manifold with boundary.
We denote by $\vec n_{\partial M}$ the outer unit normal vector field to the boundary of $M$. This choice is used to define the \emph{second fundamental form}
\begin{equation*}
\sff^{\partial M}(X,Y):=g\big(\nabla_X Y,\vec n_{\partial M}\big).
\end{equation*}
The main objects studied in this paper are compact hypersurfaces $\Sigma$ of $M$, whose boundary
$\partial\Sigma$ lies in $\partial M$.
In order to simplify notation for these objects, we identify embeddings $x\colon\Sigma\hookrightarrow M$ with their image $x(\Sigma)\subset M$.
\begin{definition}\label{def:normalhypersurfaces}
We say that a hypersurface $\Sigma\subset M$ is \emph{admissible} if
\begin{itemize}
\item[(a)] $\Sigma\cap\partial M=\partial\Sigma$;
\item[(b)] the normal bundle $T\Sigma^\perp$ is orientable.
\end{itemize}
If, in addition, $\vec n_{\partial M}(p)\in T_p\Sigma$ for all $p\in\partial\Sigma$, we say $\Sigma$ is \emph{normal}.
An admissible hypersurface $\Sigma$ is said to \emph{bound a finite volume} if
\begin{itemize}
\item[(c)] $M\setminus\Sigma=\Omega_1\cup\Omega_2$, with $\overline\Omega_1$ compact and $\Omega_1\cap\Omega_2=\emptyset$.
\end{itemize}
\end{definition}
\begin{remark}\label{rem:orthrelation}
Note that if $\Sigma$ is normal, then $\Sigma$ and $\partial M$ are transverse submanifolds. Moreover, for all $p\in\partial\Sigma$, $\vec n_{\partial M}(p)\in T_p(\partial\Sigma)^\perp\cap T_p\Sigma$.
\end{remark}

\begin{remark}\label{rem:stability}
Observe also that the properties (a), (b) and (c) in Definition~\ref{def:normalhypersurfaces} are stable under $C^1$-perturbations of $\Sigma$.
\end{remark}

Given an admissible hypersurface $\Sigma$, we choose an orientation for $T\Sigma^\perp$, and denote by $\vec n_\Sigma$ the positively oriented unit normal vector field along $\Sigma$. When $\Sigma$ bounds a finite volume, the positive orientation of $T\Sigma^\perp$ is chosen to point towards the bounded region $\Omega_1$.  The \emph{second fundamental form of $\Sigma$} is given by
\begin{equation*}
\sff^{\Sigma}(X,Y):=g\big(\nabla_X Y,\vec n_{\Sigma}\big),
\end{equation*}
and its trace is called the \emph{mean curvature function} $H_\Sigma$. The \emph{mean curvature vector} of $\Sigma$ is defined as $\vec H_\Sigma:=H_\Sigma \,\vec n_\Sigma$. If $H_\Sigma$ is constant, then $\Sigma$ is called a \emph{constant mean curvature hypersurface} (or \emph{CMC hypersurface}), and a \emph{minimal hypersurface} if the value of this constant is zero.

\subsection{Regularity assumptions}
The central idea of the present paper is to consider \emph{perturbations} of a given free boundary CMC hypersurface $\Sigma_0\subset M$ by nearby admissible normal hypersurfaces. Note that the CMC condition implies that $\Sigma_0$ is necessarily smooth. A technical, nevertheless essential, point is to establish an appropriate notion of regularity for perturbed hypersurfaces and, related to this choice, a topology on the set of hypersurfaces in $M$. As discussed in Section~\ref{sec:normalhypersurfaces}, the set of perturbations of $\Sigma_0$ having a given regularity can be parametrized, via the normal exponential map, by real-valued functions on $\Sigma_0$ with the same regularity.

There are two issues related to the choice of regularity: the question of Fredholmness for the Jacobi operator (see Subsection~\ref{sub:Jacobifields}), and, more specifically for the present work, the question of existence of a bounded right-inverse for the map that carries a function on a compact manifold to its normal derivative along the boundary (see Section~\ref{sec:normalhypersurfaces}).

A customary choice when studying variations of a CMC hypersurface is to require a H\"older-type regularity $C^{2,\alpha}$, for some $\alpha\in(0,1)$. This provides the correct framework for the Fredholmness of the Jacobi operator, see below, as well as for the right-inverse problem, via a Whitney-type extension theorem (see \cite[Chap.\ VI]{stein}). We use $C^{2,\alpha}$-regularity assumptions throughout, which puts us in the framework of (non-separable) \emph{Banach} manifolds.

An alternative (separable) \emph{Hilbert} manifold approach could be obtained by requiring a Sobolev-type $H^s$-regularity; using standard Sobolev embedding theorems
(see for instance \cite[Ch.\ 2, \S~9, Thm.\ 2.30, p.\ 50]{Aubin_book}), in order to have inclusion in $C^2$, one needs to assume that $s>2+\tfrac12\mathrm{dim}(\Sigma)=2+\frac n2$. Also in this case, Fredholmness for the Jacobi operator is obtained readily from the Fredholmness of the Laplacian, and the right-inverse question is answered by the Sobolev Trace Theorem for the normal derivative (see \cite[Thm.\ 8.3, p.\ 39]{LionsMagenes}).

\subsection{Jacobi fields}\label{sub:Jacobifields}
Given a free boundary CMC hypersurface $x_0\colon\Sigma\hookrightarrow M$, denote by $J_{x_0}$ its \emph{Jacobi operator}, which is the second-order linear differential operator
\begin{equation}\label{eq:Jacobiop}
J_{x_0}(\psi):=\Delta_{\Sigma}\psi-\big(\|\sff^{\Sigma}\|^2+\Ric_g(\vec n_\Sigma,\vec n_\Sigma)\big)\psi,
\end{equation}
defined on the space of $C^2$ functions $\psi\colon\Sigma\to\R$. In the above, $\Delta_{\Sigma}$ is the Laplacian of $\big(\Sigma,x_0^*(g)\big)$ with nonnegative spectrum, and $\|\sff^{\Sigma}\|$ is the Hilbert-Schmidt norm of the second fundamental form of $x_0\colon\Sigma\hookrightarrow M$.
A \emph{Jacobi field} along $x_0$ is a smooth function $\psi\colon\Sigma\to\R$ satisfying $J_{x_0}(\psi)=0$.

Let $x_r\colon\Sigma\hookrightarrow M$, $r\in(-\varepsilon,\varepsilon)$, be a smooth variation of $x_0$, where each $x_r$ is a CMC embedding with mean curvature $H_r$. Let $V=\frac{\mathrm d}{\mathrm dr}\big\vert_{r=0}x_r$ be the corresponding variational vector field. Then $\psi_V:=g\big(V,\vec n_\Sigma)$ satisfies
\begin{equation}\label{eq:Jacobionfunctions}
J_{x_0}(\psi_V)=\tfrac{\dd}{\dd r}\big\vert_{r=0}H_r,
\end{equation}
so that $\psi_V$ is a Jacobi field exactly when $H_r\equiv H_0$ to first order as $r\to0$.
Thus, $J_{x_0}$ is interpreted as the
linearization of the mean curvature $H_\Sigma$ at $x_0(\Sigma)$.
Furthermore, if each $x_r$ is a free boundary CMC hypersurface, then $\psi=\psi_V$ satisfies the \emph{linearized free boundary condition}
\begin{equation}\label{eq:linearizedfreebdy}
g\big(\nabla \psi,\vec n_{\partial M}\big)+\sff^{\partial M}\big(\vec n_\Sigma,\vec n_\Sigma\big)\,\psi=0.
\end{equation}

The restriction of $J_{x_0}$ to the closed subspace
\begin{equation}\label{eqn:BC}
C^{2,\alpha}_\partial(\Sigma):=\big\{\psi\in C^{2,\alpha}(\Sigma):g\big(\nabla \psi,\vec n_{\partial M}\big)+\sff^{\partial M}\big(\vec n_\Sigma,\vec n_\Sigma\big)\,\psi=0
\big\}
\end{equation}
is a Fredholm operator of index zero that takes values in $C^{0,\alpha}(\Sigma)$, see \cite[Sec~2]{MaxNunSmi13}. The free boundary CMC hypersurface $x_0\colon\Sigma\hookrightarrow M$ is called \emph{nondegenerate} if $\ker J_{x_0}\cap C^{2,\alpha}_\partial(\Sigma)=\{0\}$, that is, if $J_{x_0}\colon C^{2,\alpha}_\partial(\Sigma)\to C^{0,\alpha}(\Sigma)$ is an isomorphism. The dimension of $\ker J_{x_0}\cap C^{2,\alpha}_\partial(\Sigma)$ is called the \emph{nullity} of $x_0$. We warn the reader that other conventions for the nullity of $x_0$ can be found in the literature; for instance, it is sometimes defined as the dimension of $\ker J_{x_0}$ disregarding \eqref{eq:linearizedfreebdy}.

\subsection{Killing-Jacobi fields}
We say that a (local) vector field $K$ on $M$ is a \emph{(local) Killing vector field} if $K_p\in T_p(\partial M)$ for all $p\in \partial M$ in the domain of $K$, and $\mathcal L_K g=0$.

Given a free boundary CMC hypersurface $x_0\colon\Sigma\hookrightarrow M$, every local Killing vector field $K$ whose domain contains $x_0(\Sigma)$ induces a Jacobi field
\begin{equation}\label{eq:defKillJac}
\psi_K:=g(K,\vec n_\Sigma)
\end{equation}
along $x_0$, with $\psi_K\in C^{2,\alpha}_\partial(\Sigma)$. Indeed, denote by $I_s$, $s\in(-\varepsilon,\varepsilon)$, the local flow of $K$. Since each $I_s$ is a local isometry that preserves $\partial M$, the embeddings $x_s=I_s\circ x_0$ are free boundary CMC hypersurfaces with the same mean curvature as $x_0$. Recalling formula \eqref{eq:Jacobionfunctions}, this implies that $\psi_K$ is a Jacobi field in $C^{2,\alpha}_\partial(\Sigma)$. Jacobi fields of the form \eqref{eq:defKillJac} are called \emph{Killing-Jacobi fields}.

\begin{definition}\label{def:equivnondeg}
The free boundary CMC hypersurface $x_0\colon\Sigma\hookrightarrow M$ is called \emph{equivariantly nondegenerate} if $\ker J_{x_0}\cap C^{2,\alpha}_\partial(\Sigma)$ consists only of Killing-Jacobi fields.
\end{definition}

Given a free boundary CMC hypersurface $x\colon\Sigma\to M$ and a local isometry $I$ of $(M,g)$ whose domain contains $x(\Sigma)$, then $I\circ x$ is another free boundary CMC hypersurface. Local isometries of $(M,g)$ preserve $\partial M$, and therefore also the orthogonality condition. Free boundary CMC embeddings $x,x'\colon\Sigma\hookrightarrow M$ are said to be \emph{congruent} if there exists a local isometry $I$ of $(M,g)$ such that $I\big(x(\Sigma)\big)=x'(\Sigma)$.

\section{Flux of Killing fields}\label{sec:killingflux}

In preparation for the proof of our main result, we study the flux of Killing vector fields across hypersurfaces. This provides a convenient approach to study the equation $H_\Sigma\equiv\mathrm{const.}$ modulo ambient isometries, which originates from the work of Kapouleas~\cite{Kap1,Kap2}, see also \cite{BetPicSic2010}.

\begin{proposition}\label{thm:integralidentities}
Let $\Sigma\subset M$ be an admissible hypersurface and $K$ be a Killing vector field on $M$.
\begin{itemize}
\item[(A)] If $\Sigma$ is normal, then
\begin{equation}\label{eq:secondidentity}
\int_\Sigma g(K,\vec H_\Sigma)=0.
\end{equation}
\item[(B)] If $\Sigma$ bounds a finite volume, then
\begin{equation}\label{eq:firstidentity}
\int_\Sigma g(K,\vec n_\Sigma)=0.
\end{equation}
\end{itemize}
\end{proposition}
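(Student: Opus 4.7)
The plan is to prove both identities by applying the divergence theorem, exploiting in a crucial way that Killing fields are assumed tangent to $\partial M$ (by our running convention), and that $K$ itself is divergence-free on $M$ since $\mathcal L_K g = 0$.

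For part (B), I would argue on the bounded domain $\Omega_1$. Since $K$ is Killing, $\operatorname{div}_M K = 0$, so integrating over $\Omega_1$ and applying Stokes' theorem gives
\begin{equation*}
0 = \int_{\Omega_1}\operatorname{div}_M K = \int_{\partial\Omega_1} g(K,\nu),
\end{equation*}
where $\nu$ is the outward unit normal of $\partial\Omega_1$. The topological boundary $\partial\Omega_1$ splits as $\Sigma \cup (\overline\Omega_1 \cap \partial M)$. Along the latter piece, $\nu = \pm\vec n_{\partial M}$, and since $K$ is tangent to $\partial M$, that contribution vanishes. Along $\Sigma$, our orientation convention makes $\vec n_\Sigma$ point into $\Omega_1$, so $\nu = -\vec n_\Sigma$ there, and rearranging yields $\int_\Sigma g(K,\vec n_\Sigma) = 0$.

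For part (A), I would work intrinsically on $\Sigma$. Decompose $K|_\Sigma = K^\top + \psi_K\,\vec n_\Sigma$, where $\psi_K = g(K,\vec n_\Sigma)$. A direct computation using an orthonormal frame $\{e_i\}$ tangent to $\Sigma$ shows that the Killing equation $g(\nabla_X K, X)=0$ forces $\operatorname{div}_\Sigma K = \sum_i g(\nabla_{e_i} K, e_i) = 0$, while an elementary calculation yields $\operatorname{div}_\Sigma(\psi_K \vec n_\Sigma) = -H_\Sigma\,\psi_K$ (the cross term vanishes because $\vec n_\Sigma \perp e_i$). Hence
\begin{equation*}
\operatorname{div}_\Sigma K^\top = H_\Sigma\,\psi_K = g(K,\vec H_\Sigma).
\end{equation*}
Applying the divergence theorem on the manifold with boundary $\Sigma$ gives $\int_\Sigma g(K,\vec H_\Sigma) = \int_{\partial\Sigma} g(K^\top, \nu_{\partial\Sigma})$, where $\nu_{\partial\Sigma}$ is the outward unit conormal of $\partial\Sigma$ in $\Sigma$. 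Now the normality hypothesis, via Remark~\ref{rem:orthrelation}, precisely identifies $\nu_{\partial\Sigma}$ with $\vec n_{\partial M}$ along $\partial\Sigma$. Since $\vec n_{\partial M}$ is tangent to $\Sigma$, we may replace $K^\top$ by $K$ in the pairing, and tangency of $K$ to $\partial M$ forces $g(K,\vec n_{\partial M}) = 0$ along $\partial\Sigma \subset \partial M$.

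The only step requiring genuine care is the tangential divergence computation in part (A); the rest is bookkeeping. The role of the normality hypothesis is exclusively to identify the conormal of $\partial\Sigma$ in $\Sigma$ with $\vec n_{\partial M}$, so that tangency of $K$ to $\partial M$ can be invoked to kill the boundary term — without normality, $K^\top$ could have an unwanted component along $\nu_{\partial\Sigma}$.
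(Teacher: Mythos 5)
Your proof is correct and follows essentially the same route as the paper: for (B), the divergence theorem on $\Omega_1$ plus tangency of $K$ to $\partial M$; for (A), the identity $\operatorname{div}_\Sigma K^\top = g(K,\vec H_\Sigma)$ combined with Stokes on $\Sigma$ and the normality hypothesis to kill the boundary term. The only difference is one of exposition — the paper compresses the tangential-divergence computation to ``an easy computation'' and asserts directly that $K_\Sigma$ is tangent to $\partial\Sigma$, whereas you unpack both the splitting $\operatorname{div}_\Sigma K = 0$, $\operatorname{div}_\Sigma(\psi_K\vec n_\Sigma) = -H_\Sigma\psi_K$ and make explicit that the boundary term vanishes for \emph{two} reasons (normality gives $\nu_{\partial\Sigma}=\vec n_{\partial M}$, and the Killing convention gives $g(K,\vec n_{\partial M})=0$); this is a slightly cleaner accounting of which hypothesis does what, but it is the same argument.
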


\begin{proof}
For (A), in order to prove \eqref{eq:secondidentity}, denote by $K_\Sigma$ the component of $K$ tangent to $\Sigma$.  An easy computation gives $\mathrm{div}_\Sigma(K_\Sigma)=g(K,\vec H_\Sigma)$. Moreover, since $\Sigma$ is normal, $K_\Sigma$ is tangent to $\partial\Sigma$ at every point $p\in\partial\Sigma$ (see Remark~\ref{rem:orthrelation}). Hence, $\int_\Sigma g(K,\vec H_\Sigma)=\int_\Sigma\mathrm{div}_\Sigma(K_\Sigma)=0$,
by Stokes' theorem.

Let us now prove (B).
Since $\partial\Omega_1=\Sigma\cup(\partial\Omega_1\cap\partial M)$, and $K$ is tangent to $\partial M$,
we have that
\begin{equation*}
\int_{\Sigma}g(K,\vec n_\Sigma) = \int_{\partial\Omega_1} g(K,\vec n_{\partial\Omega_1})=\int_{\Omega_1}\mathrm{div}_g(K)=0,
\end{equation*}
where the second equality follows from Stokes' theorem, and the third from the fact that Killing vector fields have zero divergence.
\end{proof}

\begin{corollary}\label{thm:Htildeconstant}
Let $\Sigma$ be a normal hypersurface of $M$, and let $K_1,\ldots,K_r$ be Killing vector fields in $M$ such that the corresponding Killing-Jacobi fields $\psi_i\colon \Sigma\to\R$,
\begin{equation*}
\psi_i=g(K_i,\vec n_\Sigma), \quad i=1,\ldots,r,
\end{equation*}
are linearly independent. Let $h$ be a continuous function on $\Sigma$ which is $L^2$-orthogonal to the space generated by the $\psi_i$, i.e., $\int_\Sigma h\psi_i=0$ for all $i$.

If $\alpha_1,\ldots,\alpha_r\in\mathds R$ are such that $H_\Sigma+\sum_{i=1}^r\alpha_i \psi_i=h$, then necessarily $\alpha_i=0$ for all $i$ (and thus $H_\Sigma=h$).

If $\Sigma$ bounds a finite volume, then the conclusion above holds for all constant functions $h$, i.e.,  the function $H_\Sigma+\sum_{i=1}^r\alpha_i \psi_i$ is constant if and only if $H_\Sigma$ is constant and $\alpha_i=0$ for all $i$.
\end{corollary}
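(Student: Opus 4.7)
The plan is a direct $L^2$-testing argument, using the two integral identities of Proposition~\ref{thm:integralidentities} to kill precisely the right terms.

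For the first statement, assume $H_\Sigma + \sum_{i=1}^r \alpha_i \psi_i = h$. I would pair this equation in $L^2(\Sigma)$ against each Killing-Jacobi field $\psi_j$. On the right, the hypothesis on $h$ gives $\int_\Sigma h\psi_j = 0$. On the left, the term $\int_\Sigma H_\Sigma\,\psi_j$ equals $\int_\Sigma g(K_j,H_\Sigma\,\vec n_\Sigma) = \int_\Sigma g(K_j,\vec H_\Sigma)$, which vanishes by Proposition~\ref{thm:integralidentities}(A) since $\Sigma$ is normal. What remains is the linear system
\begin{equation*}
\sum_{i=1}^r \alpha_i \int_\Sigma \psi_i\psi_j = 0, \quad j=1,\ldots,r.
\end{equation*}
The coefficient matrix is the Gram matrix of $\psi_1,\ldots,\psi_r$ in $L^2(\Sigma)$; by linear independence of the $\psi_i$, this Gram matrix is positive definite, hence invertible, forcing $\alpha_i=0$ for all $i$. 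The identity $H_\Sigma = h$ then follows immediately.

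For the second statement, I would simply reduce to the first. The ``if'' direction is trivial. For ``only if'', suppose $H_\Sigma + \sum_i \alpha_i\psi_i = c$ with $c$ constant. Since $\Sigma$ bounds a finite volume, Proposition~\ref{thm:integralidentities}(B) gives $\int_\Sigma \psi_i = \int_\Sigma g(K_i,\vec n_\Sigma) = 0$ for each $i$, so the constant function $h\equiv c$ satisfies the $L^2$-orthogonality hypothesis of the first part. Applying part one then yields $\alpha_i=0$ and hence $H_\Sigma \equiv c$.

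No serious obstacle is expected here: the only point requiring care is recognizing that part (A) of Proposition~\ref{thm:integralidentities} is exactly what makes the $H_\Sigma$ term drop out when testing against $\psi_j$, and that part (B) supplies the orthogonality of constants to the $\psi_i$. Everything else is linear algebra on the Gram matrix.
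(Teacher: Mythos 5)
Your argument is correct and is essentially the paper's: both proofs pair the identity against Killing--Jacobi fields in $L^2$, invoke Proposition~\ref{thm:integralidentities}(A) to kill the $\int_\Sigma H_\Sigma\psi_j$ terms and the hypothesis to kill the $\int_\Sigma h\psi_j$ terms, and then use linear independence; the paper just tests against the single combination $\sum_i\alpha_i\psi_i$ (yielding $\bigl\|\sum_i\alpha_i\psi_i\bigr\|_{L^2}^2=0$) where you test against each $\psi_j$ and invert the Gram matrix, which is the same computation. The reduction of the second statement to the first via Proposition~\ref{thm:integralidentities}(B) also matches the paper exactly.
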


\begin{proof}
Suppose $H_\Sigma+\sum_{i=1}^r\alpha_i \psi_i\equiv h$. Multiplying both sides by $\sum\limits_{i=1}^r\alpha_i \psi_i$ and integrating over $\Sigma$, we have
\begin{eqnarray*}
\int_\Sigma\left(\sum_{i=1}^r\alpha_i \psi_i\right)^2&=&\sum_{i=1}^r \alpha_i\int_\Sigma h\psi_i-\sum_{i=1}^r\alpha_i\int_\Sigma H_\Sigma\psi_i=
-\sum_{i=1}^r\alpha_i\int_\Sigma H_\Sigma\psi_i\\ &=&-\sum_{i=1}^r\alpha_i\left(\int_\Sigma g(K_i,\vec H_\Sigma)\right)\stackrel{\eqref{eq:secondidentity}}=0,
\end{eqnarray*}
Thus, $\sum_{i=1}^r\alpha_i \psi_i=0$. Since the $\psi_i$ are linearly independent, it follows that $\alpha_i=0$, $i=1,\ldots,r$, and $H_\Sigma=h$, which proves the first statement.

Under the assumption that $\Sigma$ bounds a finite volume, identity \eqref{eq:firstidentity} implies that any constant function is $L^2$-orthogonal to the space generated by the $\psi_i$, and the conclusion above follows.
\end{proof}

\begin{remark}
Statement (B) of Proposition~\ref{thm:integralidentities} and the second statement of Corollary~\ref{thm:Htildeconstant} hold true for all local Killing fields $K$ defined on a neighborhood of $\Omega_1$, while in statement (A) of Proposition~\ref{thm:integralidentities} and the first statement of Corollary~\ref{thm:Htildeconstant}, it suffices to assume that the Killing fields are defined in a neighborhood of $\Sigma$.
\end{remark}

\section{The manifold of normal hypersurfaces}\label{sec:normalhypersurfaces}

In this section, we give details on the existence of a natural smooth structure on the set of normal hypersurfaces of a Riemannian manifold $(M,g)$ that are \emph{close} to a given compact normal hypersurface $\Sigma$ in some suitable topology.
As usual, perturbations of $\Sigma$ are parametrized as  graphs over $\Sigma$ of sufficiently small smooth functions $f\colon\Sigma\to\R$. The property of being a normal hypersurface (see Definition~\ref{def:normalhypersurfaces}) may not be preserved under the normal exponential flow of $\Sigma$, unless $\partial M$ is totally geodesic in $M$.
However, up to using the exponential flow of an auxiliary metric for which $\partial M$ is totally geodesic and has the same normal bundle, we may assume without loss of generality for the remainder of this section that $\partial M$ is totally geodesic in $(M,g)$.

A detailed description of the smooth structure on the set of submanifolds with fixed diffeomorphism type can be found in the series of papers by Michor \cite{mich1, mich2, mich3}, in the $C^\infty$ case. In order to fulfill the appropriate technical requirements of the Implicit Function Theorem,
one has to go beyond the $C^\infty$ realm, using embeddings of H\"older class $C^{2,\alpha}$.
There are several issues concerning the regularity of the set of unparametrized embeddings in this low regularity
setting, which are extensively discussed in \cite{AliPic}.
In this paper, however, we are only interested in the smooth structure near a given smooth unparametrized
embedding, which avoids all the subtleties involved in the lack of regularity of the change of coordinates.

\subsection{Unparametrized normal embeddings}
The appropriate setup for studying the set of submanifolds of a given diffeomorphism type is obtained by considering the notion of \emph{unparametrized embeddings}.
 Given a compact manifold $\Sigma$, we say that two embeddings $x_1,x_2\colon\Sigma\hookrightarrow M$ are {\em equivalent} if there exists a diffeomorphism $\phi\colon\Sigma\to\Sigma$ such that $x_1=x_2\circ\phi$. Equivalence classes of embeddings
 are called \emph{unparametrized embeddings of $\Sigma$ in $M$}.

We denote by $\mathcal E(\Sigma,M)$ the space of $C^{2,\alpha}$-un\-parametrized embeddings of $\Sigma$ in $M$, and by $\mathcal E_\partial(\Sigma,M)$ the subset of $\mathcal E(\Sigma,M)$ consisting of unparametrized embeddings $x\colon\Sigma\hookrightarrow M$ such that $x(\partial\Sigma)\subset\partial M$. Given an embedding $x\colon\Sigma\hookrightarrow M$, we denote by $[x]\in\mathcal E(\Sigma,M)$ the unparametrized embedding defined by $x$. Note that $[x]$ is uniquely determined by the image $x(\Sigma)$. The notions of \emph{admissible} and \emph{normal} hypersurfaces (Definition~\ref{def:normalhypersurfaces})
 extend naturally to unparametrized embeddings. Let $\mathcal E_\partial^\perp(\Sigma,M)$ denote the subset of
 $\mathcal E_\partial(\Sigma,M)$ consisting of unparametrized {\em normal} embeddings. Observe that the admissible normal embeddings form an open subset of $\mathcal E_\partial^\perp(\Sigma,M)$, see Remark~\ref{rem:stability}.

\begin{proposition}\label{prop:normhyperman}
Let $\Sigma$ be a compact manifold with boundary and $x_0\colon\Sigma\hookrightarrow M$ be an admissible smooth
normal embedding. A sufficiently small neighborhood of $[x_0]$ in $\mathcal E_\partial^\perp(\Sigma,M)$ can be
identified with an infinite-dimensional smooth submanifold $\mathcal N$ of the Banach space
$C^{2,\alpha}(\Sigma)$, with $0\in\mathcal N$ corresponding to $[x_0]$, such that
$T_0\mathcal N=C^{2,\alpha}_\partial(\Sigma)$, see \eqref{eqn:BC}.

Moreover, assume that $K_1,\ldots,K_r$ is a family of local Killing vector fields defined in a neighborhood of
$x_0(\Sigma)$, and consider the functions $\psi_i=g(K_i,\vec n_0)$ in $C^{2,\alpha}_\partial(\Sigma)$.
Then, the pseudo-group of local isometries generated by the $K_i$ has a continuous local action on $\mathcal N$,
and the orbit of $0$ under this action is a smooth submanifold of $\mathcal N$, of dimension greater than or equal
to the dimension of the span of the $\psi_i$.
\end{proposition}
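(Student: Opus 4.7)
My plan is to realize $\mathcal N$ as the zero locus of a smooth \emph{normality defect} via the Implicit Function Theorem, and then to describe the Killing orbit as the image of a finite-dimensional local Lie group under a smooth orbit map.

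\emph{Construction of $\mathcal N$.} Following the reduction at the start of the section, I assume $\partial M$ is totally geodesic, so that $\sff^{\partial M}\equiv 0$ and the normal exponential flow of $x_0$ sends $\partial\Sigma$ into $\partial M$. For $f\in C^{2,\alpha}(\Sigma)$ in a sufficiently small $0$-neighborhood $U$, the normal graph
\[
x_f(p):=\exp_{x_0(p)}\bigl(f(p)\,\vec n_0(p)\bigr)
\]
is an admissible embedding with $x_f(\partial\Sigma)\subset\partial M$, and the standard nearest-point reparametrization shows that every admissible embedding $C^{2,\alpha}$-close to $x_0$ is congruent to exactly one such $x_f$. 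To single out \emph{normal} embeddings I introduce the smooth map
\[
\Phi\colon U\longrightarrow C^{1,\alpha}(\partial\Sigma),\qquad \Phi(f)(p):=g\bigl(\vec n_{\partial M}(x_f(p)),\,\vec n_f(p)\bigr),\quad p\in\partial\Sigma,
\]
where $\vec n_f$ is the unit normal to $x_f(\Sigma)$, so that $[x_f]\in\mathcal E_\partial^\perp(\Sigma,M)$ if and only if $\Phi(f)=0$. Using the standard variational identity $\tfrac{\dd}{\dd t}\big|_{t=0}\vec n_{t\psi}=-\nabla^\Sigma\psi$ together with $\nabla_{\vec n_0}\vec n_{\partial M}\equiv 0$ along $\partial\Sigma$ (a direct consequence of total geodesicity and $\vec n_0\in T\partial M$ on $\partial\Sigma$), I obtain
\[
d\Phi_0(\psi)=-g\bigl(\nabla\psi,\vec n_{\partial M}\bigr)\big|_{\partial\Sigma},
\]
whose kernel is exactly $C^{2,\alpha}_\partial(\Sigma)$ since $\sff^{\partial M}=0$. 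A continuous right-inverse to $d\Phi_0$ is provided by the Whitney extension theorem in the $C^{2,\alpha}$-setting (or by the Sobolev Trace Theorem in the $H^s$-setting), which is precisely the normal-derivative extension problem singled out earlier in the paper. The Implicit Function Theorem then produces $\mathcal N:=\Phi^{-1}(0)\cap U$ as a smooth Banach submanifold of $C^{2,\alpha}(\Sigma)$ with $T_0\mathcal N=C^{2,\alpha}_\partial(\Sigma)$, and $f\mapsto[x_f]$ identifies it with a neighborhood of $[x_0]$ in $\mathcal E_\partial^\perp(\Sigma,M)$.

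\emph{The Killing orbit.} The germs of local Killing fields of $(M,g)$ near $x_0(\Sigma)$ form a finite-dimensional Lie algebra; let $\mathfrak g$ denote the Lie subalgebra generated by $K_1,\ldots,K_r$, and let $G$ be the corresponding local Lie group of germs of local isometries, which automatically preserves $\partial M$. For $I\in G$ close to the identity, $I\bigl(x_0(\Sigma)\bigr)$ is $C^{2,\alpha}$-close to $x_0(\Sigma)$ and corresponds via the chart above to a unique $f_I\in\mathcal N$. The assignment $I\mapsto f_I$ is smooth and provides the desired continuous local action on $\mathcal N$. Its image is the orbit of $0$, which factors through $G/\mathrm{Stab}([x_0])$ as an injective immersion and is therefore a smooth (immersed) submanifold of $\mathcal N$. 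A direct variational computation identifies the differential of $I\mapsto f_I$ at the identity with the infinitesimal action
\[
\mathfrak g\ni K\longmapsto g(K,\vec n_0)\in T_0\mathcal N,
\]
whose image contains $\psi_1,\ldots,\psi_r$; hence the orbit has dimension at least $\dim\mathrm{span}(\psi_1,\ldots,\psi_r)$.

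\emph{Main obstacle.} The genuinely delicate step is the surjectivity, with a bounded right-inverse, of $d\Phi_0\colon C^{2,\alpha}(\Sigma)\to C^{1,\alpha}(\partial\Sigma)$: this is precisely where the H\"older (or Sobolev) choice of regularity is indispensable, via the Whitney extension theorem (resp.\ the Sobolev Trace Theorem). By contrast, the smoothness of $I\mapsto f_I$ and the identification of its differential with $K\mapsto g(K,\vec n_0)$ are routine consequences of smooth dependence of ODE flows on parameters and of the variational formula \eqref{eq:Jacobionfunctions}.
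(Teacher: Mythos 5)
Your proposal is correct and follows essentially the same strategy as the paper: parametrize nearby hypersurfaces as normal graphs over $\Sigma_0$ (after the reduction to totally geodesic $\partial M$), realize $\mathcal N$ as the zero set of a ``normality defect'' map from $C^{2,\alpha}(\Sigma)$ to $C^{1,\alpha}(\partial\Sigma)$, verify that its linearization at $0$ is a submersion via a Whitney-type extension, and apply the Implicit Function Theorem; the orbit statement is handled through the local isometry (pseudo-)group. The genuine difference is in the formulation of the defect map and its linearization. The paper works in the coordinates provided by the auxiliary diffeomorphism $\Sigma\times[0,\varepsilon)\to M$, writes normality as $\zeta(p,f(p))=\dd f(p)\cdot X(p,f(p))$, and then proves Claim~\ref{claim:sub} to compute $\partial\zeta/\partial t(p,0)$ by a somewhat delicate covariant differentiation of $\dd\Phi^{-1}_{(p,t)}(\vec n_{\partial M})$. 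You instead use the scalar defect $\Phi(f)(p)=g(\vec n_{\partial M}(x_f(p)),\vec n_f(p))$ and the standard variation formula $\tfrac{D}{\dd t}\big|_{t=0}\vec n_{t\psi}=-\nabla^\Sigma\psi$, together with $\nabla_{\vec n_0}\vec n_{\partial M}=0$ (valid on $\partial\Sigma$ since $\vec n_0$ is tangent to the totally geodesic $\partial M$ there) to get $\dd\Phi_0(\psi)=-g(\nabla\psi,\vec n_{\partial M})$ directly. This is arguably tighter and bypasses the paper's Claim~\ref{claim:sub} entirely; the Whitney/Sobolev right-inverse then finishes the submersion argument in the same way.

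One place where you underestimate the subtlety is the remark that the smoothness of $I\mapsto f_I$ is ``routine.'' In the $C^{2,\alpha}$ Banach setting, the action of the isometry pseudo-group on $\mathcal N$ (i.e.\ on $C^{2,\alpha}$ unparametrized embeddings) is only continuous, not smooth — composition with a diffeomorphism loses a derivative on a $C^{2,\alpha}$ embedding. The reason the orbit map through $0$ is nevertheless smooth is precisely that $x_0$ is $C^\infty$, so no regularity is lost when reparametrizing $I(x_0(\Sigma))$ as a graph; this is the content of \cite[Prop.~5.1]{AliPic}, which the paper cites for exactly this purpose. Your argument via the injective immersion $G/\mathrm{Stab}([x_0])\hookrightarrow\mathcal N$ is correct in outcome, but it would be worth making explicit that smoothness of $I\mapsto f_I$ hinges on the $C^\infty$ regularity of $x_0$ rather than being a general feature of the action.
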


\begin{proof}
To each sufficiently small $C^{2,\alpha}$-map $f\colon\Sigma\to\R$, we associate the embedding
\begin{equation}\label{eq:xf}
x_f\colon\Sigma\hookrightarrow M, \quad x_f(p):=\exp_{x_0(p)}\big(f(p)\,\vec n_0\big),
\end{equation}
where $\vec n_0$ is the unit normal vector field along $x_0\colon\Sigma\hookrightarrow M$. Since $x_0$ is
a normal embedding, $\vec n_0(p)\in T_p(\partial M)$ for all $p\in x_0(\Sigma)\cap\partial M$, and hence
$x_f(\partial\Sigma)\subset\partial M$, because $\partial M$ is totally geodesic.
Clearly, if $f\equiv0$, then $x_f=x_0$.  In order to simplify notation, we denote the image of
$x_f\colon\Sigma\hookrightarrow M$ by $\Sigma_f$; in particular, $\Sigma_0$ is the image of the original
embedding $x_0\colon\Sigma\hookrightarrow M$.

The above correspondence $f\mapsto \Sigma_f$ gives a bijection from a sufficiently small neighborhood
$\mathcal U$ of $0\in C^{2,\alpha}(\Sigma)$ to a neighborhood $\mathcal V$ of $[x_0]\in\mathcal E_\partial(\Sigma,M)$.
Denote by  $\mathcal N$  the subset of $\mathcal U$ consisting of maps $f$ such that $x_f$ is a normal embedding.

\begin{claim}
$\mathcal N$ is a submanifold of $C^{2,\alpha}(\Sigma)$.
\end{claim}
In order to prove this claim,  let $\varepsilon>0$ be small and consider the diffeomorphism
\begin{equation*}
\Phi\colon\Sigma\times[0,\varepsilon)\longrightarrow\mathcal A,\quad \Phi(p,t):=\exp_{x_0(p)}\big(t\,\vec n_0(p)\big),
\end{equation*}
where $\mathcal A$ is an open subset of $M$ containing $\Sigma_0$.
Note that $\Phi(\Sigma,0)=x_0(\Sigma)$ is the original normal hypersurface $\Sigma_0$, and that $\dd\Phi_{(p,t)}$ is an isomorphism for $t>0$ sufficiently small. In fact,
\begin{equation}\label{eq:dPhi0p}
\dd\Phi_{(p,0)}(v,\tau)=\tau\,n_0(p)+v,
\end{equation}
for all $\tau\in\mathds R$ and $v\in T_p\Sigma_0$. For $(p,t)\in\partial\Sigma\times[0,\varepsilon)$, set
\begin{equation}\label{eq:dpsin}
\big(X(p,t),\zeta(p,t)\big):=\dd\Phi_{(p,t)}^{-1}\big(\vec n_{\partial M}(\Phi(p,t))\big)\in T_p\Sigma\times\R;
\end{equation}
note that $\zeta$ is a smooth function on $\partial\Sigma\times\left[0,\varepsilon\right)$, and
$X$ is a smooth time-dependent vector field along $\partial\Sigma$ which is tangent to $\Sigma$.

For $f\in\mathcal U$, $x_f(p)=\Phi\big(p,f(p)\big)$, and so $T_{x_f(p)}\Sigma_f=\mathrm{Im}\left[\mathrm d\Phi_{(p,f(p))}\circ\big(\mathrm{Id}, \mathrm df(p)\big)\right]$.
Thus,
\begin{equation}\label{eq:dpsitang}
\dd\Phi_{(p,f(p))}^{-1}\left[T_{x_f(p)} \Sigma_f \right]=\mathrm{Im}\big(\mathrm{Id}, \mathrm df(p)\big)=\mathrm{Graph}\big(\dd f(p)\big).
\end{equation}
The embedding $x_f$ is normal if and only if $\vec n_{\partial M}\big(x_f(p)\big)\in T_{x_f(p)}\Sigma_f$ for
all $p\in\partial\Sigma$. From \eqref{eq:dpsin} and \eqref{eq:dpsitang}, this condition reads
\[\phantom{,\quad\mbox{ for all } p\in\partial\Sigma}\zeta\big(p,f(p)\big)=\dd f(p)\cdot X\big(p,f(p)\big)\quad\mbox{ for all } p\in\partial\Sigma.\]
In other words, the set of normal embeddings $x_f$ is identified with the inverse image $\eta^{-1}(0)$, where
$\eta$ is the smooth map $C^{2,\alpha}(\Sigma)\ni f\mapsto\eta(f)\in C^{1,\alpha}(\partial\Sigma)$ defined by
\[\phantom{,\quad p\in\partial\Sigma.}\eta(f)(p)=\mathrm df(p)\cdot X\big(p,f(p)\big)-
\zeta\big(p,f(p)\big),\quad p\in\partial\Sigma.\]
We prove that $\mathcal N$ is a smooth submanifold of $C^{2,\alpha}(\Sigma)$ by showing that $\eta$ is
a submersion at $f=0$. The linearization of $\eta$ at $0$ applied to $\psi\in C^{2,\alpha}(\Sigma)$ reads
\begin{equation}\label{eq:dtheta0}
\dd\eta_0(\psi)(p)=\mathrm d\psi(p)\cdot X(p,0)-\frac{\partial\zeta}{\partial t}(p,0)\,\psi(p),\quad p\in\partial\Sigma.
\end{equation}
Note that $X(p,0)=\vec n_{\partial M}(p)$, which gives a unit orthogonal field along $\partial\Sigma$
(see Remark~\ref{rem:orthrelation}).
\begin{claim}
\label{claim:sub}
For all $p\in\partial\Sigma$,
\begin{equation}\label{eq:partiallambda}
\frac{\partial\zeta}{\partial t}(p,0)=-\sff^{\partial M}\big(\vec n_0(p),\vec n_0(p)\big).
\end{equation}
\end{claim}

Let us first assume the above claim and complete the proof of the theorem.

Proving that $\eta\colon C^{2,\alpha}(\Sigma)\to C^{1,\alpha}(\partial\Sigma)$ is a submersion\footnote{If one considers the Sobolev regularity $H^s$, $s>2+\frac n2$, then the natural range of the map $\eta$ is $H^{s-3/2}(\partial\Sigma)$, and the existence of a bounded right-inverse for $\dd\eta_0$ is given by the Sobolev Trace Theorem, see for instance \cite[Thm.~8.3, p.\ 39]{LionsMagenes}.}
at $f=0$ amounts to showing that $\dd\eta_0$ is surjective and that $\ker \dd\eta_0$ is complemented in $C^{2,\alpha}(\Sigma)$. This is equivalent to the existence of a bounded linear right-inverse for the map $\dd\eta_0$. Such a right-inverse is given in Lemma~\ref{thm:rightinverse} below.

Finally, $T_0\mathcal N=C^{2,\alpha}_\partial(\Sigma)$ follows from $T_0\mathcal N=\ker\dd\eta_0$,
using \eqref{eq:dtheta0}, \eqref{eq:partiallambda}, and the identity \eqref{eq:in0ep} below.

As to the natural action of the (pseudo-)group of isometries generated by a family of (local) Killing fields on the set of $C^{2,\alpha}$-unparametrized embeddings, note that this restricts to an action on the set of unparametrized normal embeddings of $\Sigma$ in $M$, i.e., a local action on $\mathcal N$. This action is only continuous, but the orbit of any smooth map is a smooth submanifold \cite[Prop.\ 5.1]{AliPic}. It is easy to see that the tangent space to the orbit through $0$ contains the functions $\psi_i=g(K_i,\vec n_0)$.
\end{proof}

\begin{proof}[Proof of Claim~\ref{claim:sub}]
Rewrite equation \eqref{eq:dpsin} as
\begin{equation}\label{eq:star}
\dd\Phi_{(p,t)}\big(X(p,t),\zeta(p,t)\big)=\vec n_{\partial M}\big(\Phi(p,t)\big).
\end{equation}
Using \eqref{eq:dPhi0p}, \eqref{eq:star}, and that $\vec n_{\partial M}(p)\in T_p\Sigma_0$ for $p\in\partial\Sigma$, we have:
\begin{equation}\label{eq:in0ep}
\zeta(p,0)=0,\quad\text{and}\quad X(p,0)=\vec n_{\partial M}(p).
\end{equation}
Fix $p\in\partial\Sigma$, and consider \eqref{eq:star} as an equality between vector fields along the geodesic $t\mapsto\Phi(p,t)$. Let us differentiate covariantly (using the Levi-Civita connection of $g$)
identity \eqref{eq:star} at $t=0$.
For $s,t\in\mathds R$ small, set:
\[\rho(t,s):=\mathrm d\Phi_{(p,t)}\big(X(p,t),\zeta(p,t)\big),\]
so that the left-hand side of \eqref{eq:star} is given by $\rho(t,t)$.
Denoting covariant derivatives along curves by $\frac{\mathrm D}{\mathrm dt}$ and
$\frac{\mathrm D}{\mathrm ds}$, we have:
\begin{equation}\label{eq:clearly}
\tfrac{\mathrm D}{\mathrm dt}\big\vert_{t=0}\rho(t,t)=\tfrac{\mathrm D}{\mathrm dt}\big\vert_{t=0}\rho(t,0)+\tfrac{\mathrm D}{\mathrm ds}\big\vert_{s=0}\rho(0,s).
\end{equation}
Moreover,
\[\rho(t,0)\stackrel{\eqref{eq:in0ep}}=\mathrm d\Phi_{(p,t)}\big(\vec n_{\partial M}(p),0\big),\qquad
\rho(0,s)\stackrel{\eqref{eq:dPhi0p}}=\zeta(p,s)\,\vec n_0(p)+X(p,s).\]
In order to differentiate $\rho(t,0)$, let us consider a smooth curve $u\mapsto p(u)\in\Sigma_0$ with $p(0)=p$ and $p'(0)=\vec n_{\partial M}(p)$, so that $\rho(t,0)=\tfrac{\mathrm d}{\mathrm du}\big\vert_{u=0}\Phi\big(p(u),t\big)$. Then
\begin{multline}\label{eq:derivatarhot0}
\tfrac{\mathrm D}{\mathrm dt}\big\vert_{t=0}\rho(t,0)=\tfrac{\mathrm D}{\mathrm dt}\big\vert_{t=0}
\tfrac{\mathrm d}{\mathrm du}\big\vert_{u=0}\Phi\big(p(u),t\big)\\=
\tfrac{\mathrm D}{\mathrm du}\big\vert_{u=0}
\tfrac{\mathrm d}{\mathrm dt}\big\vert_{t=0}\Phi\big(p(u),t\big)=\tfrac{\mathrm D}{\mathrm du}\big\vert_{u=0}\vec n_\Sigma\big(p(u)\big)=\nabla_{\vec n_{\partial M}(p)}\vec n_0.
\end{multline}
The derivative of $\rho(0,s)$ is given by
\begin{equation}\label{eq:derivatarho0s}
\tfrac{\mathrm D}{\mathrm ds}\big\vert_{s=0}\rho(0,s)=\frac{\partial\zeta}{\partial t}(p,0)\,\vec n_0(p)+\frac{\partial X}{\partial t}(p,0).
\end{equation}
Note that, for $p$ fixed, $t\mapsto X(p,t)$ is a curve in the fixed vector space $T_p\Sigma_0$, and
$\frac{\partial X}{\partial t}(p,0)\in T_p\Sigma_0$ is the standard derivative of this curve at $t=0$.
Finally, the derivative of the right-hand side of \eqref{eq:star} is given by
\begin{equation}\label{eq:derlatodestro}
\tfrac{\mathrm D}{\mathrm dt}\big\vert_{t=0}\vec n_{\partial M}\big(\Phi(p,t)\big)=\nabla_{\vec n_0(p)}\vec n_{\partial M}.
\end{equation}
Using \eqref{eq:clearly}, \eqref{eq:derivatarhot0} and \eqref{eq:derivatarho0s}, we obtain that the covariant derivative of equation \eqref{eq:star} at $t=0$ reads (cf.\ \cite[p.\ 174]{schoen}):
\begin{equation*}
\nabla_{\vec n_{\partial M}(p)}\vec n_\Sigma+\frac{\partial\zeta}{\partial t}(p,0)\,\vec n_\Sigma(p)+\frac{\partial X}{\partial t}(p,0)=\nabla_{\vec n_\Sigma(p)}\vec n_{\partial M}.
\end{equation*}
Note that the first and the third terms in the left-hand side of the above are tangent to $\Sigma_0$. Thus, multiplying both sides by the unit normal vector $\vec n_0(p)$ yields
\[\frac{\partial\zeta}{\partial t}(p,0)=g\big(\nabla_{\vec n_0(p)}\vec n_{\partial M},\vec n_0(p)\big)=-\sff^{\partial M}\big(\vec n_0(p),\vec n_0(p)\big).\qedhere\]
\end{proof}
\begin{lemma}\label{thm:rightinverse}
There exists a bounded linear map $C^{1,\alpha}(\partial\Sigma)\ni g\mapsto\mathcal F_g\in C^{2,\alpha}(\Sigma)$ such that
$\mathcal F_g\equiv0$ on $\partial\Sigma$ and $\vec n_{\partial M}(\mathcal F_g)=g$.
\end{lemma}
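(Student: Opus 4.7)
The plan is to build the right inverse $\mathcal F$ via a Whitney-type extension, following the approach flagged by the authors through their reference to \cite[Chap.\ VI]{stein}. I would reduce to a local model by choosing a collar neighborhood of $\partial\Sigma$ in $\Sigma$, a finite atlas of charts straightening the boundary (so each chart maps to $\R^{n-1}\times[0,\varepsilon)$ with $\partial\Sigma$ corresponding to $\{t=0\}$ and $\vec n_{\partial M}$ to $\partial_t$), and a subordinate partition of unity $\{\phi_i\}$. After this reduction, it suffices to construct a bounded linear operator $E\colon C^{1,\alpha}(\R^{n-1})\to C^{2,\alpha}(\R^{n-1}\times[0,\varepsilon))$ satisfying $E(g)(x,0)=0$ and $\partial_t E(g)(x,0)=g(x)$.

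In the local model, the naive ansatz $E(g)(x,t):=tg(x)$ is not $C^{2,\alpha}$, since its tangential second derivative $\partial_x^2E(g)=t\partial_x^2 g$ requires $g\in C^{2,\alpha}$, which is one order too much. The remedy is to mollify $g$ at a scale comparable to the distance to the boundary: fix a smooth compactly supported mollifier $\rho$ on $\R^{n-1}$ with $\int\rho=1$, set $\rho_t(y):=t^{-(n-1)}\rho(y/t)$, and define
\[
E(g)(x,t):=t\,\chi(t)\,(g\ast\rho_t)(x),
\]
where $\chi$ is a smooth cutoff equal to $1$ near $t=0$ with support in $[0,\varepsilon/2)$. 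Both boundary conditions are immediate. The tangential second derivative rewrites as $\partial_x^2 E(g)(x,t)=t\chi(t)(\partial_x g\ast\partial_x\rho_t)(x)$, and since $\|\partial_x\rho_t\|_{L^1}\sim t^{-1}$ while $\partial_x g$ is $\alpha$-H\"older, this expression is of order $t^\alpha$ with corresponding H\"older control in both $x$ and $t$. The mixed and purely normal derivatives satisfy analogous (easier) Littlewood--Paley-type estimates. The global $\mathcal F_g$ is then obtained by patching, i.e.\ by summing the local $E(\phi_i g)$ pulled back via the charts and multiplying by a bump function supported in the collar.

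I expect the main obstacle to be precisely the H\"older estimate on $\partial_x^2E(g)$: showing that the blow-up of $\partial_x^2(g\ast\rho_t)$ as $t\to0$ is exactly tamed by the factor $t$, with the right joint H\"older control in $(x,t)$. This is the standard Littlewood--Paley/Besov computation lying at the heart of Stein's Whitney extension theorem for H\"older spaces, which one may also simply cite as a black box. In the alternative Sobolev framework alluded to earlier in the section, the analogous statement is immediate from the Sobolev trace theorem \cite[Thm.\ 8.3, p.\ 39]{LionsMagenes}, which produces a bounded right inverse for the pair $(F|_{\partial\Sigma},\partial_\nu F|_{\partial\Sigma})$ on $H^s$.
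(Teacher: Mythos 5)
Your proposal is correct and rests on exactly the same idea the paper implements in Appendix~\ref{app:whitney}: extend $g$ by multiplying the distance-to-boundary coordinate $t$ by a mollification of $g$ at scale $t$, then patch with a partition of unity. The one genuine difference is the choice of mollifier. You take a smooth $\rho$, so that $E(g)(x,t)=t\chi(t)(g*\rho_t)(x)$, and the required $C^{2,\alpha}$ estimate on the tangential second derivatives becomes a Littlewood--Paley/Besov computation (or a citation of Stein), which you correctly identify as the crux. The paper instead chooses the unnormalized box mollifier, defining $F_h(x,z)=z^{-(d-1)}\int_{Q(x,z)}h$, which is the same object $z\cdot(\text{average of }h\text{ over the cube of side }z)$. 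This specific choice is what makes Appendix~\ref{app:whitney} ``direct and elementary'': by the fundamental theorem of calculus, each partial derivative of $F_h$ is a finite difference (or an integral average) of lower-order partial derivatives of $h$ evaluated along linear slices, so the claim that $h\in C^{k,\alpha}$ implies $F_h\in C^{k+1,\alpha}$ follows by the bookkeeping induction of Remark~\ref{thm:rempartder}, with no harmonic-analysis input. Your version is closer to the standard Whitney/Stein extension and is perfectly valid, but it buys generality you don't need at the cost of having to either carry out the joint H\"older estimate or cite it as a black box; the paper's box-integral trick is tailored to make that estimate transparent.
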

\begin{proof}
The result is obtained with a straightforward partition of unity argument from an extension theorem of Whitney type, see for instance \cite[Ch.\ VI]{stein}, which gives an analogous result in $\mathds R^n$.
For the reader's convenience, a direct and elementary proof of Lemma~\ref{thm:rightinverse} is given in Appendix~\ref{app:whitney}.
\end{proof}

\section{Deformation of free boundary CMC hypersurfaces}\label{sec:TFI-CMC}

In this section, we give the detailed statement and proof of our main deformation result for free boundary CMC hypersurfaces. This will be a corollary of a slightly  more general result, concerning deformations of a free boundary CMC hypersurface by families of free boundary hypersurfaces with {\em prescribed mean curvature function}. We consider deformations parameterized by some smooth manifold $\Lambda$, with varying background metrics $g_\lambda$, $\lambda\in\Lambda$, having a varying group of isometries. A crucial assumption for our theory is a sort of smooth dependence of the ambient symmetries relatively to the parameter $\lambda$, which is stated in terms of smooth dependence of the Killing fields of $g_\lambda$. The construction has strong analogies with the notion of smooth bundles of Lie groups, introduced in \cite{UmeYam} in order to study deformations of immersed CMC tori from $\mathds R^3$ to other space forms. The notion was further developed in \cite{TasUmeYam} to study isometry groups of deformation of symmetric spaces. More notation and terminology are needed to formulate our result precisely.

\subsection{Smooth families of Killing fields and equivariant nondegeneracy}
In what follows, we allow for varying Riemannian metrics on $M$, and to this aim we consider the following setup:
\begin{itemize}
\item[(a)] $\Lambda$ is a differentiable manifold;
\item[(b)] $(g_\lambda)_{\lambda\in\Lambda}$ is a smooth family of Riemannian metrics on $M$;
\item[(c)] each $g_\lambda$ defines the same normal bundle  $T(\partial M)^\perp$ of $\partial M$.
\end{itemize}
Let us now assume that a smooth family $\Lambda\ni\lambda\mapsto\big\{K_1^\lambda,\ldots,K_r^\lambda\big\}$ of (local) vector fields on $M$ is given, and let $\Sigma\subset M$ be a compact hypersurface which is contained in the common domain of the $K_i^\lambda$'s and transversely oriented (i.e., its normal bundle is oriented). As usual, we denote by $\vec n_\Sigma^\lambda$ the $g_\lambda$-unit positively oriented normal field along $\Sigma$,
and by $\psi_i^\lambda$ the smooth function on $\Sigma$ defined by $\psi_i^\lambda=g_\lambda\big(\vec n_\Sigma^\lambda,K_i^\lambda\big)$.

We are interested in the above setup mainly under the assumptions that:
\begin{itemize}
\item each $K_i^\lambda$ is a (local) Killing field for $(M,g_\lambda)$;
\item for a given $\lambda_0\in\Lambda$, $\Sigma$ is a free boundary CMC hypersurface of $(M,g_{\lambda_0})$;
\item the maps $\psi_i^{\lambda_0}$, $i=1,\ldots,r$, form a basis of the space of Killing--Jacobi fields along $\Sigma$.
\end{itemize}
In this situation, we say that $\{K_1^\lambda,\ldots,K_r^\lambda\}_{\lambda\in\Lambda}$ \emph{is a smooth frame of Killing--Jacobi
fields generators along $\Sigma$} near $\lambda_0$.

\subsection{Mean curvature deformations}
With this setup, we are now ready to formulate our main result.

\begin{theorem}\label{thm:main}
Let $(g_\lambda)_{\lambda\in\Lambda}$ be a smooth family of Riemannian metrics as above, and
let $\Sigma_0 \subset M$ be an admissible hypersurface of $M$ that bounds a finite volume.
Let $\lambda_0\in\Lambda$ be fixed, and assume that $\Sigma_0$ is an equivariantly nondegenerate free boundary CMC embedding in $(M,g_{\lambda_0})$,
with mean curvature $h_0\in\mathds R$.
Assume that $\Lambda\ni\lambda\mapsto\{K_1^\lambda,\ldots,K_r^\lambda\}$ is a smooth frame of Killing--Jacobi
fields generators along $\Sigma$ near $\lambda_0$.

Then, there exists a neighborhood  ${\mathcal U}$ of  $(h_0,\lambda_0)$ in $\mathds R\times\Lambda$
and  a smooth family ${\mathcal U} \ni (h,\lambda)\mapsto\Sigma_{(h,\lambda)}\subset M$ of admissible hypersurfaces,  such that:
\begin{itemize}
\item for all $(h,\lambda)\in\mathcal U$, $\Sigma_{(h,\lambda)}$ is a free boundary hypersurface in $(M,g_\lambda)$ diffeomorphic to $\Sigma_0$ and with constant mean curvature equal to $h$;
\item $\Sigma_{(h_0,\lambda_0)}=\Sigma_0$.
\end{itemize}
Moreover, the family $\Sigma_{(h,\lambda)}$ is unique modulo congruence, i.e., if $\lambda'$ is sufficiently close to $\lambda_0$ and $\Sigma'$ is a free boundary  hypersurface of $(M,g_{\lambda'})$ sufficiently close to $\Sigma_{0}$, with  mean curvature function $h'\in\mathds R$ sufficiently $C^0$-close to the (constant) function $h_0$, then $\Sigma'$ is congruent to $\Sigma_{(h',\lambda')}$.
\end{theorem}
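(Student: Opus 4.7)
The plan is to combine the Implicit Function Theorem on the Banach manifold $\mathcal N$ of Proposition~\ref{prop:normhyperman} with the Killing flux identity of Corollary~\ref{thm:Htildeconstant}. Parametrize nearby admissible normal hypersurfaces of $\Sigma_0$ as $\Sigma_f$ for small $f\in\mathcal N$, introduce $r$ auxiliary real parameters $\alpha=(\alpha_1,\dots,\alpha_r)\in\R^r$, and define the augmented prescribed mean curvature map
\begin{equation*}
F(f,\alpha,h,\lambda):=H^{g_\lambda}_{\Sigma_f}+\sum_{i=1}^r\alpha_i\,\widetilde\psi_i^{\,\lambda}(f)-h\in C^{0,\alpha}(\Sigma),
\end{equation*}
where $\widetilde\psi_i^{\,\lambda}(f)(p):=g_\lambda\bigl(K_i^\lambda(x_f(p)),\vec n^{\,g_\lambda}_{\Sigma_f}(x_f(p))\bigr)$ is the Killing--Jacobi function of $K_i^\lambda$ along $\Sigma_f$, pulled back to $\Sigma$. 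Smooth dependence of $g_\lambda$ and $K_i^\lambda$ on $\lambda$, together with smoothness of $f\mapsto\Sigma_f$, makes $F$ a smooth map between Banach manifolds with $F(0,0,h_0,\lambda_0)=0$.

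\textbf{Isomorphism via slicing and IFT.} Using \eqref{eq:Jacobionfunctions} and \eqref{eq:defKillJac}, the partial differential $D_{(f,\alpha)}F$ at $(0,0,h_0,\lambda_0)$ sends $(\dot f,\dot\alpha)\in C^{2,\alpha}_\partial(\Sigma)\times\R^r$ to $J_{\Sigma_0}(\dot f)+\sum_i\dot\alpha_i\,\psi_i^{\lambda_0}$. The restriction $J_{\Sigma_0}\colon C^{2,\alpha}_\partial(\Sigma)\to C^{0,\alpha}(\Sigma)$ is self-adjoint Fredholm of index $0$ (the boundary condition \eqref{eqn:BC} is of Robin type); equivariant nondegeneracy identifies its kernel with $V:=\mathrm{span}\{\psi_1^{\lambda_0},\dots,\psi_r^{\lambda_0}\}$, and self-adjointness gives $\mathrm{Im}\,J_{\Sigma_0}=V^\perp\subset C^{0,\alpha}(\Sigma)$. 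Linear independence of the $\psi_i^{\lambda_0}$ makes the Gram matrix nondegenerate, so $V\cap V^\perp=\{0\}$ and $C^{0,\alpha}(\Sigma)=V\oplus V^\perp$. The kernel of $D_{(f,\alpha)}F$ is therefore $V\times\{0\}$, reflecting the infinitesimal action of the pseudo-group of local isometries generated by $K_1^{\lambda_0},\dots,K_r^{\lambda_0}$. By the second part of Proposition~\ref{prop:normhyperman}, this orbit is a smooth $r$-dimensional submanifold of $\mathcal N$ tangent to $V$, so one can fix a smooth slice $\mathcal S\subset\mathcal N$ through $0$ with $T_0\mathcal S\oplus V=C^{2,\alpha}_\partial(\Sigma)$. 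The restriction of $D_{(f,\alpha)}F$ to $T_0\mathcal S\times\R^r$ is then an isomorphism onto $V^\perp\oplus V=C^{0,\alpha}(\Sigma)$, and the Implicit Function Theorem yields a smooth map $(h,\lambda)\mapsto\bigl(f(h,\lambda),\alpha(h,\lambda)\bigr)$ on a neighborhood $\mathcal U$ of $(h_0,\lambda_0)$ solving $F=0$ uniquely within $\mathcal S\times\R^r$.

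\textbf{Killing flux kills $\alpha$; uniqueness.} To upgrade this to an honest CMC family, I invoke Corollary~\ref{thm:Htildeconstant}: $\Sigma_{f(h,\lambda)}$ remains admissible, normal, and bounds a finite volume by Remark~\ref{rem:stability}, while the Killing--Jacobi fields $\widetilde\psi_i^{\,\lambda}(f(h,\lambda))$ stay linearly independent by smoothness. Since $F=0$ and $h$ is constant, Corollary~\ref{thm:Htildeconstant} forces $\alpha_i(h,\lambda)\equiv 0$ and $H^{g_\lambda}_{\Sigma_{f(h,\lambda)}}\equiv h$. Setting $\Sigma_{(h,\lambda)}:=\Sigma_{f(h,\lambda)}$ gives the desired family. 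For uniqueness modulo congruence, any free boundary hypersurface $\Sigma'$ of $(M,g_{\lambda'})$ sufficiently close to $\Sigma_0$ with constant mean curvature $h'$ close to $h_0$ is admissible and normal (Remark~\ref{rem:stability}), hence equals $\Sigma_{f'}$ for some small $f'\in\mathcal N$. Using the local action of the pseudo-group of isometries of $g_{\lambda'}$ on $\mathcal N$, one may move $\Sigma'$ to a congruent hypersurface $\Sigma_{f''}$ with $f''\in\mathcal S$; since the CMC property is preserved by isometries, $(f'',0)$ solves $F(\,\cdot\,,\,\cdot\,,h',\lambda')=0$ inside $\mathcal S\times\R^r$, and IFT uniqueness forces $f''=f(h',\lambda')$, so $\Sigma'$ is congruent to $\Sigma_{(h',\lambda')}$.

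\textbf{Main obstacle.} The principal technical difficulty lies in coherently handling the joint $(f,\lambda)$-dependence of the pulled-back Killing--Jacobi fields $\widetilde\psi_i^{\,\lambda}(f)$---where the ambient metric, the Killing fields, the hypersurface, and its unit normal all vary simultaneously---and in arranging the slice $\mathcal S$ inside $\mathcal N$ so that it stays transverse to the $\lambda$-varying pseudo-group orbit on a common neighborhood of $(h_0,\lambda_0)$. Once these smoothness and transversality checks are in place, the analytic core---the identification of $\ker J_{\Sigma_0}$ with $V$ via equivariant nondegeneracy, combined with the Kapouleas-type flux identity of Corollary~\ref{thm:Htildeconstant}---does all the heavy lifting, turning the degenerate Jacobi operator into a bona fide isomorphism after slicing and absorbing the $\alpha$-parameters.
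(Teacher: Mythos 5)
Your proposal follows the same strategy as the paper: augment the prescribed mean curvature equation with $r$ auxiliary parameters $\alpha_i$ paired with the Killing--Jacobi generators, apply an Implicit Function Theorem to the resulting map, then use the flux identity (Corollary~\ref{thm:Htildeconstant}, itself relying on Proposition~\ref{thm:integralidentities}) to force $\alpha\equiv 0$ once $h$ is constant. The existence part of your argument is sound; the paper phrases this as ``$\mathcal H$ is a submersion, hence admits a local section'' rather than explicitly choosing a transversal slice $\mathcal S$, but this is only a cosmetic difference. Your verification that the partial differential in $(f,\alpha)$ is an isomorphism onto $C^{0,\alpha}(\Sigma)$ after restricting $f$ to $T_0\mathcal S$, using $L^2$-symmetry, Fredholmness of index zero, and equivariant nondegeneracy, matches the paper's Claim~5.2.

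There is, however, a genuine gap in your uniqueness argument. You assert: ``Using the local action of the pseudo-group of isometries of $g_{\lambda'}$ on $\mathcal N$, one may move $\Sigma'$ to a congruent hypersurface $\Sigma_{f''}$ with $f''\in\mathcal S$.'' This is a slice-theorem-type statement, but it does not follow for free: as Proposition~\ref{prop:normhyperman} emphasizes, the pseudo-group action on $\mathcal N$ is \emph{only continuous}, not smooth (a consequence of the loss of regularity under composition in $C^{2,\alpha}$). Therefore you cannot invoke the smooth slice theorem, nor can you argue that the map $(g,s)\mapsto g\cdot s$ from (group) $\times\,\mathcal S$ to $\mathcal N$ is a local diffeomorphism. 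What is available (via \cite{AliPic}) is only that the orbit through a \emph{smooth} $f$ is a smooth submanifold. The paper sidesteps this entirely with a dimension count: the orbit of $f(h,\lambda)$ lies in $\mathcal H^{-1}(h,\lambda)$; the latter is $r$-dimensional; the orbit has dimension $\geq r$, hence exactly $r$; hence the orbit is open in $\mathcal H^{-1}(h,\lambda)$ near $f(h,\lambda)$; and any sufficiently close CMC $\Sigma'$ corresponds to a point of $\mathcal H^{-1}(h,\lambda)$ near $f(h,\lambda)$, hence lies in the orbit. You should replace your move-to-slice step with this argument (or an equivalent one); as written, the step you need is precisely what the paper's dimension-counting was designed to prove, so borrowing the conclusion without proof is circular.

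One smaller remark: you write that by ``the second part of Proposition~\ref{prop:normhyperman}, this orbit is a smooth $r$-dimensional submanifold of $\mathcal N$ tangent to $V$.'' The proposition only guarantees dimension $\geq r$ and that the tangent space \emph{contains} the $\psi_i$. That the tangent space equals $V$ (and so the dimension is exactly $r$) does hold here, but you should say why: every Killing field $K$ in the pseudo-Lie algebra yields a Killing--Jacobi field $\psi_K$, and by the hypothesis that $\{\psi_i^{\lambda_0}\}$ is a \emph{basis} of the Killing--Jacobi fields, every such $\psi_K$ already lies in $V$.
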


\begin{proof}
The strategy is to use an Implicit Function Theorem on the space of unparametrized normal embeddings\footnote{Since $g_\lambda$ define the same $T(\partial M)^\perp$, the notion of normal embedding is independent on $\lambda$.} $\mathcal E_\partial^\perp(\Sigma,M)$ discussed in Section~\ref{sec:normalhypersurfaces}, together with a gauge argument using the flux of Killing fields discussed in Section~\ref{sec:killingflux}.

Denote by $x_0\colon\Sigma\hookrightarrow M$ the smooth embedding with image $\Sigma_0$, and by $\vec n_0$ the inward (pointing to the region bounded by $\Sigma_0$) unit normal along $\Sigma_0\subset (M,g_{\lambda_0})$.
A sufficiently $C^{2,\alpha}$-small neighborhood of $[x_0]$ in $\mathcal E_\partial^\perp(\Sigma,M)$ is identified with a smooth submanifold
$\mathcal N \subset C^{2,\alpha}(\Sigma)$, as in Proposition~\ref{prop:normhyperman}. For $f\in\mathcal N$, we denote by $x_f\colon\Sigma\hookrightarrow M$ the corresponding embedding and by $\Sigma_f$ its image.
Furthermore, let $\vec n^f_\lambda$ be the inward unit normal along $\Sigma_f\subset(M,g_\lambda)$, and $H^f_\lambda$ be the mean curvature function of $\Sigma_f\subset(M,g_\lambda)$.

Consider the smooth function
\begin{equation}\label{eq:deftildeH}
\begin{aligned}
\mathcal H\colon\mathcal N\times\R^r\times\Lambda&\longrightarrow C^{0,\alpha}(\Sigma)\times\Lambda\\
\mathcal H(f,\alpha,\lambda)&:=\left(H^f_\lambda+\sum_{i=1}^r\alpha_i\,g_\lambda(K^i_\lambda,\vec n^f_\lambda),\lambda\right),
\end{aligned}
\end{equation}
where $K^i_\lambda$, $i=1,\dots,r$, are the local Killing vector fields in $(M,g_\lambda)$ that extend the Killing-Jacobi fields along $\Sigma_0$.

\begin{claim}\label{claim:Hsubm}
 $\mathcal H$ is a submersion near $(0,0,\lambda_0)\in\mathcal N\times\R^r\times\Lambda$.
\end{claim}

Differentiating \eqref{eq:deftildeH} at $(0,0,\lambda_0)$ in the direction $(\psi,\beta,\mu)$, with
$\psi\in C^{2,\alpha}_\partial(\Sigma)$, $\beta\in\R^r$ and $\mu\in T_{\lambda_0}\Lambda$, gives
\begin{equation}\label{eq:derHtilde}
\mathrm d\mathcal H(0,0,\lambda_0)[\psi,\beta,\mu]=\left(J_{\lambda_0}\psi+\sum_{i=1}^r\beta_i\,g_{\lambda_0}\big(K^i_{\lambda_0},\vec n^0_{\lambda_0}\big)+L_{\lambda_0}\mu\,,\,\mu\right)\!,
\end{equation}
where $J_{\lambda_0}$ is the Jacobi operator of the CMC hypersurface $\Sigma_0\subset (M,g_{\lambda_0})$, and
\begin{equation}\label{eq:linearizelambda}
L_{\lambda_0}\colon T_{\lambda_0}\Lambda\longrightarrow C^{0,\alpha}(\Sigma)
\end{equation}
is a linear operator given by the derivative of the map $\Lambda\ni\lambda\mapsto H^0_\lambda\in C^{0,\alpha}(\Sigma)$ at $\lambda_0$. Note  that the linear map
\[C^{2,\alpha}_\partial(\Sigma)\times\R^r\ni (\psi,\beta)\longmapsto J_{\lambda_0}\psi+\sum_{i=1}^r\beta_i\,g_{\lambda_0}\big(K^i_{\lambda_0},\vec n^0_{\lambda_0}\big)\in
C^{2,\alpha}(\Sigma)\]
is surjective. This follows  from the fact that $J_{\lambda_0}\colon C^{2,\alpha}_\partial(\Sigma)\to C^{0,\alpha}(\Sigma)$ is
an $L^2$-symmetric Fredholm operator of index $0$,
and the functions $g_{\lambda_0}\big(K^i_{\lambda_0},\vec n^0_{\lambda_0}\big)$ span its kernel. From \eqref{eq:derHtilde}, this implies that $\mathrm d\mathcal H(0,0,\lambda_0)$ is surjective.
It also follows readily from \eqref{eq:derHtilde} that:
\[\ker\dd\mathcal H(0,0,\lambda_0)=\ker J_{\lambda_0}\times\{0\}\times\{0\},\]
which is a subspace of dimension $r$
in $C^{2,\alpha}_\partial(\Sigma)\times\R^r\times T_{\lambda_0}\Lambda$.
This completes the proof of Claim~\ref{claim:Hsubm}.

Since submersions admit local sections, there exists a neighborhood $\widetilde{\mathcal U}$ of $(h_0,\lambda_0)$ in
$C^{0,\alpha}(\Sigma) \times \Lambda$
 and a smooth map $\widetilde{\mathcal U} \ni (h,\lambda)\mapsto\big(f(h,\lambda),\alpha(h,\lambda),\lambda\big)\in\mathcal N\times\R^r\times\Lambda$ such that
\begin{itemize}
\item[(a)] $f(h_0,\lambda_0)=0$
\item[(b)] $\mathcal H\big(f(h,\lambda),\alpha(h,\lambda),\lambda\big)=(h,\lambda)$, for all $(h,\lambda)\in\mathcal U$.
\end{itemize}
Now, define 
 $\mathcal U=\widetilde{\mathcal U}\cap(\mathds R\times\Lambda)=\big\{(h,\lambda)\in\widetilde{\mathcal U}:\text{$h$ is constant}\big\}$.
Since $\Sigma_0$ is an admissible hypersurface that bounds a finite volume, if $\widetilde{\mathcal U}$ is sufficiently small
we can assume that $\Sigma_{f(h,\lambda)}$ is also an admissible hypersurface that bounds a finite volume for all $(h,\lambda)\in\widetilde{\mathcal U}$, see Remark~\ref{rem:stability}.
By Corollary~\ref{thm:Htildeconstant} applied to $\Sigma_{f(h,\lambda)}$,
when $(h,\lambda)\in\mathcal U$, then $\mathcal H\big(f(h,\lambda),\alpha(h,\lambda),\lambda\big)=(h,\lambda)$ implies $\alpha(h,\lambda)=0$, and so $H^f_\lambda  = h$.
The desired smooth family is then  given by $\Sigma_{(h,\lambda)}:=x_{f(h,\lambda)}(\Sigma)$.

As to the uniqueness modulo congruence,
 we first recall that for $(h, \lambda) \in \mathcal U$,
 $\mathcal H^{-1}(h,\lambda)$ is identified with the set of $g_\lambda$-free boundary  hypersurfaces of $M$ near $\Sigma_0$ that are diffeomorphic to $\Sigma_0$ and with constant mean curvature equal to $h$, while the $f(h,\lambda)$-orbit is identified with the set of hypersurfaces of $M$ near $\Sigma_0$ that are congruent to $\Sigma_{(h,\lambda)}$. We shall obtain a proof of uniqueness modulo congruence by showing that these two sets coincide near $\Sigma_{(h,\lambda)}$.

Observe that for $(h,\lambda)$ near $(h_0,\lambda_0)$, the inverse image $\mathcal H^{-1}(h,\lambda)$ is a smooth submanifold of $\mathcal N\times\{0\}\times\{\lambda\}$ with dimension $r=\dim\ker\dd\mathcal H(0,0,\lambda_0)$.
On the other hand, for $(h,\lambda)$ sufficiently close to $(h_0,\lambda_0)$,
the functions $\psi^i_\lambda=g_\lambda\big(K^i_\lambda,\vec n^{f(\lambda,h)}_\lambda\big)$ are linearly independent by continuity. This implies that the $f(h,\lambda)$-orbit of the local action on $\mathcal N$ by the pseudo-group of local $g_\lambda$-isometries generated by the $K^i_\lambda$ is a smooth manifold of dimension $\geq r$, see Proposition~\ref{prop:normhyperman}. Clearly, if $f'$ belongs to this orbit, then $(f',0,\lambda)\in\mathcal H^{-1}(h,\lambda)$.
This implies that the $f(h,\lambda)$-orbit has in fact dimension equal to $r$, and that a sufficiently small neighborhood of $\big(f(h,\lambda),0,\lambda\big)$ in $\mathcal H^{-1}(h,\lambda)$ contains only elements of the
$f(h,\lambda)$-orbit. In other words, any free boundary CMC hypersurface $\Sigma'$, with mean curvature equal to $h$, and sufficiently close to $\Sigma_0$, must be congruent to $\Sigma_{(h,\lambda)}$.
\end{proof}

\section{Foliations by CMC hypersurfaces}\label{sec:foliation}
A direct consequence of Theorem~\ref{thm:main} is that an equivariantly nondegenerate free boundary
CMC hypersurface $\Sigma_0$ with mean curvature $h_0$ in $(M,g_{\lambda_0})$ belongs to a $1$-parameter family of free boundary CMC hypersurfaces $\{\Sigma_r\}_{r\in(-\varepsilon,\varepsilon)}$, with mean curvature $h_0+r$. A very natural geometric question is to determine whether a tubular neighborhood of $\Sigma_0$ in $(M,g_{\lambda_0})$ has a \emph{foliation} by free boundary CMC hypersurfaces. In particular, since $\Sigma_r$ is unique modulo congruence, this amounts to determining whether there exist ambient isometries $I_r$ of $(M,g_{\lambda_0})$, with $I_0=\id$, such that
$\{I_r(\Sigma_r)\}_{r\in(-\varepsilon,\varepsilon)}$ is a foliation.

Foliations by CMC hypersurfaces are important geometric objects, with deep ramifications in mathematical physics, see \cite{hy,ye-gr}. Deep contributions regarding the problem of foliating a tubular neighborhood of a submanifold with CMC hypersurfaces have been given by Ye~\cite{ye} and Mazzeo and Pacard~\cite{mp-foliations}. In the former, it is proved that the foliation by geodesic spheres of small radius with center at a nondegenerate critical point of the scalar curvature function can be perturbed to a foliation by CMC spheres. The latter considered simple closed nondegenerate geodesics, proving that a tubular neighborhood can be \emph{partially foliated} with CMC hypersurfaces obtained by perturbing tubes around the geodesic. The main difficulty in obtaining an actual foliation in this case, as well as in higher dimensions, is related to a bifurcation phenomena as the CMC hypersurfaces collapse, see \cite{bp,mmp}.

\subsection{Nondegenerate \texorpdfstring{$\pmb\Sigma_{\pmb0}$}{hypersurface}}
Initially, let us make the simplifying assumption that $\Sigma_0$ is actually nondegenerate, i.e., there are no nonzero Jacobi fields $\psi$ along $\Sigma_0$ satisfying \eqref{eq:linearizedfreebdy}. In this situation,
the Jacobi operator $J_0\colon C^{2,\alpha}_\partial(\Sigma_0)\to C^{0,\alpha}(\Sigma_0)$ is an isomorphism, hence there
exists a unique $\psi\in C^{2,\alpha}_\partial(\Sigma_0)$ satisfying
\begin{equation}\label{eq:Lpsi=1}
J_0(\psi)\equiv1.
\end{equation}
By elliptic regularity, such a function $\psi$ is smooth.

\begin{proposition}\label{prop:Lpsi=1}
If the solution $\psi$ to \eqref{eq:Lpsi=1} does not have zeros in $\Sigma_0$, then the $1$-parameter
family $\{\Sigma_r\}_{r\in(-\varepsilon,\varepsilon)}$ above is a foliation of a neighborhood of $\Sigma_0$.
\end{proposition}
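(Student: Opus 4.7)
\textbf{Proof plan for Proposition~\ref{prop:Lpsi=1}.}
The plan is to show that the smooth family $\Sigma_r = x_{f(r)}(\Sigma_0)$ provided by Theorem~\ref{thm:main} (with $\lambda\equiv\lambda_0$ and $h = h_0+r$) sweeps out a tubular neighborhood of $\Sigma_0$ injectively. The key observation is that the first-order normal velocity of this family equals the prescribed solution $\psi$, so the nonvanishing hypothesis directly translates into the transversality needed to apply the inverse function theorem.

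First, I would identify the initial velocity. Since $\mathcal{H}(f(r),0,\lambda_0)=(h_0+r,\lambda_0)$ for all $r\in(-\varepsilon,\varepsilon)$ by construction, differentiating at $r=0$ and using formula \eqref{eq:Jacobionfunctions} (applied to the variational vector field $V = f'(0)\,\vec n_0$) yields
\begin{equation*}
J_0\bigl(f'(0)\bigr) = \tfrac{\dd}{\dd r}\big\vert_{r=0}(h_0+r) = 1.
\end{equation*}
Moreover, $f'(0)\in T_0\mathcal{N} = C^{2,\alpha}_\partial(\Sigma_0)$ by Proposition~\ref{prop:normhyperman}. Since $J_0\colon C^{2,\alpha}_\partial(\Sigma_0)\to C^{0,\alpha}(\Sigma_0)$ is an isomorphism by the nondegeneracy assumption, we conclude that $f'(0)=\psi$, the unique solution of \eqref{eq:Lpsi=1}.

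Next, I would apply the inverse function theorem to the evaluation map
\begin{equation*}
F\colon\Sigma_0\times(-\varepsilon,\varepsilon)\longrightarrow M,\qquad F(p,r):=\exp_{x_0(p)}\bigl(f(r)(p)\,\vec n_0(p)\bigr),
\end{equation*}
whose $r$-slice is precisely $\Sigma_r$. At $r=0$, the differential acts by
\begin{equation*}
\dd F_{(p,0)}(v,\tau) = \dd x_0(p)\,v + \tau\,\psi(p)\,\vec n_0(p),\qquad (v,\tau)\in T_p\Sigma_0\times\R.
\end{equation*}
By hypothesis $\psi(p)\neq 0$ at every $p\in\Sigma_0$, so the tangential image $T_{x_0(p)}\Sigma_0$ and the transversal direction $\psi(p)\vec n_0(p)$ together span $T_{x_0(p)}M$; hence $\dd F_{(p,0)}$ is an isomorphism. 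Combined with the compactness of $\Sigma_0$, a standard tubular-neighborhood argument shows that, after possibly shrinking $\varepsilon$, the map $F$ is a diffeomorphism onto an open neighborhood of $\Sigma_0$ in $M$. The level sets $F(\cdot,r)=\Sigma_r$ then form the desired foliation, and the boundary condition $\psi\in C^{2,\alpha}_\partial(\Sigma_0)$ together with the normality of each $\Sigma_r$ guarantees that the foliation respects $\partial M$.

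The only genuinely substantive point is identifying $f'(0)$ with $\psi$; once that is done, the foliation property is an immediate consequence of the open-mapping/tubular-neighborhood argument enabled by the nonvanishing hypothesis. No difficulty is expected beyond carefully differentiating the implicit solution $f(r)$ produced by Theorem~\ref{thm:main} and verifying that this first variation indeed satisfies the free boundary Jacobi problem \eqref{eq:Lpsi=1}.
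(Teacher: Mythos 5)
Your proposal is correct and follows essentially the same route as the paper's proof: identify the initial normal velocity of the family $\Sigma_r$ with the unique solution $\psi$ of $J_0\psi=1$ (via the linearization of $H_{\Sigma_r}=h_0+r$ and nondegeneracy), and then use the nonvanishing of $\psi$ plus compactness of $\Sigma_0$ to show that the sweep map $(p,r)\mapsto\exp_{x_0(p)}\bigl(f(r)(p)\,\vec n_0(p)\bigr)$ is a diffeomorphism onto a neighborhood, so the slices $\Sigma_r$ foliate. The only cosmetic difference is that the paper factors this map through the normal exponential tubular neighborhood and the fiberwise diffeomorphism $(p,r)\mapsto(p,\phi_r(p))$, whereas you compute $\dd F_{(p,0)}$ directly; both rest on the same transversality coming from $\psi\neq 0$.
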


\begin{proof}
Let $\phi_r\in C^{2,\alpha}_\partial(\Sigma_0)$ be such that $\Sigma_r=\exp(\phi_r\, \vec n_{\Sigma_0})$; in particular, $\phi_0=0$.
Consider the induced variational field along $\Sigma_0$, given by
$V=\big(\frac{\dd}{\dd r}\big\vert_{r=0}\phi_r\big)\,\vec n_{\Sigma_0}$. Moreover,
let $\psi_V:=g(V,\vec n_{\Sigma_0})=\frac{\dd}{\dd r}\big\vert_{r=0}\phi_r$. Then,
\[J_0(\psi_V)=\tfrac{\mathrm d}{\mathrm dr}\big\vert_{r=0}(h_0+r)=1.\]
By uniqueness, $\psi_V=\psi$. Since $\Sigma_0$ is embedded,
$\Sigma_0\times\R\ni(p,t)\mapsto\exp_p\big(t\,\vec n_{\Sigma_0}(p)\big)\in M$ gives a diffeomorphism
from a neighborhood of $\Sigma_0\times\{0\}$ in $\Sigma_0\times\R$ onto a neighborhood of $\Sigma_0$ in $M$.
By composition, the map $(p,r)\mapsto\exp_p\big(\phi(p)\,\vec n_{\Sigma_0}(p)\big)$ is a diffeomorphism from
$\Sigma_0\times(-\varepsilon,\varepsilon)$ onto a neighborhood $U$ of $\Sigma_0$ in $M$, for $\varepsilon>0$
sufficiently small.\footnote{The assumption that $\psi=\frac{\dd}{\dd r}\big\vert_{r=0}\phi_r$ does not vanish
on $\Sigma_0$ and compactness of $\Sigma_0$ imply that the map
$\Sigma_0\times\mathds R\ni(p,r)\mapsto\big(p,\phi_r(p)\big)\in\Sigma_0\times\mathds R$
is a diffeomorphism between two neighborhoods of $\Sigma_0\times\{0\}$ in $\Sigma_0\times\R$.}
Under this diffeomorphism, the hypersurfaces $\Sigma_r$ correspond to the slices $\Sigma_0\times\{r\}$,
which form a foliation of $U$.
\end{proof}

\subsection{Equivariantly nondegenerate \texorpdfstring{$\pmb\Sigma_{\pmb0}$}{hypersurface}}
Let us now consider the more general case in which $\Sigma_0$ is equivariantly  nondegenerate (recall Definition~\ref{def:equivnondeg}). By Proposition~\ref{thm:integralidentities}, the constant function $1$ is $L^2$-orthogonal to every Killing-Jacobi field, and therefore belongs to the image of the Jacobi operator $J_0$. In particular, equation \eqref{eq:Lpsi=1} has solutions in $C^{2,\alpha}_\partial(\Sigma_0)$.

\begin{proposition}\label{prop:foliatedeqnondeg}
If  there exists a nonvanishing solution of  \eqref{eq:Lpsi=1},
then there exists a neighborhood of $\Sigma_0$ foliated by a smooth $1$-parameter family of
free boundary CMC hypersurfaces $\{I_r(\Sigma_r)\}_{r\in (-\varepsilon,\varepsilon)}$, as described above.
\end{proposition}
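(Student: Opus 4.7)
The plan is to reduce to Proposition~\ref{prop:Lpsi=1} by correcting the family $\{\Sigma_r\}$ with a flow of Killing fields so that its variational field becomes the given nonvanishing solution of $J_0(\psi)\equiv 1$. Let $\psi_1,\ldots,\psi_r$ be the Killing-Jacobi fields along $\Sigma_0$ induced by Killing fields $K_1,\ldots,K_r$ of $(M,g_{\lambda_0})$; by equivariant nondegeneracy, these form a basis of $\ker J_0\cap C^{2,\alpha}_\partial(\Sigma_0)$. As in the proof of Proposition~\ref{prop:Lpsi=1}, write $\Sigma_r=\exp_{\Sigma_0}(\phi_r\,\vec n_{\Sigma_0})$ with $\phi_0=0$, and set $\psi_V:=\tfrac{\dd}{\dd r}\big\vert_{r=0}\phi_r\in C^{2,\alpha}_\partial(\Sigma_0)$. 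The same calculation as before gives $J_0(\psi_V)=1$.

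Let $\psi$ be the nonvanishing solution of \eqref{eq:Lpsi=1} provided by the hypothesis. Since $J_0(\psi-\psi_V)=0$ and the kernel of $J_0$ is spanned by Killing-Jacobi fields, there exist constants $c_1,\ldots,c_r\in\R$ with
\[\psi=\psi_V+\sum_{i=1}^r c_i\,\psi_i.\]
Let $K:=\sum_{i=1}^r c_i K_i$, and let $\{I_r\}$ denote the (local) flow of $K$ on $M$. Each $I_r$ is a local isometry of $(M,g_{\lambda_0})$ preserving $\partial M$, so $\widetilde\Sigma_r:=I_r(\Sigma_r)$ is again a free boundary CMC hypersurface with mean curvature $h_0+r$, and $\widetilde\Sigma_0=\Sigma_0$.

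The variational field of the family $\widetilde\Sigma_r$ at $r=0$ is $\widetilde V=V+K\big\vert_{\Sigma_0}$, whose normal component on $\Sigma_0$ equals
\[g_{\lambda_0}(\widetilde V,\vec n_{\Sigma_0})=\psi_V+\sum_{i=1}^r c_i\,g_{\lambda_0}(K_i,\vec n_{\Sigma_0})=\psi_V+\sum_{i=1}^r c_i\psi_i=\psi,\]
which is nowhere zero on $\Sigma_0$ by assumption. For $r$ sufficiently small, $\widetilde\Sigma_r$ is a graph $\exp_{\Sigma_0}(\widetilde\phi_r\,\vec n_{\Sigma_0})$ with $\widetilde\phi_r\in C^{2,\alpha}_\partial(\Sigma_0)$, $\widetilde\phi_0=0$ and $\tfrac{\dd}{\dd r}\big\vert_{r=0}\widetilde\phi_r=\psi$.

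Now the argument of Proposition~\ref{prop:Lpsi=1} applies verbatim to the family $\widetilde\Sigma_r$: using that $\psi$ is nowhere vanishing on the compact hypersurface $\Sigma_0$, the inverse function theorem (as in the footnote of that proof) shows that $\Sigma_0\times(-\varepsilon,\varepsilon)\ni(p,r)\mapsto\exp_p\big(\widetilde\phi_r(p)\,\vec n_{\Sigma_0}(p)\big)\in M$ is a diffeomorphism onto a neighborhood of $\Sigma_0$, under which the slices $\Sigma_0\times\{r\}$ correspond to $\widetilde\Sigma_r=I_r(\Sigma_r)$. Hence $\{I_r(\Sigma_r)\}_{r\in(-\varepsilon,\varepsilon)}$ is the desired foliation. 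The only conceptual point is the gauge correction by $K$: there is no hard analysis beyond Proposition~\ref{prop:Lpsi=1}, and I expect no real obstacle once one recognizes that the indeterminacy in choosing a solution to $J_0(\psi)=1$ is exactly absorbed by reparametrizing via the Killing flows that already act on the moduli of free boundary CMC deformations.
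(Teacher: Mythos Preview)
Your proof is correct and follows essentially the same approach as the paper: both identify the discrepancy between the given nonvanishing solution $\psi$ and the variational field $\psi_V$ of the family $\{\Sigma_r\}$ as a Killing--Jacobi field, correct the family by composing with the flow of the corresponding Killing field, and then reduce to the argument of Proposition~\ref{prop:Lpsi=1}. The only cosmetic issue is a notational clash---you use $r$ both for the number of Killing--Jacobi fields and for the deformation parameter---and the paper is slightly more explicit in invoking Proposition~\ref{prop:normhyperman} to justify writing $I_r(\Sigma_r)$ as a normal graph over $\Sigma_0$.
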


\begin{proof}
It suffices to show that, given a nonvanishing solution $\psi$ to \eqref{eq:Lpsi=1},
there exists a smooth $1$-parameter family of maps $\varphi_r\in C^{2,\alpha}_\partial(\Sigma_0)$,
with $\varphi_0=0$ and $\frac{\dd}{\dd r}\big\vert_{r=0}\varphi_r=\psi$, such that the
hypersurface $\exp(\varphi_r\,\vec n_{\Sigma_0})$ has constant mean curvature $h_0+r$, for
$\vert r\vert$ sufficiently small. Once the existence of such family $\varphi_r$ is established,
the proof follows exactly as in the nondegenerate case (Proposition~\ref{prop:Lpsi=1}).

Applying Theorem~\ref{thm:main} with a fixed metric $g_{\lambda_0}$, we get the existence of a smooth $1$-parameter
family of maps $\phi_r\in C^{2,\alpha}_\partial(\Sigma_0)$, with $\phi_0=0$, such that
$\Sigma_r=\exp(\phi_r\,\vec n_{\Sigma_0})$ has constant mean curvature $h_0+r$, for $\vert r\vert$ sufficiently
small. Note that the function $\widetilde\psi=\frac{\dd}{\dd r}\big\vert_{r=0}\phi_r$ is another solution
to \eqref{eq:Lpsi=1}. Thus, there exists a local Killing vector field $K$ such that $\psi=\widetilde\psi+\psi_K$,
where $\psi_K$ is the Killing-Jacobi field \eqref{eq:defKillJac}. Denote by
$\{I_r\}_{r\in(-\varepsilon,\varepsilon)}$ the local flow of $K$.
Since $K$ is Killing, $I_r(\Sigma_r)$ is a smooth perturbation of $\Sigma_0$ with constant mean curvature $h_0+r$.
The corresponding variational field is
\small
\begin{equation}\label{eq:varfieldV+K}
\tfrac{\dd}{\dd r}\big\vert_{r=0}\, I_r\left(\exp(\phi_r\,\vec n_{\Sigma_0})\right)
 = \tfrac{\dd}{\dd r}\big\vert_{r=0}\Big[I_r\left(\exp(\phi_0\,\vec n_{\Sigma_0})\right)+
I_0\left(\exp(\phi_r\,\vec n_{\Sigma_0})\right)\Big] = K+V,
\end{equation}
\normalsize
where
$V=\tfrac{\dd}{\dd r}\big\vert_{r=0}\left(\exp(\phi_r\,\vec n_{\Sigma_0})\right)=
\widetilde\psi\,\vec n_{\Sigma_0}$ is the variational vector field corresponding to $\Sigma_r$.
By Proposition~\ref{prop:normhyperman}, there exists  a smooth $1$-parameter family of maps $\varphi_r\in
C^{2,\alpha}_\partial(\Sigma_0)$, with $\varphi_0=0$, such that $I_r(\Sigma_r)=\exp(\varphi_r\,\vec n_{\Sigma_0})$.
Finally, from \eqref{eq:varfieldV+K}:
\[\tfrac{\dd}{\dd r}\big\vert_{r=0}\varphi_r=g\big(K+V,\vec n_{\Sigma_0}\big)=\psi_K+\widetilde\psi=\psi.\qedhere\]
\end{proof}

\begin{remark}\label{rem:perturbfoliation}
The condition $\psi\ne0$ for solutions $\psi$ of \eqref{eq:Lpsi=1} is open in the $C^0$-topology, since $\Sigma_0$ is compact. Therefore,  if $r\mapsto\Sigma_r=\exp(\phi_r\cdot\vec n_{\Sigma_0})$ is a perturbation  of $\Sigma_0$ by free boundary CMC hypersurfaces, with $H_{\Sigma_r}=h_0+r$ for all $r$, and with $\psi:=\frac{\mathrm d}{\mathrm dr}\big\vert_{r=0}\phi_r\ne0$ on $\Sigma_0$,
then any other  perturbation $\widetilde\Sigma_r$ of $\Sigma_0$  sufficiently $C^1$-close to $\Sigma_r$ must also foliate a neighborhood of $\Sigma_0$.
\end{remark}

\section{Free boundary disks in the unit ball}\label{sec:disks}
The simplest free boundary minimal hypersurface in the unit ball $B^{n+1}\subset\R^{n+1}$ is the flat disk $D^n$ obtained by intersecting $B^{n+1}$ with a hyperplane in $\R^{n+1}$, which we may assume to be $\R^n=\{e_{n+1}\}^\perp$. In this section, as a first example, we apply Theorem~\ref{thm:main} to verify that $D^n$ can be deformed to free boundary CMC hypersurfaces inside the unit ball of other $(n+1)$-dimensional space forms. We remark that stability issues for such surfaces have been studied in \cite{ros08,ros-souam,RosVergasta,souam}.

\subsection{Equivariant nondegeneracy}
We now verify the main hypothesis needed to apply Theorem~\ref{thm:main}, regarding equivariant nondegeneracy of the flat disk. See Lemma~\ref{lemma:catenoidnondegenerate} below for a generalization to other surfaces of revolution.

We begin by noting that the space forms are conformal, and thus define the same normal bundle $T(\partial M)^\perp$.

\begin{lemma}\label{lem:disknondeg}
The flat disk $D^n$ is an equivariantly nondegenerate free boundary minimal hypersurface in $B^{n+1}$, with nullity
equal to $n$.
\end{lemma}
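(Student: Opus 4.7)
The plan is to compute the Jacobi operator and the linearized free boundary condition explicitly for $D^n$, identify the $n$-dimensional space of Killing--Jacobi fields, and then use a spherical-harmonic expansion to show that no additional Jacobi fields exist.

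First, I would normalize so that $D^n=\{x_{n+1}=0\}\cap B^{n+1}$ with unit normal $\vec n_{D^n}=e_{n+1}$. Since $D^n$ is totally geodesic ($\sff^{D^n}\equiv 0$) and $\R^{n+1}$ is Ricci-flat, the Jacobi operator \eqref{eq:Jacobiop} collapses to $J_0\psi=\Delta_{D^n}\psi$, so Jacobi fields are simply harmonic functions on the disk. For the boundary term I would use $\vec n_{\partial M}(x)=x$ on $S^n$ and $\nabla_X\vec n_{\partial M}=X$; differentiating $g(Y,\vec n_{\partial M})=0$ yields $\sff^{\partial M}(X,Y)=-g(X,Y)$, hence $\sff^{\partial M}(\vec n_{D^n},\vec n_{D^n})=-1$. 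Thus \eqref{eq:linearizedfreebdy} becomes the Robin condition $\partial_\nu\psi=\psi$ on $\partial D^n=S^{n-1}$, where $\nu$ denotes the outward radial field in $\R^n$.

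Second, I would enumerate the Killing--Jacobi fields. Every Killing field on Euclidean $\R^{n+1}$ has the form $K(x)=Ax+v$ with $A\in\mathfrak{so}(n+1)$; the paper's tangency requirement applied on an open neighborhood of $\partial D^n$ in $S^n$ gives $v\cdot x\equiv 0$ on an open subset of $S^n$, forcing $v=0$. The associated Killing--Jacobi field along $D^n$ is then $\psi_K(x)=g(Ax,e_{n+1})=\sum_{j=1}^n A_{n+1,j}\,x_j$, so the $\psi_K$ span the $n$-dimensional space $V:=\operatorname{span}\{x_1,\dots,x_n\}$. Each coordinate function $x_j$ is linear, hence harmonic, and satisfies $\partial_\nu x_j=x_j$ on $S^{n-1}$, so indeed $V\subset\ker J_0\cap C^{2,\alpha}_\partial(D^n)$.

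Finally, I would show $V$ exhausts the kernel via a spherical-harmonic expansion. Any harmonic function on $D^n$ admits the representation $\psi(r,\theta)=\sum_{k\ge0}r^k Y_k(\theta)$ with $Y_k$ a degree-$k$ spherical harmonic on $S^{n-1}$; imposing the Robin condition at $r=1$ gives $\sum_{k\ge0}(k-1)\,Y_k(\theta)=0$, which forces $Y_k\equiv0$ for every $k\ne 1$. Hence $\psi=r\,Y_1$, i.e.\ a linear combination of $x_1,\dots,x_n$, so $\ker J_0\cap C^{2,\alpha}_\partial(D^n)=V$, and equivariant nondegeneracy together with nullity $n$ follow. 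The only steps requiring care are the sign computation for $\sff^{\partial M}$ on the unit sphere and the tangency argument that kills the translational part of $K$; once these are settled, the remainder is explicit harmonic analysis.
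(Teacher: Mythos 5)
Your proof is correct and follows essentially the same route as the paper: identify the Jacobi operator as the flat Laplacian, reduce the linearized free boundary condition to the Robin condition $\partial_\nu\psi=\psi$, and then show that this forces $\psi$ to be a degree-one harmonic polynomial. The only cosmetic difference is that you expand in spherical harmonics $\sum_k r^k Y_k(\theta)$ (invoking orthogonality of the $Y_k$ on $S^{n-1}$ to split the boundary equation degree by degree), whereas the paper expands directly in homogeneous polynomials via the Taylor series and uses Euler's identity $\langle\nabla\psi_k,x\rangle=k\psi_k$; for harmonic functions on the ball these are the same decomposition, and both arguments close the gap cleanly. Your explicit tangency argument ruling out the translational part of a Euclidean Killing field is a welcome bit of extra care that the paper leaves implicit.
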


\begin{proof}
The Jacobi operator \eqref{eq:Jacobiop} of $D^n$ is simply the (nonnegative) Laplacian of the flat metric, as $D^n\subset B^{n+1}$ is totally geodesic and the ambient curvature vanishes. Moreover, the linearized free boundary condition \eqref{eq:linearizedfreebdy} reads $\langle\nabla\psi(x),x\rangle=\psi(x)$, since $\partial B^{n+1}=S^n$ is the unit sphere in $\R^{n+1}$. Altogether, Jacobi fields along $D^n$ are harmonic functions $\psi\colon D^n\to\R$ with Robin boundary conditions:
\begin{equation}\label{eq:robin}
\begin{cases}
\Delta\psi=0 & \text{in } D^n,\\
\frac{\partial\psi}{\partial \vec n}=\psi & \text{on } \partial D^n.
\end{cases}
\end{equation}
We claim that the space of solutions to \eqref{eq:robin} is spanned by the coordinate functions:
\begin{equation}\label{eq:heightfunctions}
f_i(x):=\langle x,e_i\rangle, \quad i=1,\dots,n,
\end{equation}
where $\{e_i\}$ is an orthonormal basis of $\R^n$. Indeed, expanding a solution $\psi=\sum_k \psi_k$ as a sum of homogeneous polynomials $\psi_k$ of degree $k$ (e.g., using the Taylor series at $x=0$), we have that $\frac{\partial\psi}{\partial \vec n}=\sum_k \langle \nabla\psi_k(x),x\rangle=\sum_k k \, \psi_k$, which can only be equal to $\psi$ provided $\psi_k=0$ for all $k\ne1$. In other words, $\psi$ must be a homogeneous polynomial of degree $1$, hence a linear combination of \eqref{eq:heightfunctions}. In particular, the nullity of $D^n$ is equal to $n$.

The space of Killing fields of the flat ball $B^{n+1}$ is identified with the Lie algebra $\mathfrak o(n+1)$ of the orthogonal group $\mathsf O(n+1)$, as each $A\in\mathfrak o(n+1)$ corresponds to the Killing field $K_A(x)=A\,x$, for $x\in B^{n+1}\subset\R^{n+1}$. Killing-Jacobi fields on $D^n$ are given by the restriction to $D^n$ of the linear map
\begin{equation}\label{eq:killing-jacobi-disk}
x\mapsto\langle K_A(x), e_{n+1}\rangle=\sum_{j=1}^n a_{n+1,j}\,x_j
\end{equation}
where $A=(a_{ij})$. Since the coefficients $a_{n+1,j}$, $1\leq j\leq n$, are arbitrary real numbers, there are $n$ linearly independent Killing-Jacobi fields\footnote{Geometrically, these are infinitesimal rotations of $B^{n+1}$ with rotation axis tangent to $D^n$.} along $D^n$, which is hence equivariantly nondegenerate.
\end{proof}

\begin{remark}\label{rem:noradialJacobifields}
From \eqref{eq:killing-jacobi-disk}, we observe that there are no nontrivial  rotationally invariant Jacobi fields along the $n$-disk $D^n$.
\end{remark}

\subsection{Killing fields in space forms}
In order to apply Theorem~\ref{thm:main} to deform $D^n$, we still have to verify that the above Killing fields of the flat ambient metric extend to a smooth family of Killing fields of ambient metrics with constant curvature.

Let $M^{n+1}_\lambda$ be the simply-connected space form of constant curvature $\lambda$. We denote by $B^{n+1}_\lambda$ the unit ball in $M^{n+1}_\lambda$, which is a Riemannian manifold with boundary,\footnote{For simplicity, we only consider $\lambda<\pi^2$, so that the unit ball in $M^{n+1}_\lambda$ remains a manifold with boundary. Of course, higher values of $\lambda>0$ can be achieved by rescaling; considering instead of a unit ball, a ball of fixed radius $<\frac{\pi}{\sqrt \lambda}$.}
equipped with the induced metric $g_\lambda:=\dd r^2+\snsq(r)\,\dd s^2_n$, where $\dd s^2_n$ is the standard metric on the unit sphere $S^n$ and
\begin{equation*}
\sn(r)=\begin{cases}
\frac{1}{\sqrt\lambda}\sin\big(r\sqrt\lambda\big) &\text{if } \lambda>0,\\
r & \text{if } \lambda=0,\\
\frac{1}{\sqrt{-\lambda}}\sinh\big(r\sqrt{-\lambda}\big) &\text{if } \lambda<0.
\end{cases}
\end{equation*}

\begin{lemma}\label{lemma:killingconstcurv}
Every Killing field $K_0$ of $B^{n+1}_0$ belongs to a smooth $1$-parameter family $K_\lambda$ of Killing fields of $B^{n+1}_\lambda$.
\end{lemma}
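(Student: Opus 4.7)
The strategy is to exploit that, after fixing a suitable smooth identification of the underlying manifold of each $B^{n+1}_\lambda$ with the Euclidean unit ball in $\R^{n+1}$, the rotational isometry group $\O(n+1)$ acts by isometries of every $g_\lambda$ by the \emph{same} formula. This makes the family of generating Killing fields $\lambda$-independent as vector fields on the fixed ambient manifold, hence trivially smooth.

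First, I would use the $g_\lambda$-exponential map at the center to identify the underlying manifold of $B^{n+1}_\lambda$ with the Euclidean unit ball $\{x\in\R^{n+1}:|x|\le 1\}$. Under this identification, $r=|x|$ matches the $g_\lambda$-geodesic distance to the center and the metric takes the explicit warped form $\dd r^2+\snsq(r)\,\dd s^2_n$ recalled in the excerpt; in particular, at $\lambda=0$ the identification is the identity. Because each space form has isotropy $\O(n+1)$ at any point and the exponential map is equivariant under isometries, the standard linear $\O(n+1)$-action on $\R^{n+1}$ preserves every $g_\lambda$, as one also sees directly from the warped expression (it preserves $r$ and acts by isometries of $\dd s^2_n$ on each spherical slice).

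Next, for each $A\in\mathfrak o(n+1)$ I would take $K_A^\lambda$ to be the vector field $x\mapsto Ax$, the infinitesimal generator of $t\mapsto e^{tA}$, which is therefore a Killing field of $g_\lambda$ by the preceding remark. As an expression on the fixed ambient manifold, $K_A^\lambda$ is literally independent of $\lambda$, so $\lambda\mapsto K_A^\lambda$ is automatically a smooth family, and $K_A^0=K_A$ recovers the original Euclidean Killing field. Since the Killing fields of $B^{n+1}_0$ are precisely $\{K_A:A\in\mathfrak o(n+1)\}$ (as recorded in the proof of Lemma~\ref{lem:disknondeg}), this exhibits the required smooth extension for every $K_0$.

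I do not anticipate a substantive obstacle. The only point requiring care is the choice of identification across $\lambda$, since smoothness of the family is only meaningful once the underlying manifolds have been fixed; using $g_\lambda$-normal coordinates at the center is the most natural such choice, and it is precisely what makes the $\O(n+1)$-action manifestly $\lambda$-independent. In any alternative (non-normal) coordinate model one would instead lift each Killing field of $(S^n,\dd s^2_n)$ trivially in $r$ to the warped product and check directly that the lift is $g_\lambda$-Killing and extends smoothly across the origin, which amounts to the same computation in different guise.
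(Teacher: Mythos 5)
Your argument is correct, and it is genuinely more elementary than the paper's. The paper proves the lemma by invoking the abstract framework of smooth bundles of Lie groups: it cites the fact that the full isometry groups $\mathsf G_\lambda$ of $M^{n+1}_\lambda$ form a smooth bundle of Lie groups over $\R$ with Lie algebras $\mathfrak g_\lambda\subset\mathfrak{gl}(n+1)$ varying smoothly (referencing Umehara--Yamada and \cite[Ex.\ 2.6]{BetPicSic2014}), then observes that the isometries of the ball are those fixing the origin, and that the corresponding subalgebras $\mathfrak h_\lambda=\{K\in\mathfrak g_\lambda:K(0)=0\}$ again form a smooth bundle, from which the existence of a smooth extension $\lambda\mapsto K_\lambda$ follows. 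Your proof bypasses all of this: working in $g_\lambda$-normal coordinates at the center (which is exactly the identification the paper implicitly uses when it writes $g_\lambda=\dd r^2+\snsq(r)\,\dd s^2_n$), the linear $\O(n+1)$-action preserves $r$ and each spherical slice, hence is $g_\lambda$-isometric for every $\lambda$; and since these rotations are precisely the isometries of the ball, the Killing fields $x\mapsto Ax$, $A\in\mathfrak o(n+1)$, are simultaneously all of $\mathfrak h_\lambda$ for every $\lambda$. The family is not merely smooth but \emph{constant} in this gauge. What the paper's route buys is robustness (it is the argument one would use when the Killing fields do genuinely vary with $\lambda$, as in Theorem~\ref{thm:main} for general $(M,g_\lambda)$); what your route buys is a short, self-contained, explicit verification that needs no reference to the bundle-of-Lie-groups machinery. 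You also correctly isolate the one delicate point, namely that ``smooth family'' is meaningful only after the underlying manifolds have been identified across $\lambda$, and you make the canonical choice that the paper also uses.
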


\begin{proof}
It is a well-known fact that the isometry groups $\mathsf G_\lambda$ of $M^{n+1}_\lambda$ form a smooth bundle of Lie groups over $\R$, and the corresponding Lie algebras $\mathfrak g_\lambda$ form a smooth bundle of Lie subalgebras of $\mathfrak{gl}(n+1)$ over $\R$, see e.g.\ \cite{UmeYam} for $n=2$ and \cite[Ex.\ 2.6]{BetPicSic2014} for $n\geq2$. The isometry group of the corresponding unit ball $B^{n+1}_\lambda\subset M^{n+1}_\lambda$ is the subgroup $\mathsf H_\lambda$ of $\mathsf G_\lambda$ formed by isometries that fix the origin $0\in B^{n+1}_\lambda$, and the corresponding subalgebras are $\mathfrak h_\lambda=\{K_\lambda\in\mathfrak g_\lambda:K_\lambda(0)=0\}$. As $\mathfrak h_\lambda$ is a smooth bundle of Lie subalgebras of $\mathfrak{gl}(n+1)$ over $\R$, it follows that every Killing field $K_0\in\mathfrak h_0$ of $B^{n+1}_0$ admits a smooth extension $K_\lambda\in\mathfrak h_\lambda$ to a Killing field of $B^{n+1}_\lambda$. Alternatively, the smooth dependence on $\lambda$ of solutions $K$ to $\mathcal L_K (g_\lambda)=0$ can be used to prove that the Killing field $K_0$ admits an extension $K_\lambda$.
\end{proof}

\subsection{Free boundary CMC disks}
We are now ready to apply Theorem~\ref{thm:main} to deform the flat disk $D^n\subset B^{n+1}$ in the flat unit ball through other CMC disks varying both the values of its mean curvature and of the ambient curvature.

\begin{proposition}\label{prop:deformationdisk}
The flat disk $D^n$ in $B^{n+1}_0$ is the member $\Sigma_{(0,0)}$ of a smooth $2$-parameter family of surfaces $\{\Sigma_{(h,\lambda)}\}$, where each $\Sigma_{(h,\lambda)}$ is a free boundary disk with constant mean curvature $h$ inside the unit ball $B^{n+1}_\lambda$ of constant curvature~$\lambda$.  Each $\Sigma_{(h,\lambda)}$ is invariant under the action of a subgroup of isometries of $B^{n+1}_\lambda$ isomorphic to $\mathsf O(n)$.
The family $\{\Sigma_{(h,\lambda)}\}$ is unique modulo congruence, and it is in fact uniquely determined by requiring that each $\Sigma_{(h,\lambda)}$ is invariant by rotations around the $x_{n+1}$-axis.
\end{proposition}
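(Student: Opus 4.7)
The plan is to apply Theorem~\ref{thm:main} directly and then strengthen the uniqueness modulo congruence into genuine $\mathsf O(n)$-invariance by working in the rotationally equivariant sector, where the Jacobi operator becomes an isomorphism.

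First, I would verify the hypotheses of Theorem~\ref{thm:main} with $\Sigma_0 = D^n$, $(M, g_{\lambda_0}) = B^{n+1}_0$, $h_0 = 0$, and $\lambda_0 = 0$. The flat disk is admissible and splits $B^{n+1}_0$ into two half-balls, hence bounds a finite volume. Equivariant nondegeneracy with nullity $n$ is exactly Lemma~\ref{lem:disknondeg}; the corresponding Killing-Jacobi fields originate from the $n$ infinitesimal rotations of $B^{n+1}_0$ whose axes lie inside $D^n$, and by Lemma~\ref{lemma:killingconstcurv} each such Killing field extends smoothly in $\lambda$ to a Killing field of $B^{n+1}_\lambda$. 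This provides the required smooth frame of Killing-Jacobi field generators along $D^n$ near $\lambda_0$, so Theorem~\ref{thm:main} yields the two-parameter family $\{\Sigma_{(h,\lambda)}\}$ and its uniqueness modulo congruence.

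Next, to upgrade this to $\mathsf O(n)$-invariance, let $G \subset \mathsf O(n+1)$ denote the stabilizer of the $x_{n+1}$-axis, which acts by $g_\lambda$-isometries on $B^{n+1}_\lambda$ for every $\lambda$ since $g_\lambda = \dd r^2 + \snsq(r)\,\dd s^2_n$, and which preserves $D^n$. I would redo the implicit function theorem step of Theorem~\ref{thm:main} restricted to $G$-invariant data. By Remark~\ref{rem:noradialJacobifields}, no nonzero Jacobi field along $D^n$ is rotationally invariant, so the restriction $J_0\colon C^{2,\alpha}_\partial(D^n)^G \to C^{0,\alpha}(D^n)^G$ has trivial kernel, hence is an isomorphism. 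Moreover, the Killing-Jacobi fields \eqref{eq:killing-jacobi-disk} are manifestly non-$G$-invariant, so no Killing gauge correction is needed in the equivariant sector, and a direct implicit function theorem applied to the $G$-equivariant restriction of $\mathcal H$ produces a smooth family $\widetilde{\Sigma}_{(h,\lambda)}$ of $G$-invariant free boundary CMC disks, uniquely determined within the $G$-invariant class.

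By the uniqueness modulo congruence from Theorem~\ref{thm:main}, each $\widetilde{\Sigma}_{(h,\lambda)}$ is congruent to $\Sigma_{(h,\lambda)}$, so taking $\Sigma_{(h,\lambda)} := \widetilde{\Sigma}_{(h,\lambda)}$ ensures that every member of the family is $G$-invariant, and the uniqueness characterization by rotational invariance about the $x_{n+1}$-axis follows from the isomorphism property of $J_0$ on the $G$-sector. The step I expect to require the most attention is verifying that the submanifold $\mathcal N \subset C^{2,\alpha}(\Sigma)$ from Proposition~\ref{prop:normhyperman} is $G$-stable with $\mathcal N^G$ again a smooth submanifold, so that the equivariant implicit function theorem applies verbatim; this should be essentially automatic since $G$ acts by isometries commuting with the normal exponential and preserving mean curvatures, but it deserves to be recorded carefully.
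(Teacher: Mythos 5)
Your proposal is correct and follows essentially the same approach as the paper: apply Theorem~\ref{thm:main} using Lemmas~\ref{lem:disknondeg} and \ref{lemma:killingconstcurv} for existence and uniqueness modulo congruence, then run the standard Implicit Function Theorem in the space of $\mathsf O(n)$-invariant (equivalently, rotationally symmetric) embeddings, where nondegeneracy holds by Remark~\ref{rem:noradialJacobifields}, and finally identify the two families by the uniqueness-modulo-congruence clause. Your extra remark about the $G$-stability of $\mathcal N$ and the fact that the Killing-Jacobi fields are non-$G$-invariant makes explicit what the paper leaves implicit, but it is the same argument.
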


\begin{proof}
From Lemma~\ref{lem:disknondeg}, the flat disk $D^n\subset B^{n+1}_0$ is equivariantly nondegenerate. By Lemma~\ref{lemma:killingconstcurv}, every Killing field of $B^{n+1}_0$ belongs to a smooth $1$-parameter family of Killing fields of $B^{n+1}_\lambda$. Thus, the existence of $\Sigma_{(h,\lambda)}$ and its uniqueness modulo congruence follow from Theorem~\ref{thm:main}. In order to prove that $\Sigma_{(h,\lambda)}$ have the claimed symmetries, one applies the Implicit Function Theorem in the space of (unparametrized) embeddings into
$B^{n+1}_\lambda$ that are invariant under the action of the subgroup of isometries of $B^{n+1}_\lambda$ that fix the line spanned by $e_{n+1}$.
This subgroup is isomorphic to the subgroup $\mathsf O(n)$ of linear isometries of the hyperplane
$\R^n=\{e_{n+1}\}^\perp$. The flat disk is nondegenerate in this space,
as there are no non-trivial radial Jacobi fields along $D^n$, see Remark~\ref{rem:noradialJacobifields}.
This yields the existence of a smooth family $\Sigma'_{(h,\lambda)}$ with these symmetries, which must be congruent to $\Sigma_{(h,\lambda)}$ by uniqueness.
\end{proof}

\begin{remark}\label{rem:gaugefixing}
As in the proof of Proposition~\ref{prop:deformationdisk} (and also Proposition~\ref{prop:deformationcritcatenoid}), the standard Implicit Function Theorem may be used on the space of embeddings that are invariant under certain symmetries to obtain the existence of a deformation result for equivariantly nondegenerate free boundary CMC hypersurfaces. Nevertheless, this gauge-fixing approach does not guarantee the \emph{non-existence} of other CMC deformations \emph{without such symmetries}, which is only achieved with Theorem~\ref{thm:main}.
\end{remark}

\begin{remark}
It was recently proved by Fraser and Schoen~\cite{fraser-schoen4}, without any symmetry assumptions, that a free boundary minimal $2$-disk in the constant curvature ball $B_\lambda^{n+1}$, $n\geq3$, must be totally geodesic.  
\end{remark}

\begin{proposition}\label{prop:foliationdisk}
Let $\Sigma_s=\Sigma_{(h(s),\lambda(s))}$, $s\in[-\delta,\delta]$, be a smooth $1$-parameter subfamily of the rotationally invariant family
 $\{\Sigma_{(h,\lambda)}\}$ given in Proposition~\ref{prop:deformationdisk}, with $h(0)=0$, $h'(0)\neq0$ and $\lambda(0)=0$.
Then, for $\varepsilon>0$ sufficiently small, $\{\Sigma_s\}_{s\in (-\varepsilon,\varepsilon)}$ determines a foliation of a neighborhood of the flat disk $\Sigma_0=\Sigma_{(0,0)}$ in $B^{n+1}$.
\end{proposition}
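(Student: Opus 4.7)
The plan is to invoke the argument from the proof of Proposition~\ref{prop:Lpsi=1}, after showing that the variational field of the family $\Sigma_s$ is nowhere vanishing on $\Sigma_0=D^n$. By Proposition~\ref{prop:deformationdisk}, each $\Sigma_s$ is rotationally invariant, so we can write $\Sigma_s=\exp_{D^n}^{g_0}(\phi_s\,\vec n_{D^n})$ with $\phi_s\colon D^n\to\R$ radial and $\phi_0\equiv 0$. Set $\psi:=\frac{\dd}{\dd s}\big\vert_{s=0}\phi_s$; this is a radial function on $D^n$, and the task reduces to showing $\psi$ is nowhere zero.

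Differentiating the CMC constraint $H_{g_{\lambda(s)}}(\Sigma_s)=h(s)$ at $s=0$ yields
\[
J_0(\psi)+L\cdot\lambda'(0)=h'(0),
\]
where $J_0$ is the Jacobi operator of $D^n\subset(B^{n+1},g_0)$ and $L\colon T_0\Lambda\to C^{0,\alpha}(D^n)$ is the linearization of $\lambda\mapsto H_{g_\lambda}(D^n)$ at $\lambda=0$, as in \eqref{eq:linearizelambda}. The main step is to verify that $L\equiv 0$. For this, observe that the reflection $\sigma(x_1,\ldots,x_n,x_{n+1})=(x_1,\ldots,x_n,-x_{n+1})$ is an isometry of every $(B^{n+1},g_\lambda)$, since $g_\lambda=\dd r^2+\snsq(r)\,\dd s_n^2$ is invariant under all isometries of the round sphere. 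The fixed-point set of $\sigma$ is precisely $D^n$, so $D^n$ is totally geodesic in each $(B^{n+1},g_\lambda)$ and $H_{g_\lambda}(D^n)\equiv 0$, whence $L\equiv 0$.

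The equation thus reduces to $J_0(\psi)=h'(0)$. Since $D^n\subset(B^{n+1},g_0)$ is flat and totally geodesic, $J_0=\Delta$. By Lemma~\ref{lem:disknondeg}, $\ker J_0\cap C^{2,\alpha}_\partial(D^n)$ is spanned by the non-radial functions $x\mapsto\langle x,e_i\rangle$, so $J_0$ is injective on the radial subspace of $C^{2,\alpha}_\partial(D^n)$. A routine ODE computation then identifies the unique radial solution of $\Delta\psi=h'(0)$ with Robin boundary condition as $\psi(x)=h'(0)\cdot\frac{|x|^2+1}{2n}$, which has constant sign on $D^n$ since $h'(0)\neq 0$; in particular, $\psi$ is nowhere zero.

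With $\psi$ nonvanishing, the diffeomorphism argument in the proof of Proposition~\ref{prop:Lpsi=1} applies: for $\varepsilon>0$ small, $(p,s)\mapsto\exp_p^{g_0}\big(\phi_s(p)\,\vec n_{D^n}(p)\big)$ is a diffeomorphism from $D^n\times(-\varepsilon,\varepsilon)$ onto a neighborhood of $D^n$ in $B^{n+1}$ sending each slice $D^n\times\{s\}$ to $\Sigma_s$, which exhibits $\{\Sigma_s\}$ as the desired foliation. The main obstacle is the vanishing of $L$; once this is established via the symmetry argument above, everything else is a routine application of the equivariant nondegeneracy in Lemma~\ref{lem:disknondeg}.
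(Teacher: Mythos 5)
Your proof is correct and follows essentially the same route as the paper: parametrize $\Sigma_s$ radially over $D^n$, linearize the CMC equation at $s=0$, kill the $\lambda$-derivative term by observing $D^n$ is the fixed-point set of the reflection isometry of every $(B^{n+1},g_\lambda)$ (hence totally geodesic, with $H_{g_\lambda}(D^n)\equiv 0$), reduce to the radial Robin problem $\Delta\psi=\text{const}$ whose unique radial solution is a multiple of $\tfrac{1}{2n}(|x|^2+1)$, and conclude via the diffeomorphism argument of Proposition~\ref{prop:Lpsi=1}. Your linearized identity $J_0(\psi)+L\cdot\lambda'(0)=h'(0)$ is in fact the cleaner form of what the paper writes (which has an apparent typo), and the rest matches step for step.
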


\begin{proof}
We apply a slight generalization of the criterion in Section~\ref{sec:foliation} to verify that $\{\Sigma_s\}$ determines a foliation. Let $[-\delta,\delta]\ni s\mapsto\phi_s\in C^{2,\alpha}(\Sigma)$ be the smooth map such that $\Sigma_s=\exp(\phi_s\,\vec n_0)$, and set $\psi:=\frac{\dd}{\dd s}\big|_{s=0}\phi_s\in C^{2,\alpha}_\partial(D^n)$. Arguing as in Section~\ref{sec:foliation}, the conclusion will follow if we show that $\psi$ does not change sign on $D^n$.

Linearizing the equation $H_{\Sigma_s}=h(s)$  at $s = 0$, we find that $\psi$ is a solution to
\begin{equation}
h'(0)\, J_0\psi + \lambda'(0)\, L_0\psi = h'(0),
\end{equation}
where $L_0$ is the linear operator in \eqref{eq:linearizelambda}. Note that $\Sigma_{(0,\lambda)}$ is the fixed point set of the reflection about the hyperplane $\R^n=\{e_{n+1}\}^\perp$, and this involution is an isometry of $B^{n+1}_\lambda$ for all $\lambda$. Thus, $\Sigma_{(0,\lambda)}$ is totally geodesic (in particular, minimal) for all $\lambda$ and hence $L_0\psi=0$. Since $h'(0)\neq0$, we conclude that $\psi\in C^{k,\alpha}_\partial(D^n)$ is a solution to \eqref{eq:Lpsi=1}.
Similarly to \eqref{eq:robin}, it is easy to see that this is equivalent to
\begin{equation}\label{eq:Lpsi=1-disk}
\begin{cases}
\Delta\psi=1 & \text{in } B^{n+1},\\
\frac{\partial\psi}{\partial\vec n}=\psi & \text{on } \partial B^{n+1},
\end{cases}
\end{equation}
Since each $\Sigma_{(h(s),\lambda(s))}$ is invariant by rotations around the $x_{n+1}$-axis, then $\psi$ must be a radial function.

We claim that \eqref{eq:Lpsi=1-disk} admits a unique radial solution.
This follows easily from the fact that $D^n$ is nondegenerate as a rotationally invariant CMC free boundary hypersurface in $B^{n+1}$, see Remark~\ref{rem:noradialJacobifields}. Namely, the difference of any two radial solutions of \eqref{eq:Lpsi=1-disk} would be a radial Jacobi field along $D^n$. By inspection, one sees that $\tfrac{1}{2n}(|x|^2+1)$ is a radial solution of \eqref{eq:Lpsi=1-disk}, and therefore, by uniqueness, $\psi=\tfrac{1}{2n}(|x|^2+1)$. This function is positive on $D^n$, which concludes the proof.
\end{proof}

\begin{remark}
By Remark~\ref{rem:perturbfoliation}, the conclusion of Proposition~\ref{prop:foliationdisk} still holds when  the family $\{\Sigma_s\}$ does not contain surfaces that are invariant by rotations, but is \emph{$C^1$-close} to a family of hypersurfaces invariant under rotations around the $x_{n+1}$-axis.
\end{remark}
\begin{figure}[ht]
\centering
\vspace{-0.5cm}
\includegraphics[scale=0.5]{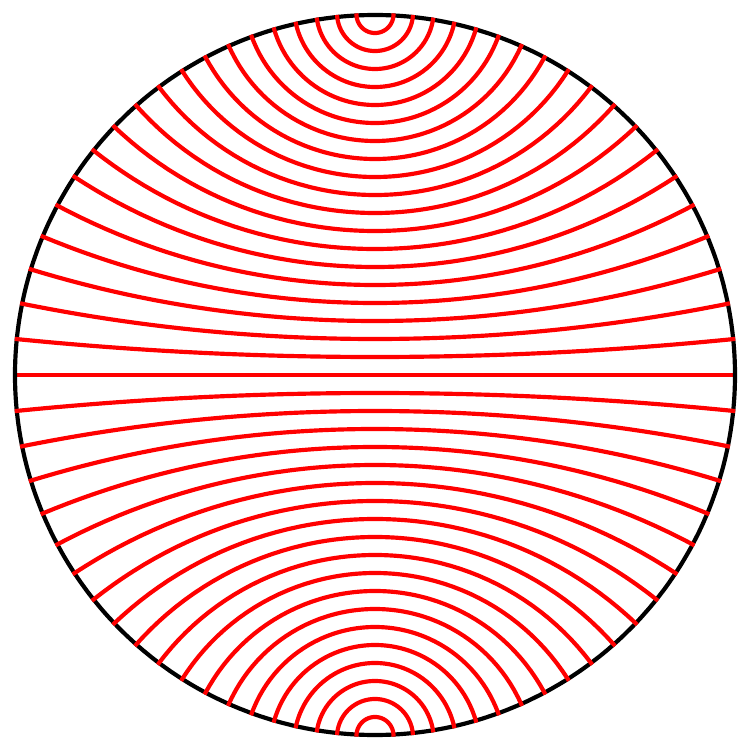}
\caption{Free boundary CMC spherical caps foliating the unit ball.}
\label{fig:sphcaps}
\end{figure}

\section{Free boundary Delaunay annuli in the unit ball}\label{sec:delaunay}

In this section, we apply our main result to prove that a well-studied free boundary minimal surface of the unit ball $B^3$ in Euclidean space, the critical catenoid, is a member of a $2$-parameter family of free boundary CMC surfaces inside the unit ball in a space form.
We then identify these surfaces in terms of Delaunay surfaces.

\subsection{Critical catenoid}
Recall that a \emph{catenoid} is a minimal surface in $\R^3$ obtained by rotating a catenary curve
\begin{equation}\label{eq:catenoid-generatrix}
x=c\cosh\frac{z}{c}, \quad c>0,
\end{equation}
about its directrix $x=0$. Besides affine planes, catenoids are the only minimal surfaces of revolution in $\R^3$. A simple argument shows that the portion of the catenoid generated by \eqref{eq:catenoid-generatrix} with $|z|\leq z_{c}$, where $z_c$ is the positive solution to
\begin{equation}
z_{c}=c\coth\frac{z_{c}}{c},
\end{equation}
is a normal minimal surface inside the ball of radius $r_c=\sqrt{z_{c}^2+c^2\cosh^2(z_{c}/c)}$ centered at the origin.
\begin{figure}[ht]
\centering
\includegraphics[scale=0.4]{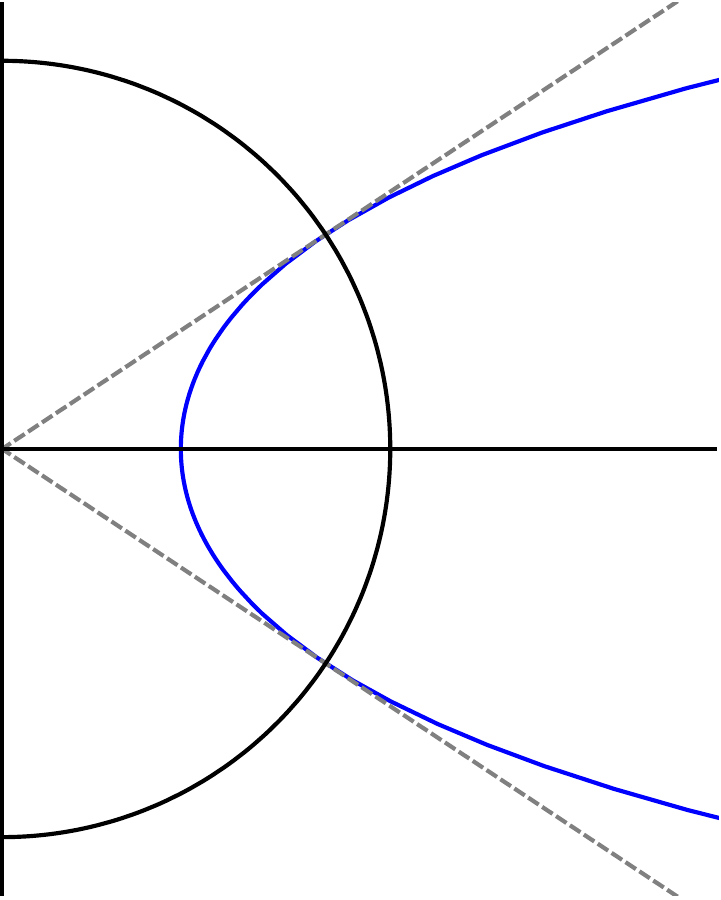}
\begin{pgfpicture}
\pgfputat{\pgfxy(0.2,3.05)}{\pgfbox[center,center]{$x$}}
\pgfputat{\pgfxy(-5,6.4)}{\pgfbox[center,center]{$z$}}
\pgfputat{\pgfxy(-2.1,3.25)}{\pgfbox[center,center]{$r_c$}}
\end{pgfpicture}
\caption{The catenary \eqref{eq:catenoid-generatrix} meets the circle of radius $r_c$ orthogonally.}
\label{fig:crit-catenoid}
\end{figure}
In particular, choosing $c>0$ such that $r_c=1$, the portion of the corresponding catenoid contained in the unit ball $B^3$ is a free boundary minimal surface, often referred to as the \emph{critical catenoid}.

\subsection{Equivariant nondegeneracy}
The key step to apply Theorem~\ref{thm:main} to deform the critical catenoid is to verify its equivariant nondegeneracy (see Definition~\ref{def:equivnondeg}).

\begin{lemma}\label{lemma:catenoidnondegenerate}
Let $\Sigma$ be a normal CMC surface of revolution  in $B^3$. If $\Sigma$ is symmetric with respect to a plane orthogonal to the rotation axis, then its nullity is either $2$ or $3$.
Furthermore, the critical catenoid admits no nontrivial rotationally symmetric Jacobi field satisfying \eqref{eq:linearizedfreebdy}, has nullity $2$ and is equivariantly nondegenerate.
\end{lemma}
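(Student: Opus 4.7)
The plan is to exploit the $\mathsf O(2)\times\Z_2$ ambient symmetry of $\Sigma$ — rotations about the axis of revolution together with the midplane reflection — to decouple the Jacobi boundary-value problem into a family of Sturm–Liouville problems on the profile-curve interval, one per angular Fourier mode, and then analyze each mode separately. Parameterize $\Sigma$ by arc-length $s\in[-s_c,s_c]$ along the profile (with $s=0$ on the midplane) and rotation angle $\theta$, so that the induced metric is $\dd s^2+r(s)^2\,\dd\theta^2$ with $r$ even in $s$. Fourier-expanding $\psi(s,\theta)=\sum_k\bigl[a_k(s)\cos k\theta+b_k(s)\sin k\theta\bigr]$ reduces $J\psi=0$ to
\begin{equation*}
-\tfrac{1}{r}(ra_k')'+\tfrac{k^2}{r^2}a_k-V\,a_k=0\text{ on }[-s_c,s_c],\quad V:=\|\sff^\Sigma\|^2+\Ric(\vec n_\Sigma,\vec n_\Sigma),
\end{equation*}
with Robin data $a_k'(s_c)=a_k(s_c)$ and $a_k'(-s_c)=-a_k(-s_c)$ (using $\sff^{S^2}(\vec n_\Sigma,\vec n_\Sigma)=-1$). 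The midplane reflection further splits each mode-$k$ problem into even and odd sectors, in each of which the two Robin conditions collapse to a single scalar condition at $s_c$. Writing $d_k$ for the dimension of the mode-$k$ solution space, the nullity is $d_0+\sum_{k\geq 1}2d_k$.

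For $k\geq 2$ I will show no nontrivial solutions exist. Multiplying the ODE by $r\,a_k$ and integrating by parts yields
\begin{equation*}
\int_{-s_c}^{s_c}\!r(a_k')^2\,\dd s+\int_{-s_c}^{s_c}\!\Bigl(\tfrac{k^2}{r}-rV\Bigr)a_k^2\,\dd s = r(s_c)a_k(s_c)^2+r(-s_c)a_k(-s_c)^2,
\end{equation*}
and a pointwise estimate $k^2/r^2>V$ (immediate on the critical catenoid, where $Vr^2=2/\cosh^2(z/c)\leq 2$) together with a trace-type inequality will force the left-hand side strictly to exceed the right-hand side unless $a_k\equiv 0$. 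For $k=1$, the Lie algebra $\mathfrak o(3)$ of $\Iso(B^3)$ contains the infinitesimal rotations $K_x,K_y$ about the axes transverse to the symmetry axis; these produce two linearly independent Killing–Jacobi fields $\sin\theta\cdot f(s)$ and $\cos\theta\cdot f(s)$. The identity $R_*K_{x,y}=-K_{x,y}$, combined with $R$ preserving $\vec n_\Sigma$, forces $f$ to be odd in $s$, saturating the odd mode-$1$ sector; the even mode-$1$ sector is killed by the same coercivity argument, giving a mode-$1$ contribution of exactly $2$. For $k=0$, a sector analysis (using Proposition~\ref{thm:integralidentities}(B) to rule out the odd sector when it would be $L^2$-orthogonal to the constant) yields $d_0\in\{0,1\}$. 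Summing, the nullity lies in $\{2,3\}$.

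For the critical catenoid I will make the mode-$0$ analysis explicit: the two-dimensional space of rotationally symmetric solutions to the Jacobi ODE is spanned by Jacobi fields coming from two $\R^3$-symmetries that preserve minimality of the catenoid — vertical translation gives $\psi_z=\langle e_z,\vec n\rangle=-\tanh(z/c)$ (odd in $s$), and the conformal dilation $X=x\partial_x+y\partial_y+z\partial_z$ gives $\phi=\langle X,\vec n\rangle=c-z\tanh(z/c)$ (even in $s$). Using the defining relation $u\tanh u=1$ (with $u=z_c/c$) of the critical catenoid, a short computation will show that both $\partial_s\psi_z-\psi_z$ and $\partial_s\phi-\phi$ are nonzero at $s=s_c$; by the parity decoupling no linear combination satisfies the Robin BC. Hence $d_0=0$, the nullity equals $2=\dim\operatorname{span}\{\psi_{K_x},\psi_{K_y}\}$, and the critical catenoid is equivariantly nondegenerate.

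The principal obstacle is the pointwise positivity estimate $k^2/r^2>V$ driving the coercivity argument for $k\geq 2$ and its mode-$1$ even-sector refinement. On the critical catenoid it is immediate, but for a general CMC surface of revolution inside $B^3$ one must combine $r\leq 1$ with a careful bound on $V=\|\sff\|^2$ obtained by exploiting the orthogonal meeting condition at $\partial B^3$.
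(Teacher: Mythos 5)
Your Fourier decomposition, parity splitting, Robin boundary conditions, and the explicit critical-catenoid computation (with $\nu_3 = \dot x = \tanh(z/c)$ from vertical translation and $q_\Sigma = x\dot z - z\dot x = c - z\tanh(z/c)$ the support function) all match the paper's proof. The difference — and the gap — is how you handle the angular modes $k\geq 2$ and the even $k=1$ sector. You propose a coercivity/trace inequality argument driven by the pointwise bound $k^2/r^2 > V$, and you yourself flag this as "the principal obstacle." That is indeed where your proof fails: even on the critical catenoid, the integration-by-parts identity
\[\int r(a_k')^2 + \int\Bigl(\tfrac{k^2}{r} - rV\Bigr)a_k^2 = r(s_c)a_k(s_c)^2 + r(-s_c)a_k(-s_c)^2\]
does not yield a contradiction from $k^2/r^2 > V$ alone, because the boundary terms on the right are not controlled by the Dirichlet energy without a quantitative trace inequality that depends on the specific profile $r$; and for a general normal CMC surface of revolution in $B^3$ (the first assertion of the lemma) you have no control on $V = \|\sff^\Sigma\|^2$ at all. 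Your $k=0$ appeal to Proposition~\ref{thm:integralidentities}(B) is also off: that flux identity concerns Killing fields, not arbitrary rotationally symmetric Jacobi fields, and $d_0\leq 1$ actually follows for free from the fact that the two-point boundary value problem has separated conditions on a two-dimensional ODE solution space.

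The ingredient you are missing is the oscillation-theoretic argument. The paper observes that the $k=1$ Killing--Jacobi fields $f_1, f_2$ (infinitesimal rotations about axes transverse to the symmetry axis) each have exactly two nodal domains, so their radial factor $S_1$ has no zeros on $[-\tfrac12 L_\gamma, \tfrac12 L_\gamma]$. Viewing the radial ODE as a Sturm--Liouville eigenvalue problem
\[-(xS')' - x\|\sff^\Sigma\|^2 S = \lambda\,\tfrac1x S,\quad \lambda = -n^2,\]
with the Robin boundary conditions, the nonvanishing of $S_1$ forces $\lambda_1 = -1$ to be the \emph{lowest} eigenvalue. Hence no eigenvalue $\lambda = -n^2$ with $n\geq 2$ exists, killing all modes $k\geq 2$ with no curvature estimate whatsoever, and the $\lambda = 0$ eigenspace (if any) is simple. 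This gives nullity $\in \{2,3\}$ cleanly and uniformly for the whole class of surfaces in the lemma, and reduces the critical-catenoid case to exactly the explicit mode-$0$ boundary-condition check you already set up. I would recommend replacing your coercivity plan with this nodal-domain/Sturm--Liouville comparison argument.
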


\begin{proof}
We may assume that the rotation axis is the line $x=y=0$, and that the orthogonal plane is the coordinate plane $z=0$. The space of Killing-Jacobi fields along $\Sigma$ is two-dimensional, and spanned by the functions
\begin{equation}\label{eq:killingjacobi-catenoid}
f_i:=\langle e_i\times X,\vec n_\Sigma\rangle, \quad i=1,2,
\end{equation}
where $e_1$ and $e_2$ denote respectively the constant fields in the direction of the $x$-axis and of the $y$-axis, and $X$ the position vector. In other words, $f_1$ and $f_2$ are infinitesimal rotations around the $x$-axis and $y$-axis, respectively. Note that each of these Killing-Jacobi fields:
\begin{enumerate}
\item\label{item:jacobiA} has exactly two nodal domains (see Figure~\ref{fig:2nodaldomains});
\item\label{item:jacobiB} is not rotationally invariant.
\end{enumerate}
\begin{figure}[ht]
\centering
\includegraphics[width=0.35\textwidth]{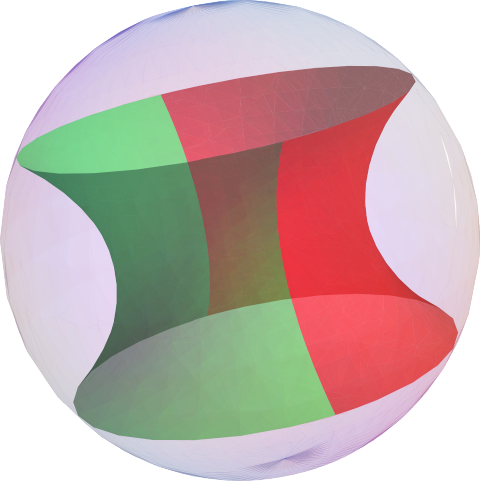}
\caption{Two nodal domains of the Jacobi fields $f_1$ and $f_2$.}
\label{fig:2nodaldomains}
\end{figure}
Let $\big[-\frac12L_\gamma,\frac12L_\gamma\big]\ni s\mapsto\gamma(s)=\big(x(s),z(s)\big)$ be the unit speed parametrization of the generatrix of $\Sigma$ in the $xz$-plane. By the symmetry assumptions, $x>0$ is an even function, and $z$ is odd. Parametric equations for $\Sigma$ are hence given by:
\begin{equation*}
\big(x(s)\cos\theta,\,x(s)\sin\theta,\,z(s)\big), \quad (s,\theta)\in\big[-\tfrac12 L_\gamma, \tfrac12 L_\gamma\big]\times\big[0,2\pi\big).
\end{equation*}
A unit normal $\vec n_\Sigma\colon\Sigma\to S^2$ is given by:
\begin{equation}\label{eq:Gaussmap}
\vec n_\Sigma(s,\theta)=\big(\dot z(s)\cos\theta,\dot z(s)\sin\theta,-\dot x(s)\big).
\end{equation}
The Laplacian $\Delta_\Sigma$ of the pull-back metric is easily seen to be the operator
\[\Delta_\Sigma=-\frac1x\,\frac\partial{\partial s}\left(x\,\frac\partial{\partial s}\right)-\frac1{x^2}\,\frac{\partial^2}{\partial\theta^2},\]
hence the Jacobi operator \eqref{eq:Jacobiop} of $\Sigma$ is given by:
\begin{equation}\label{eq:JacobiSigma}
J_\Sigma=-\frac1x\,\frac\partial{\partial s}\left(x\,\frac\partial{\partial s}\right)-\frac1{x^2}\,\frac{\partial^2}{\partial\theta^2}
-\big\Vert\sff^\Sigma\big\Vert^2.
\end{equation}
By rotational symmetry, $\big\Vert\sff^\Sigma\big\Vert^2$ depends only on $s$, and can be explicitly computed as the sum of the squares of the principal curvatures of $\Sigma$,
\[\Vert\sff^\Sigma\Vert^2=\Vert\ddot\gamma\Vert^2+\frac{\dot z^2}{x^2}=(\dot x\ddot z-\ddot x\dot z)^2+\frac{\dot z^2}{x^2}.\]
The linearized free boundary conditions \eqref{eq:linearizedfreebdy} for a Jacobi field $\psi$ read:
\[\frac{\partial\psi}{\partial s}\big(-\tfrac12L_\gamma,\theta\big)+\psi\big(-\tfrac12L_\gamma,\theta\big)=0,\quad
\frac{\partial\psi}{\partial s}\big(\tfrac12L_\gamma,\theta\big)-\psi\big(\tfrac12L_\gamma,\theta\big)=0.\]
Separation of variables $\psi=S(s)\,\Theta(\theta)$ for the above problem gives the following pair of linear ODEs on $S$ and $\Theta$:
\begin{eqnarray}
-(xS')'+\left(\frac\kappa x-x\big\Vert\sff^\Sigma\big\Vert^2\right)S=0&\label{eq:eqforS} \\
 \Theta''+\kappa\Theta=0& \label{eq:eqforTheta}
\end{eqnarray}
and boundary conditions
\begin{equation}\label{eq:BVforS}
 S'\big(-\tfrac12 L_\gamma\big)+S\big(-\tfrac12 L_\gamma\big)=0,\quad
 S'\big(\tfrac12 L_\gamma\big)-S\big(\tfrac12 L_\gamma\big)=0.
 \end{equation}

In \eqref{eq:eqforS} and \eqref{eq:eqforTheta}, $\kappa$ is an arbitrary real constant.
 Equation \eqref{eq:eqforTheta} admits periodic solutions only when $\kappa=n^2$, for some nonnegative integer $n$. Hence, a basis for $\ker J_\Sigma$ is given by functions of the form
 $S_n(s)\cdot\sin(n\theta)$ and $S_n(s)\cdot\cos(n\theta)$, where $n$ is some nonnegative integer for which
 there exists a nontrivial solution $S_n$ of the boundary value problem
 \begin{equation}\label{eq:eqforSn2}
-(xS')'+\left(\frac{n^2}x-x\Vert\sff^\Sigma\Vert^2\right)S=0,
\end{equation}
with boundary conditions \eqref{eq:BVforS}.

Using the properties \eqref{item:jacobiA} and \eqref{item:jacobiB} of the Killing-Jacobi fields $f_i$, it is easy to see that $f_1$ and $f_2$ correspond respectively to $S_1\sin\theta$ and $S_1\cos\theta$, where $S_1$ is the solution of \eqref{eq:eqforSn2} with $n=1$, satisfying \eqref{eq:BVforS}.  Since $f_1$ and $f_2$ have exactly two nodal domains, it follows that $S_1$ has no zero in $\left[-\frac12L_\gamma,\frac12L_\gamma\right]$.

Consider \eqref{eq:eqforSn2} as a Sturm-Liouville equation
\begin{equation}\label{eq:SturmLiouville}
-(xS')'-x\Vert\sff^\Sigma\Vert^2S=\lambda\cdot\tfrac1xS,
\end{equation}
with $\lambda=-n^2$, $n\in\mathds N$. When $n=1$, the solution of \eqref{eq:SturmLiouville}
satisfying \eqref{eq:BVforS} is $S_1$. Since it has no zero in $\left[-\frac12L_\gamma,\frac12L_\gamma\right]$, it follows from Sturm-Liouville theory that the first eigenvalue of \eqref{eq:SturmLiouville} must be $\lambda_1=-1$. In particular, there is no nontrivial solution of \eqref{eq:SturmLiouville} satisfying \eqref{eq:BVforS} when $\lambda=-n^2$, $n>1$.
Thus, $\ker J_\Sigma$ is spanned by $f_1$, $f_2$ and at most one more rotationally symmetric Jacobi field satisfying \eqref{eq:BVforS}, corresponding to a solution of \eqref{eq:SturmLiouville} when $\lambda=0$. This shows that the nullity of $\Sigma$ is either $2$ or $3$.

Let us show that there cannot be a solution of \eqref{eq:SturmLiouville} with $\lambda=0$ satisfying \eqref{eq:BVforS} when $\Sigma$ is the critical catenoid. We claim that any such solution $S_0$ must be either an even or an odd function of $s$. Note that $\widetilde S_0(s):=S_0(-s)$ also solves \eqref{eq:SturmLiouville} and \eqref{eq:BVforS}. As the space of solutions has dimension $\leq 1$, we have $\widetilde S_0=\alpha\, S_0$, for some $\alpha\in\R$. Iteration gives $\alpha^2=1$, hence $\alpha=\pm1$, proving the claim. Furthermore, disregarding boundary conditions, \eqref{eq:SturmLiouville} admits two linearly independent solutions; one even and one odd. The odd solution is the rotationally symmetric Jacobi field $\nu_3$ along $\Sigma$, given by the normal component of the Killing field $E_3$ of translations parallel to the $z$-axis. The even solution is given by the \emph{support function} $q_\Sigma\colon\Sigma\to\R$, $q_\Sigma=\langle X,\vec n_\Sigma\rangle$, where $X$ is the position vector. From \eqref{eq:Gaussmap}, we have that
\begin{equation}\label{eq:candidatesols}
\nu_3=\dot x \quad\text{and}\quad q_\Sigma=x\dot z-z\dot x.
\end{equation}
Note that $q_\Sigma$ is a Jacobi field only along minimal surfaces,\footnote{If $X\colon\Sigma\to\R^3$ is a surface in $\R^3$ with constant mean curvature $H_\Sigma$ and support function $q_\Sigma$, then $J_\Sigma q_\Sigma=-2H_\Sigma$. In fact, $q_\Sigma$ can be written as $\tfrac{\dd}{\dd t}\big\vert_{t=1}\langle X_t,\vec n_\Sigma\rangle$, where $X_t=tX$ is a CMC variation of $X$ with mean curvature $H(X_t)=\frac 1tH_\Sigma$.}
and it vanishes along $\partial\Sigma$, since $\Sigma$ is normal.
Thus, the space of odd solutions of \eqref{eq:SturmLiouville} has dimension $1$, as well as the space of even solutions of \eqref{eq:SturmLiouville}. This implies that $S_0$ must be a multiple of either $\nu_3$ or $q_\Sigma$, according to its parity. A direct computation shows that the unit speed parametrization $\gamma(s)=(x(s),z(s))$ of the catenary \eqref{eq:catenoid-generatrix} is
\begin{equation}\label{eq:xzcatenoid}
x(s)=c\sqrt{1+\frac{s^2}{c^2}}, \quad z(s)=c\log\left(\frac sc+\sqrt{1+\frac{s^2}{c^2}} \right),
\end{equation}
and $L_\gamma=2c\sinh(z_c/c)$. It is easy to verify that neither of \eqref{eq:candidatesols} satisfies the boundary conditions \eqref{eq:BVforS}. Thus, there is no nontrivial solution of \eqref{eq:SturmLiouville} with $\lambda=0$ satisfying \eqref{eq:BVforS}, hence the critical catenoid is equivariantly nondegenerate.
\end{proof}

\begin{remark}
The nullity of the critical catenoid and its equivariant nondegeneracy were also obtained by M\'aximo, Nunes, and Smith~\cite{MaxNunSmi13}. 
Very recently, the Jacobi operator \eqref{eq:JacobiSigma} of the critical catenoid was further studied (independently) by Devyver~\cite{devyver}, Smith and Zhou~\cite{smith-zhou}, and Tran~\cite{tran}, who established that its Morse index is equal to $4$ and recomputed its nullity (note \cite{devyver,tran} use a convention for the nullity different from ours).
\end{remark}

\subsection{Delaunay annuli}
Since the critical catenoid is equivariantly nondegenerate, it can be deformed through other CMC annuli (which we later identify as \emph{Delaunay annuli}) varying both the values of its mean curvature and of the ambient curvature.

\begin{proposition}\label{prop:deformationcritcatenoid}
The critical catenoid is the member $\Sigma_{(0,0)}$ of a smooth $2$-param\-eter family $\Sigma_{(h,\lambda)}$ of free boundary annuli with constant mean curvature $h$ inside the unit ball $B^3_\lambda$ of constant curvature $\lambda$, which is unique modulo congruence.
Each $\Sigma_{(h,\lambda)}$ is rotationally symmetric, and invariant with respect to reflection about a plane orthogonal to the rotation axis.
In particular, fixing the axis of rotational symmetry determines $\Sigma_{(h,\lambda)}$ uniquely.
\end{proposition}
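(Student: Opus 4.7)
The plan is to apply Theorem~\ref{thm:main} with parameter space $\Lambda=\R$ encoding the ambient curvature $\lambda$ of $B^3_\lambda$, starting at $\lambda_0=0$ and $h_0=0$, with $\Sigma_0$ equal to the critical catenoid. The two hypotheses to verify are already in hand: equivariant nondegeneracy of $\Sigma_0$ in $B^3_0$ is precisely the content of Lemma~\ref{lemma:catenoidnondegenerate}, and the smooth frame of Killing--Jacobi field generators is obtained by taking the two Killing fields of $B^3_0$ generating rotations about the $x$- and $y$-axes (whose normal components give \eqref{eq:killingjacobi-catenoid}) and extending each of them smoothly to a Killing field of $B^3_\lambda$ via Lemma~\ref{lemma:killingconstcurv}. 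Since $\Sigma_0$ is admissible and bounds a finite volume in $B^3_0$, Theorem~\ref{thm:main} delivers the smooth family $\Sigma_{(h,\lambda)}$ of free boundary hypersurfaces with constant mean curvature $h$ in $(B^3_\lambda,g_\lambda)$, diffeomorphic to the annulus $\Sigma_0$, with $\Sigma_{(0,0)}=\Sigma_0$, and unique modulo congruence.

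To establish the symmetries, I would follow the gauge-fixing strategy employed in the proof of Proposition~\ref{prop:deformationdisk} (see also Remark~\ref{rem:gaugefixing}). Namely, apply the standard Implicit Function Theorem in the restricted space of (unparametrized) admissible normal embeddings into $B^3_\lambda$ that are invariant under the subgroup generated by (a) rotations about a fixed axis, say the $z$-axis, and (b) reflection through the $xy$-plane. This subgroup is a subgroup of isometries of $B^3_\lambda$ for every $\lambda$, and the critical catenoid is invariant under it. The relevant linearization is the restriction of the Jacobi operator $J_{\Sigma_0}$ to functions $\psi(s,\theta)=S(s)$ on $\Sigma_0$ with $S$ even in the arc-length parameter $s$; by the last assertion of Lemma~\ref{lemma:catenoidnondegenerate}, there is no nontrivial rotationally symmetric Jacobi field along $\Sigma_0$ satisfying \eqref{eq:linearizedfreebdy}, so this restricted operator has trivial kernel, and since it is a Fredholm operator of index zero on its invariant subspace, it is an isomorphism. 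The Implicit Function Theorem then produces a smooth family $\Sigma'_{(h,\lambda)}$ of symmetric free boundary CMC annuli in $B^3_\lambda$, which by the uniqueness clause of Theorem~\ref{thm:main} must be congruent to $\Sigma_{(h,\lambda)}$. This transfers the symmetries of $\Sigma'_{(h,\lambda)}$ to $\Sigma_{(h,\lambda)}$.

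For the final uniqueness assertion, observe that once an axis of rotational symmetry is fixed, the residual group of ambient isometries of $B^3_\lambda$ commuting with the symmetry conditions consists of rotations about and reflections across planes containing that axis, all of which act trivially on any rotationally symmetric surface that is also symmetric under reflection across a plane orthogonal to the axis; hence congruence within this residual gauge collapses to equality. The main technical delicacy in executing this plan is to ensure that the Jacobi operator truly splits with respect to the $O(2)\times\Z_2$ decomposition so that Fredholmness of index zero is inherited by the even rotationally invariant sector; this follows from the explicit structure of $J_\Sigma$ in \eqref{eq:JacobiSigma} together with the separation of variables carried out in the proof of Lemma~\ref{lemma:catenoidnondegenerate}, which shows the operator preserves both the Fourier decomposition in $\theta$ and the parity decomposition in $s$.
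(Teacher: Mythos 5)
Your proposal is correct and follows essentially the same route as the paper: invoke Theorem~\ref{thm:main} using Lemma~\ref{lemma:catenoidnondegenerate} for equivariant nondegeneracy and Lemma~\ref{lemma:killingconstcurv} for the smooth extension of Killing fields, then establish the symmetries by a gauge-fixing application of the Implicit Function Theorem on the subspace of embeddings invariant under rotations about the $z$-axis and reflection through the $xy$-plane, using the absence of nontrivial rotationally symmetric (even) Jacobi fields from Lemma~\ref{lemma:catenoidnondegenerate} for nondegeneracy in the restricted problem. Your added remarks on the $\mathsf O(2)\times\Z_2$-equivariant splitting of the Jacobi operator and on the residual isometries acting trivially on the symmetric surfaces fill in details the paper leaves implicit, but do not change the argument.
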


\begin{proof}
From Lemma~\ref{lemma:catenoidnondegenerate}, the critical catenoid $\Sigma_0\subset B^3_0$ is equivariantly nondegenerate.
By Lemma~\ref{lemma:killingconstcurv}, every Killing field of $B^3_0$ belongs to a smooth $1$-parameter family of Killing fields of $B^3_\lambda$. Thus, the existence of $\Sigma_{(h,\lambda)}$ and its uniqueness modulo congruence follow from Theorem~\ref{thm:main}.
In order to prove that $\Sigma_{(h,\lambda)}$ have the claimed symmetries, one applies the Implicit Function Theorem on the space of (unparametrized) embeddings into $B^3_\lambda$ that are invariant under
rotations around the $z$-axis and reflections about the $xy$-plane. Indeed, the critical catenoid is nondegenerate in this space, as it does not admit nontrivial rotationally symmetric even Jacobi fields (see Lemma~\ref{lemma:catenoidnondegenerate}).
This yields the existence of a smooth family $\Sigma'_{(h,\lambda)}$ with these symmetries, which must be congruent to $\Sigma_{(h,\lambda)}$ by uniqueness.
\end{proof}

\begin{proposition}\label{prop:foliationcritcatenoid}
Fix a line $\ell$ through the origin in $\R^3$, and let $\Sigma_h$ be the free boundary CMC annulus $\Sigma_{(h,0)}$ in $B^3_0$  from Proposition~\ref{prop:deformationcritcatenoid} which is rotationally symmetric with respect to $\ell$. For $\varepsilon>0$ sufficiently small, $\{\Sigma_h\}_{h\in(-\varepsilon,\varepsilon)}$ determines a foliation of a neighborhood of the critical catenoid $\Sigma_0=\Sigma_{(0,0)}$.
\end{proposition}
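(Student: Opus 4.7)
The plan is to apply Proposition~\ref{prop:foliatedeqnondeg} and argue as in Proposition~\ref{prop:foliationdisk}: it suffices to exhibit a nowhere-vanishing solution $\psi\in C^{2,\alpha}_\partial(\Sigma_0)$ of $J_0\psi = 1$. The natural candidate is $\psi = \tfrac{\dd}{\dd h}\big\vert_{h=0}\phi_h$, where $\phi_h\in C^{2,\alpha}_\partial(\Sigma_0)$ is the normal graph function $\Sigma_h = \exp(\phi_h\,\vec n_0)$ of the family from Proposition~\ref{prop:deformationcritcatenoid}; linearizing $H_{\Sigma_h}\equiv h$ at $h=0$ yields $J_0\psi = 1$ via \eqref{eq:Jacobionfunctions}. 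By Proposition~\ref{prop:deformationcritcatenoid}, each $\Sigma_h$ is rotationally symmetric about $\ell$ and invariant under reflection in the perpendicular plane through the origin, so $\psi = S(s)$ depends only on the arc-length parameter $s$ along the generatrix and is even in $s$; Lemma~\ref{lemma:catenoidnondegenerate} (which excludes nontrivial rotationally symmetric Jacobi fields with the free boundary condition) then guarantees $S$ is the unique such solution.

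Next I would exhibit $S$ explicitly. Using the catenoid profile $x(s) = c\sqrt{1+s^2/c^2}$ and $\Vert\sff^{\Sigma_0}\Vert^2 = 2c^2/x^4$, the equation $J_0\psi = 1$ reduces to the non-homogeneous Sturm-Liouville ODE
\begin{equation*}
-(xS')' - \tfrac{2c^2}{x^3}\,S = x \qquad \text{on }\big[-\tfrac{L_\gamma}{2},\tfrac{L_\gamma}{2}\big],
\end{equation*}
subject to the Robin boundary conditions \eqref{eq:BVforS}. A direct substitution (using $x^2-s^2=c^2$) verifies that $S_p(s) := -x(s)^2/4$ is a particular solution, so the general even solution takes the form $S = S_p + A\,q_\Sigma$, with $q_\Sigma = c - zs/x$ the even rotationally symmetric Jacobi field from \eqref{eq:candidatesols}. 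The critical catenoid's orthogonality condition $zs = cx$ at $s = L_\gamma/2$, together with $x^2 + z^2 = 1$ at the boundary, yields $x(L_\gamma/2)^2 = L_\gamma/2$; imposing the Robin condition then fixes $A = -L_\gamma^2/(16c)$, producing the factorization
\[S(s) = -\tfrac{1}{16}\!\left(4\,x(s)^2 + \tfrac{L_\gamma^2}{c}\,q_\Sigma(s)\right).\]

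The main step, and the key observation, is to extract nonvanishing from this factorization. Here I would compute $(cx - zs)' = -z$, so that $cx - zs$ is nonincreasing on $[0,L_\gamma/2]$; combined with $(cx-zs)(0) = c^2 > 0$ and $(cx-zs)(L_\gamma/2) = 0$ (again from orthogonality), this gives $q_\Sigma = (cx-zs)/x \geq 0$ on $[0,L_\gamma/2]$, and by evenness on the full interval. Since also $x^2 > 0$, the bracketed expression is strictly positive, whence $S < 0$ throughout $[-L_\gamma/2,L_\gamma/2]$. With $\psi$ now seen to be nowhere vanishing, the proof concludes exactly as in Proposition~\ref{prop:foliatedeqnondeg}: the map $(p,h)\mapsto\exp_p(\phi_h(p)\,\vec n_0(p))$ is a local diffeomorphism onto a neighborhood of $\Sigma_0$ in $B^3$, whence $\{\Sigma_h\}_{h\in(-\varepsilon,\varepsilon)}$ foliates a neighborhood of the critical catenoid.
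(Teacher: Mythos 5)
Your proof is correct, and in one key step it is genuinely cleaner than the paper's. Both arguments reduce, as in Proposition~\ref{prop:foliatedeqnondeg}, to showing that the unique even rotationally symmetric solution $S$ of the Sturm--Liouville boundary value problem has no zeros. The paper does this by writing out the explicit two-parameter general solution \eqref{eq:ODEgeneralsol} (after rescaling to $c=1$), solving for the free constant $c_1$, estimating it numerically ($c_1\cong-0.152\in(-\tfrac14,0)$), and concluding by ``direct inspection.'' You instead observe that $S_p=-x^2/4$ is an explicit particular solution (the verification hinges precisely on the catenoid identity $x^2-s^2=c^2$), write the general even solution as $S_p+A\,q_\Sigma$ with $q_\Sigma$ the even Jacobi field from \eqref{eq:candidatesols}, and use the free-boundary identities $q_\Sigma(L_\gamma/2)=0$ and $x(L_\gamma/2)^2=L_\gamma/2$ to pin down $A=-L_\gamma^2/(16c)<0$ in closed form. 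The sign of $S$ then falls out of a short monotonicity argument: $(cx-zs)'=-z$ shows $cx-zs$ decreases from $c^2>0$ to $0$ on $[0,L_\gamma/2]$, hence $q_\Sigma\geq0$ on the full interval by evenness, and together with $x^2>0$ this forces $S<0$. I have checked the particular solution, the boundary conditions, the value of $A$, and the sign argument; they are all correct. Your approach buys two things: it eliminates the numerical estimate entirely, and by keeping $c$ general and working in the unit ball throughout it avoids the rescaling to $c=1$, where one must be careful that the Robin boundary condition is not scale-invariant in the arc-length parametrization (the radius of the ambient ball enters through $\sff^{\partial M}$). The paper's approach, on the other hand, is more direct if one is content with a numerical check and has the general solution formula in hand.
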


\begin{proof}
Without loss of generality we may assume that $\ell$ is the $z$-axis. From Proposition~\ref{prop:deformationcritcatenoid}, $\Sigma_h$ is invariant under reflections about the $xy$-plane. Let $V$ be the variational field along $\Sigma_0$ corresponding to the variation $\Sigma_h$, and set $\psi:=\langle V,\vec n_0\rangle$, where $\vec n_0$ is the unit normal along $\Sigma_0$. Clearly, $\psi$ is a solution to \eqref{eq:Lpsi=1}. From Proposition~\ref{prop:foliatedeqnondeg}, it suffices to show that $\psi$ does not have zeros in $\Sigma_0$.

By the symmetries of $\Sigma_h$, it is easily deduced that $\psi$ is obtained from an even solution $S_0$ to \eqref{eq:eqforSn2} with $n=0$, satisfying the boundary conditions \eqref{eq:BVforS}. In these equations, we consider the data \eqref{eq:xzcatenoid} corresponding to the critical catenoid $\Sigma_0$. Furthermore, up to rescaling, we may assume $c=1$. Direct computations show that solutions of this boundary value problem satisfy
\begin{equation}\label{eq:ODEcatenoid}
-S''-\frac{s}{1+s^2}S'-\frac{2}{(1+s^2)^2}S=1,
\end{equation}
with boundary conditions
\begin{equation}\label{eq:BVforScatenoid}
S'(-s_1)+S(-s_1)=0, \quad \text{ and }\quad S'(s_1)-S(s_1)=0,
\end{equation}
with $s_1:=\sinh z_1$, where $z_1$ is the positive solution to $z_1=\coth z_1$. The general solution to \eqref{eq:ODEcatenoid} satisfying $S(0)=c_1$ and $S'(0)=c_2$ is given by
\begin{equation}\label{eq:ODEgeneralsol}
S(s)=(c_1-\tfrac14s^2)+\frac{s}{\sqrt{1+s^2}}\big(c_2-(c_1+\tfrac14)\sinh^{-1}s\big).
\end{equation}
There is exactly one solution $S_0$ to \eqref{eq:ODEcatenoid} and \eqref{eq:BVforScatenoid}, which is an even function. In order to obtain $S_0$, we set $c_2:=0$ and
\begin{equation}
c_1:=-\frac{1}{4}\left(\frac{z_1^2+1}{z_1^2-1}-\frac{z_1^2}{\left(z_1^2-1\right)^{3/2}}\right)=-\frac14(\cosh 2z_1-\cosh^2 z_1\sinh z_1),
\end{equation}
so that the first condition in \eqref{eq:BVforScatenoid} is satisfied. This suffices to ensure that \eqref{eq:BVforScatenoid} is satisfied, since $S_0$ is even.
Straightforward numeric estimates give $c_1\in (-\tfrac14,0)$, more precisely, $c_1\cong-0.152$. Direct inspection of \eqref{eq:ODEgeneralsol} shows that $S_0(s)<0$ for $s\in [-s_1,s_1]$, hence $\psi$ does not have zeros on $\Sigma_0$, concluding the proof.
\end{proof}

Constant mean curvature surfaces of revolution in $\R^3$ are commonly known as \emph{Delaunay surfaces}, as they were classified by Delaunay~\cite{del} in 1841. Their generatrix was ingeniously observed to be the \emph{roulette} $\gamma$ of some conic section $\beta$, which is the curve traced by one of the foci of $\beta$ as it rolls without slipping along a line $\ell$. Parametrizing the line and the conic section as $\ell\colon\R\to\C$ and $\beta\colon\R\to\C$ respectively, with $\ell(0)=\beta(0)$, $\ell'(0)=\beta'(0)$, and $|\ell'(s)|=|\beta'(s)|=1$, the parametrization $\gamma\colon\R\to\C$ for the corresponding roulette is easily derived to be
\begin{equation}\label{eq:roulette}
\gamma(s)=\ell(s)-\frac{\ell'(s)}{\beta'(s)}\big(\beta(s)-\gamma(0)\big), \quad s\in\R,
\end{equation}
where the starting point $\gamma(0)\in\C$ is to be chosen as one of the foci of $\beta$.

Conic sections $\beta$ depend smoothly on two positive parameters, called \emph{eccentricity} $e$ and \emph{focal parameter} $p$. The conic is a circle, ellipse, parabola, or hyperbola respectively when $e=0$, $e\in (0,1)$, $e=1$, or $e\in(1,+\infty)$, while $p$ is the distance between one of the foci and the directrix, and serves as a rescaling parameter.
\begin{figure}[ht]
\centering
\includegraphics[scale=0.75]{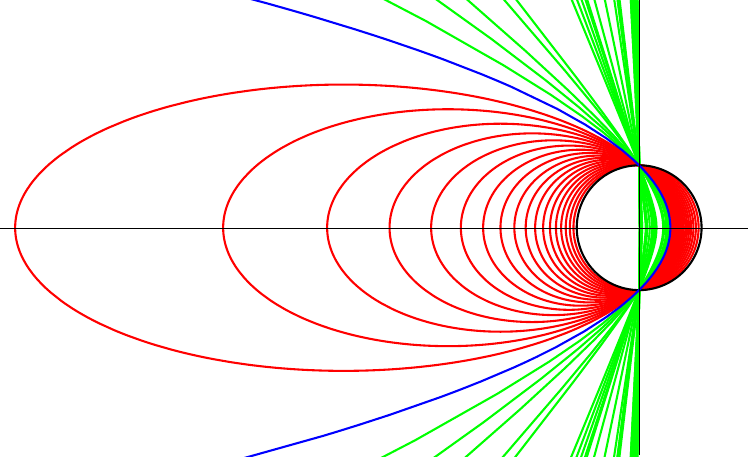}
\caption{The conics \eqref{eq:polarconics} with $e>0$ and $p=1$. The curves in red are ellipses ($0<e<1$), the curve in blue is a parabola ($e=1$) and the curves in green are hyperbolas ($e>1$).}
\label{fig:conics}
\end{figure}
In polar coordinates, a conic with eccentricity $e>0$, focal parameter $p>0$ and one of the foci at the origin is given by
\begin{equation}\label{eq:polarconics}
r(\theta)=\frac{e\,p}{1+e\cos\theta}.
\end{equation}
Reparametrizing $\widetilde\beta(\theta)=r(\theta)\cos\theta+i\,r(\theta)\sin\theta$ by arc length, one obtains the conic $\beta\colon\R\to\C$ and hence the corresponding roulette $\gamma$ via \eqref{eq:roulette}, using $\gamma(0)=0$ and $\ell(s)=\frac{e\,p}{1+e}+i\,s$, see Figure~\ref{fig:roulette}.
\begin{figure}[ht]
\centering
\includegraphics[scale=0.5]{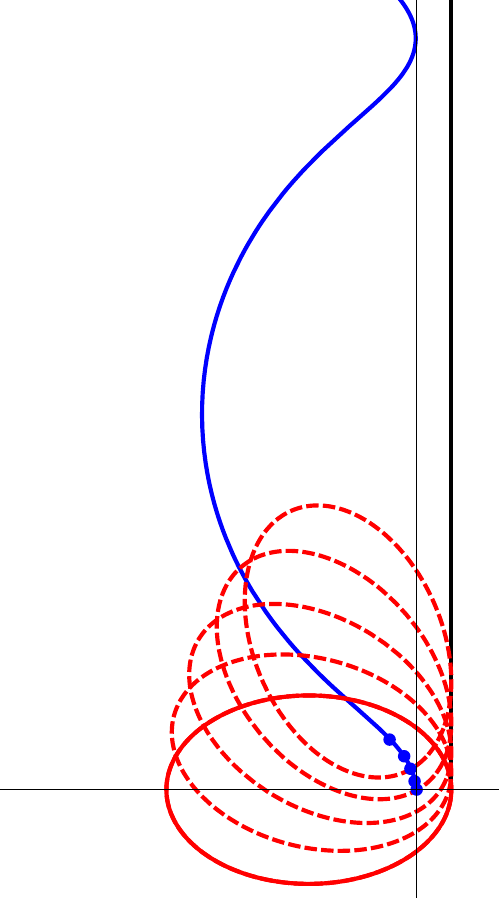}
\begin{pgfpicture}
\pgfputat{\pgfxy(-0.3,4)}{\pgfbox[center,center]{$\ell$}}
\pgfputat{\pgfxy(-2.4,4)}{\pgfbox[center,center]{$\gamma$}}
\pgfputat{\pgfxy(-0.5,0.5)}{\pgfbox[center,center]{$\beta$}}
\end{pgfpicture}
\hspace{2cm}\includegraphics[scale=0.5]{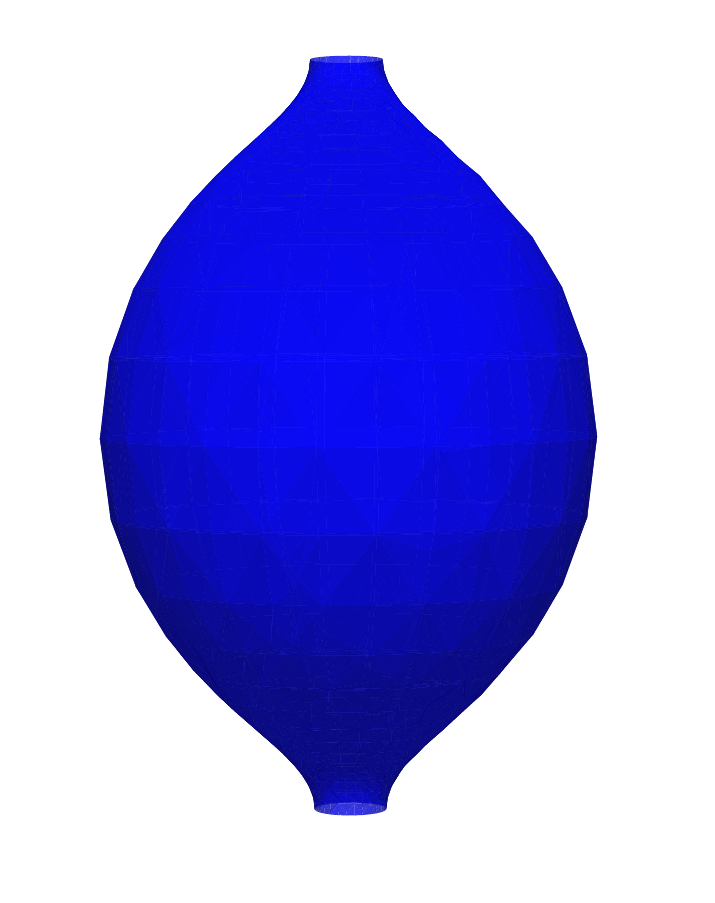}
\caption{Roulette $\gamma$ traced by one of the foci of the conic $\beta$ as it rolls along the line $\ell$, and the corresponding surface of revolution.}
\label{fig:roulette}
\end{figure}
Since $\beta$ and $\ell$ depend smoothly on $p$ and $e$, the roulette $\gamma$ given by \eqref{eq:roulette} and hence the corresponding Delaunay surface $\mathcal D$ in $\R^3$ also depends smoothly on these $2$ parameters.
Thus, Delaunay surfaces are members of a smooth $2$-parameter family $\mathcal D_{(e,p)}$ of CMC surfaces of revolution, obtained by revolving the roulette $\gamma$ of a conic with eccentricity $e$ and focal parameter $p$ around the line $\ell$.\footnote{An explicit parametrization of $\mathcal D_{(e,p)}$ involves elliptic integrals (needed to parametrize $\beta$ by arc length), and can be found, e.g., in \cite{bendito,del,Kenmotsu}.}
The mean curvature of $\mathcal D_{(e,p)}$ can be computed to be
\begin{equation}
H(e,p)=\frac{|e^2-1|}{e\,p}.
\end{equation}
The surface $\mathcal D_{(e,p)}$ is called an \emph{unduloid}, \emph{catenoid}, or \emph{nodoid}, according to the cases $0<e<1$, $e=1$, and $e>1$ respectively, that is, depending on the originating conic $\beta$ being an ellipse, parabola, or hyperbola, see Figure~\ref{fig:delaunay-surfaces}. The parameter $p>0$ corresponds to ambient homotheties, as the rescaled surface $\alpha\,\mathcal D_{(e,p)}$ is congruent to $\mathcal D_{(e,\alpha p)}$. Together with round cylinders and round spheres, which correspond to limits $e=0$ and $e=+\infty$, these are all the CMC surfaces of revolution in $\R^3$.
\begin{figure}[ht]
\centering
\includegraphics[scale=0.42]{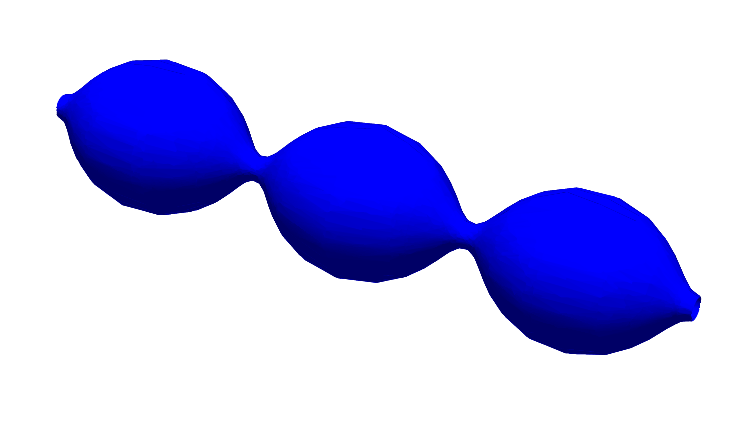}
\includegraphics[scale=0.26]{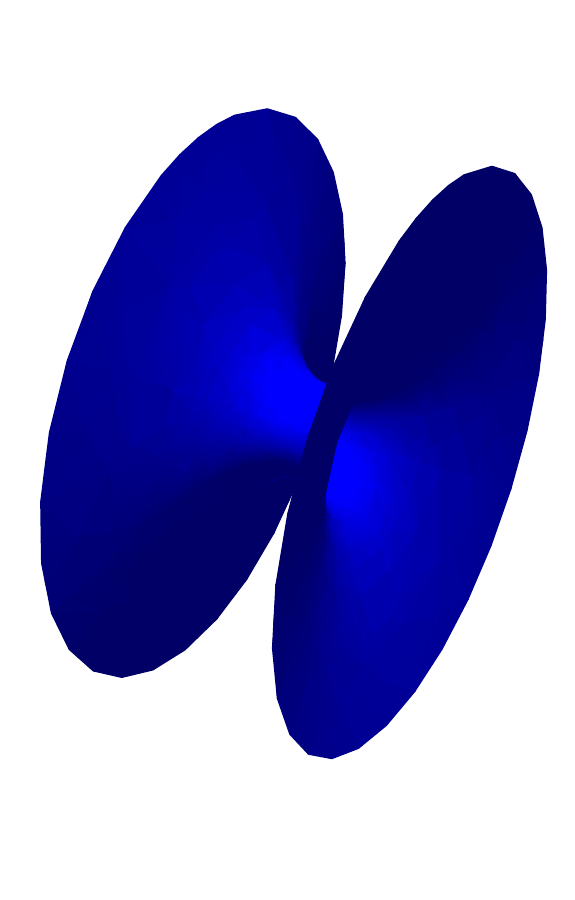}
\includegraphics[scale=0.35]{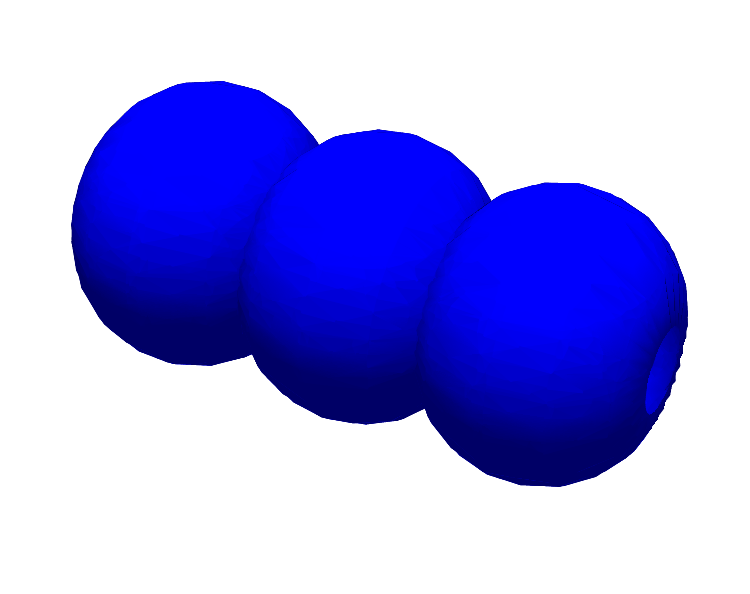}
\includegraphics[scale=0.42]{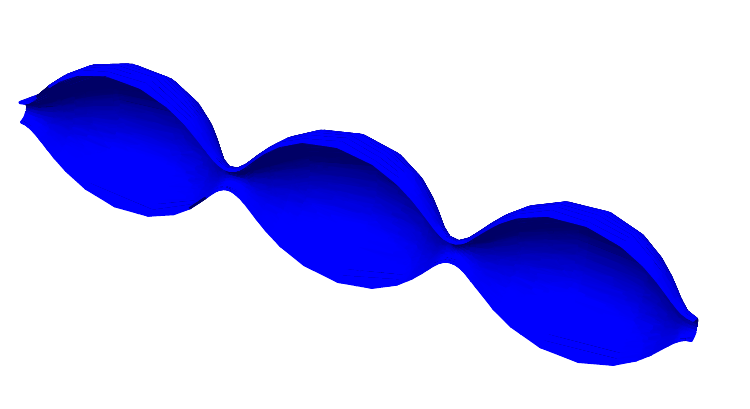}
\includegraphics[scale=0.26]{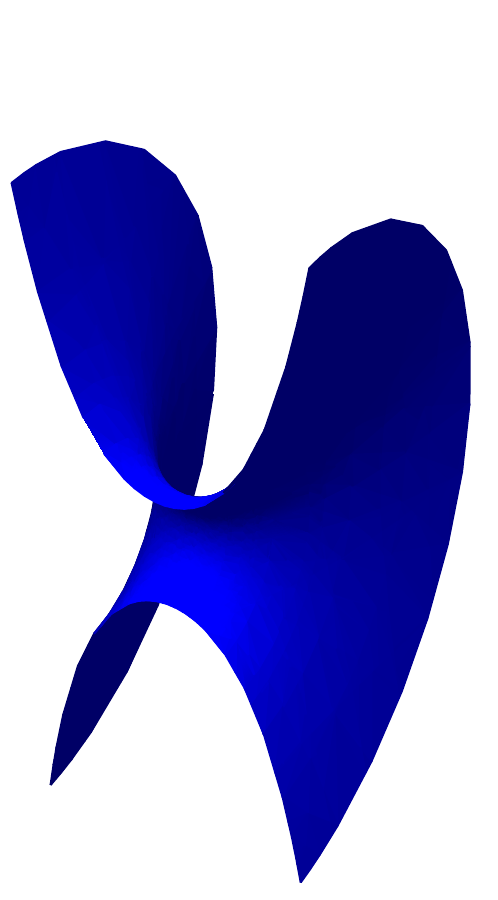}
\includegraphics[scale=0.35]{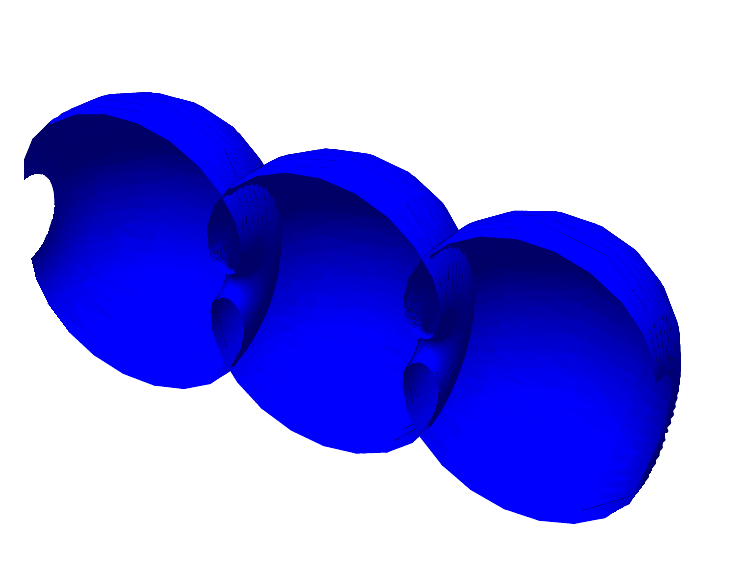}
\caption{Compact portions of an unduloid, a catenoid and a nodoid in $\R^3$, and the corresponding half sections.}
\label{fig:delaunay-surfaces}
\end{figure}

As above, up to ambient isometries, assume that the rotation axis $\ell$ of the Delaunay surface $\mathcal D_{(e,p)}$ is the $z$-axis, and that the roulette $\gamma$ is contained in the $xz$-plane. In this case, the portion of $\mathcal D_{(e,p)}$ contained in the ball $B^3(\varrho)\subset\R^3$ of radius $\varrho$ centered at the origin is a normal surface in $B^3(\varrho)$ if and only if the tangent line to $\gamma$ at the point $\gamma(s_*)$, for which $|\gamma(s_*)|=\varrho$, passes through the origin (cf.~Figure \ref{fig:crit-catenoid}). As $\gamma$ depends smoothly on the eccentricity $e$ and focal parameter $p$ of the corresponding rolling conic, the Implicit Function Theorem can be used to show that there exists a smooth function $\varrho(e,p)$, linear in $p$, such that $\mathcal D_{(e,p)}\cap B^3(\varrho(e,p))$ is a normal surface in $B^3(\varrho(e,p))$. Thus, Delaunay surfaces provide a smooth $1$-parameter family
\begin{equation}
\mathcal A_e:=\mathcal D_{(e,\rho(e,1))}\cap B^3(1)
\end{equation}
of free boundary annuli in the unit ball, with constant mean curvature $h(e):=H(e,\varrho(e,1))$, see also \cite{RosVergasta}. From the above discussion, the surface $\mathcal A_1$ is the critical catenoid. By the uniqueness modulo congruence in Proposition~\ref{prop:deformationcritcatenoid}, it follows that $\Sigma_{(h(e),0)}$ must be congruent to the Delaunay annulus $\mathcal A_e$. Notice that $\{\mathcal A_e\}_{e\in (0,+\infty)}$ form a foliation of $B^3$, cf.~Proposition~\ref{prop:foliationcritcatenoid}.

Delaunay surfaces also exist in the other space forms $\mathds S^3$ and $\mathds H^3$, and can be described as the rotationally symmetric CMC surfaces in such spaces, see \cite{hsiang2,HyndParkMcCuan}. Just as in the Euclidean case, these Delaunay surfaces induce free boundary CMC surfaces in any unit ball with constant curvature, see \cite{souam}. By a uniqueness modulo congruence argument similar to the above, the surfaces in the family $\Sigma_{(h,\lambda)}$ from Proposition~\ref{prop:deformationcritcatenoid} can be identified as Delaunay annuli in $\mathds H^3(1/\sqrt{-\lambda})$ or $\mathds S^3(1/\sqrt \lambda)$, according to $\lambda<0$ or $\lambda>0$.

\begin{remark}\label{rem:moretopology}
In Sections~\ref{sec:disks} and \ref{sec:delaunay}, we studied free boundary CMC disks and annuli in a unit ball of constant curvature as deformations of minimal surfaces in the Euclidean unit ball $B^3$. In principle, our main result (Theorem~\ref{thm:main}) can also be used to produce free boundary CMC surfaces with more complicated topology.

Possible starting points would be the free boundary minimal surfaces of genus~$0$ and any number $r$ of boundary components, recently produced by Fraser and Schoen~\cite{fraser-schoen3}, using extremal metrics for Steklov eigenvalue problems; and by Folha, Pacard, and Zolotareva~\cite{folha} for very large $r$, using gluing techniques. It is not clear whether all of these surfaces are (equivariantly) nondegenerate, but it is natural to expect that many of them are. Furthermore, in higher dimensions, if the new free boundary minimal hypersurfaces obtained by Freidin, Gulian, and McGrath~\cite{fgm} using equivariant methods are equivariantly nondegenerate, then it seems very likely that our deformation result recovers all the free boundary CMC hypersurfaces constructed by Cruz, Palmas, and Reyes~\cite{cruz-palmas-reyes} in the Euclidean unit ball, besides providing new examples of such CMC hypersurfaces in a unit ball with constant curvature.
\end{remark}

\appendix
\section{Proof of Lemma~\ref{thm:rightinverse}}\label{app:whitney}
Let $d,k>0$ be fixed integers.
Given a continuous function $h\colon \R^d \to\R$, the map $F_h\colon\R^{d+1}\to\R$ defined by
\begin{equation}\label{eq:defFg}
F_h(x_1,\ldots,x_d,z)=\frac1{z^{d-1}}\int_{Q(x_1,\ldots,x_d,z)}h(s_1,\ldots,s_d)\,\mathrm ds_1\ldots\mathrm ds_d,
\end{equation}
where $Q(x_1,\ldots,x_d,z)$ is the cube given by the cartesian product $\prod\limits_{i=1}^d\left[x_i-\frac z2,x_i+\frac z2\right]$, satisfies the following properties:
\begin{itemize}
\item it admits a continuous extension to the hyperplane $z=0$ by setting \[F_h(x_1,\ldots,x_d,0)=0;\]
\item it is of class $C^1$ on $\R^{d+1}$;
\item all its partial derivatives have the same regularity as $h$ (in particular, $F_h$ is of class $C^{k+1,\alpha}$ on $\R^{d+1}$ if $h$ is of class $C^{k,\alpha}$);
\item $\displaystyle\frac{\partial F_h}{\partial z}(x_1,\ldots,x_d,0)=h(x_1,\ldots,x_d)$ for all $(x_1,\ldots,x_d)$;\smallskip

\item it depends linearly on $h$.
\end{itemize}
\begin{remark}\label{thm:rempartder}
Using induction, one can show that if $h$ is of class $C^k$, for all $j\le k$,  the partial derivatives of
order $j$ of  $F_h$ are linear combinations of integrals of the partial derivatives of order up to $j-1$ of the function $h$, composed with linear functions in the variables.
\end{remark}
\begin{proposition}
Let $\Sigma$ be a compact Riemannian $n$-manifold with boundary $\partial\Sigma$, and let $\vec\nu$ denote the unit normal vector field along $\partial\Sigma$ pointing inward.\footnote{Observe that here we use the opposite convention for the orientation of the unit normal.} For all $k\ge0$ and $\alpha\in\left(0,1\right]$ there exists a bounded linear map $C^{k,\alpha}(\partial\Sigma)\ni g\mapsto\mathcal F_g\in C^{k+1,\alpha}(\Sigma)$ such that $\mathcal F_g\equiv0$ on $\partial\Sigma$ and $\vec\nu(\mathcal F_g)=g$ for all $g\in C^{k-1,\alpha}(\partial\Sigma)$.
\end{proposition}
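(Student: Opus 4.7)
The plan is to reduce the global statement to the local extension operator $F_h$ introduced in \eqref{eq:defFg}, via a tubular neighborhood of $\partial\Sigma$ and a partition of unity.

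First, I would fix a smooth diffeomorphism $\Psi\colon\partial\Sigma\times[0,\varepsilon)\to U$ onto an open collar neighborhood $U$ of $\partial\Sigma$ in $\Sigma$, with $\Psi(p,0)=p$ and $\tfrac{\partial}{\partial t}\Psi(p,t)\big|_{t=0}=\vec\nu(p)$ for all $p\in\partial\Sigma$. In these coordinates, $\partial\Sigma$ is identified with the slice $\{t=0\}$ and $\vec\nu$ with $\partial/\partial t$. Then I would take a finite atlas $\{(V_j,\varphi_j)\}_{j=1}^N$ on $\partial\Sigma$, with each $\varphi_j\colon V_j\to W_j\subset\mathds R^{n-1}$ a $C^\infty$-diffeomorphism onto an open subset, together with a $C^\infty$ partition of unity $\{\chi_j\}$ subordinate to $\{V_j\}$.

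Given $g\in C^{k,\alpha}(\partial\Sigma)$, for each $j$ I set $h_j:=(\chi_j g)\circ\varphi_j^{-1}$, extended by zero to a compactly supported element of $C^{k,\alpha}(\mathds R^{n-1})$. Applying \eqref{eq:defFg} with $d=n-1$ produces $F_{h_j}\in C^{k+1,\alpha}(\mathds R^{n-1}\times\mathds R)$ which vanishes on $\{z=0\}$ and satisfies $\tfrac{\partial F_{h_j}}{\partial z}(\cdot,0)=h_j$. Choosing a smooth cutoff $\eta\colon\mathds R\to[0,1]$ with $\eta\equiv 1$ near $0$ and $\mathrm{supp}\,\eta\subset[0,\varepsilon/2]$, I define
\[
\widetilde F_j(p,t):=\eta(t)\,F_{h_j}\big(\varphi_j(p),t\big),\qquad (p,t)\in V_j\times[0,\varepsilon).
\]
Since $h_j$ is supported in $W_j$ and $\eta$ has compact support in $[0,\varepsilon)$, the function $\widetilde F_j$ has compact support in $V_j\times[0,\varepsilon)$, so pushing forward by $\Psi$ and extending by zero on $\Sigma\setminus U$ yields an element of $C^{k+1,\alpha}(\Sigma)$. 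Finally I set $\mathcal F_g:=\sum_{j=1}^N\widetilde F_j\circ\Psi^{-1}$, the summands being understood as zero outside $U$.

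Linearity and continuity of $g\mapsto\mathcal F_g$ in the relevant H\"older norms are transparent from the construction, since each ingredient (multiplication by $\chi_j$, the smooth changes of coordinates $\varphi_j$ and $\Psi$, the operator $F_h$, and multiplication by the smooth cutoff $\eta$) is bounded and linear. The identity $\mathcal F_g\equiv 0$ on $\partial\Sigma$ follows directly from $F_{h_j}(\cdot,0)=0$, and $\vec\nu(\mathcal F_g)=g$ follows from $\eta(0)=1$, the identification of $\vec\nu$ with $\partial/\partial t$ along $\partial\Sigma$, and $\sum_j\chi_j\equiv 1$. The only genuinely analytic input — that the cube-averaging operator $F_h$ gains one degree of H\"older regularity, which in particular requires checking that the factor $z^{-(d-1)}$ combines with the average to produce a $C^{k+1,\alpha}$ function up to and including $z=0$ — has already been absorbed into the properties of $F_h$ and Remark~\ref{thm:rempartder} stated in the setup of this appendix. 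Hence the main difficulty is not analytic but organizational: arranging the partition of unity, cutoff, and coordinate changes so that the boundary behavior and regularity are all preserved simultaneously, which the construction above is designed to achieve.
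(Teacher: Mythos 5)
Your construction follows the same overall strategy as the paper's: reduce to the local operator $F_h$ of \eqref{eq:defFg} via charts and a partition of unity near $\partial\Sigma$, and glue by linearity. The organizational differences are minor: the paper uses boundary charts $\varphi_r\colon U_r\to\R^{n-1}\times[0,\infty)$ on $\Sigma$ and multiplies the \emph{extensions} $\mathcal F_r$ by a partition of unity on $\Sigma$ (so the support issue is handled automatically, with the $\vec\nu_p(\psi_r)\cdot\mathcal F_r(p)$ cross-terms vanishing because $\mathcal F_r\equiv0$ on $\partial\Sigma$), while you work in a collar, apply the partition of unity to $g$ on $\partial\Sigma$ before extending, and cut off in $t$ --- which is equally valid, but requires one extra observation you glossed over, namely that the $t$-support spread of $F_{h_j}$ (of order $t\sqrt{d}/2$ from the cube averaging) must stay inside $W_j$, so $\varepsilon$ must be taken small relative to $\mathrm{dist}(\varphi_j(\mathrm{supp}\,\chi_j),\partial W_j)$.
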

\begin{proof}
Choose a finite set of local charts $(U_r,\varphi_r)$, $r=1,\ldots, N$ on $\Sigma$ satisfying the following properties:
\begin{itemize}
\item[(a)] $U_r$ is a connected open subset of $\Sigma$, with $U_r\cap\partial\Sigma\ne\emptyset$ for all $r$;
\item[(b)] $U:=\bigcup_{r=1}^NU_r$ is an open neighborhood of $\partial\Sigma$;
\item[(c)] $\varphi_r$ is a diffeomorphism from $U_r$ onto $\mathds R^{n-1}\times\left[0,+\infty\right)$ carrying $\partial\Sigma\cap U_r$ onto $\mathds R^{n-1}\times\{0\}$;
\item[(d)] $(\mathrm d\varphi_r)_p(\vec\nu_p)=\frac\partial{\partial z}$ for all $p\in\partial\Sigma\cap U_r$.
\end{itemize}
Set $U_0=\Sigma\setminus\partial\Sigma$, so that $(U_r)_{r=0,\ldots,N}$ is an open cover of $\Sigma$, and
let $(\psi_r)_{r=0}^N$ be a smooth partition of unity subordinated to such cover. \smallskip

Given $g\in C^{k,\alpha}(\partial\Sigma)$,
for all $r$, consider the function $h_r=g\circ\varphi_r^{-1}\colon\R^{n-1}\to\R$, which is of class $C^{k,\alpha}$. Let $F_{h_r}\colon\R^{n}\to\R$ be the $C^{k+1,\alpha}$-extension of
$h_r$ defined in formula \eqref{eq:defFg}, and set $\mathcal F_r=F_{g_r}\circ\varphi_r$ be the corresponding $C^{k+1,\alpha}$-map on $U_r$. Finally, define a $C^{k+1,\alpha}$ function on $\Sigma$ as \[\mathcal F_g=\sum_{r=1}^N\psi_r\cdot\mathcal F_r.\]
Let us check that $\mathcal F_g$ has the desired properties. Clearly, $\mathcal F_g$ depends linearly on $g$. The continuity of $\mathcal F$ is obtained from the observation in Remark~\ref{thm:rempartder}. Since all the $F_r$ vanish on $\mathds R^{n-1}\times\{0\}$, by (c) all the $\mathcal F_r$ vanish on $\partial\Sigma$, hence $\mathcal F_g$ vanish on $\partial\Sigma$.
Since $\frac{\partial F_{g_r}}{\partial z}(x_1,\ldots,x_{n-1},0)=g_r(x_1,\ldots,x_{n-1})$, by (d) we get $\vec\nu_p(\mathcal F_r)=g(p)$ for all $p\in\partial\Sigma\cap U_r$. On the other hand, if $p\in\partial\Sigma\setminus U_r$, then $\psi_r(p)=0$. Hence, for $p\in\partial\Sigma$:
\[\vec\nu_p\big(\mathcal F_g\big)=\sum_{r=1}^N\big[\vec\nu_p(\psi_r)\cdot\mathcal F_r(p)+\psi_r(p)\vec\nu_p(\mathcal F_r)\big]=\sum_{r=1}^N\psi_r(p)g(p)=g(p).\]
This concludes the proof.
\end{proof}

\end{document}